\documentclass[a4paper, 12pt]{article}

\usepackage{amsfonts}
\usepackage {amssymb}
\usepackage {amsmath}
\usepackage {amsthm}
\usepackage{graphicx}
\usepackage{multirow}
\usepackage{xypic}
\usepackage {amscd}
\usepackage{mathrsfs}
\usepackage[colorlinks, linkcolor=blue, anchorcolor=black, citecolor=red]{hyperref}
\usepackage{enumerate}
\usepackage{enumitem}
\usepackage{geometry}  
\usepackage{tikz-cd}
\usepackage[all]{xy}

\usepackage{titlesec}

\newcommand{\bC}{\mathbb{C}}

\newcommand{\bZ}{\mathbb{Z}}
\newcommand{\bR}{\mathbb{R}}

\newcommand{\cO}{\mathcal{O}}
\newcommand{\cK}{\mathcal{K}}

\newcommand{\cA}{\mathcal{A}}

\newcommand{\Vol}{\mathrm{Vol}}
\newcommand{\Ric}{\mathrm{Ric}}
\newcommand{\loc}{\mathrm{loc}}
\newcommand{\supp}{\mathrm{supp}}
\newcommand{\trdeg}{\mathrm{trdeg}}
\newcommand{\idd}{i\partial\bar\partial}

\newcommand{\gC}{g_{\mathbb{C}}}

\newtheorem{thm}{Theorem}[section]
\newtheorem{prop}[thm]{Proposition}
\newtheorem{defn}[thm]{Definition}
\newtheorem{dfnthm}[thm]{Definition and Theorem}

\newtheorem{rem}[thm]{Remark}

\newtheorem{ques}[thm]{Question}
\newtheorem{exmp}[thm]{Example}
\newtheorem{lem}[thm]{Lemma}

\newtheorem{defn-prop}[thm]{Definition-Proposition}
\newtheorem{con}{Construction}[section]

\titleformat{\part}
  {\Large\bfseries}
  {\partname\ \thepart}
  {1em}
  {}

\titlespacing*{\part}{0pt}{3ex}{2ex}

\begin{document}

\title{Revised Demailly's Affineness Criterion and Algebraization of Entire Grauert Tubes}
\author{Kyobeom Song}
\date{}

\maketitle

\begin{abstract}
We provide a partial answer to Burns' 1982 conjecture on the affineness of entire Grauert tubes: the complement of a codimension-one subset of an entire Grauert tube is affine. This result is obtained by establishing a generalized version of Demailly's criterion for affineness of Stein manifolds, which may be of independent interest.

\end{abstract}
  
\section{Introduction}
A complex manifold $X$ is called a Stein manifold if it satisfies one of the following equivalent conditions:
\begin{itemize}
\item The holomorphic function ring $\mathcal{O}(X)$ yields a proper holomorphic embedding $X \hookrightarrow \mathbb{C}^N$ for some $N$.
\item There exists a strictly plurisubharmonic exhaustion function $\rho$ on $X$, i.e., $dd^c \rho > 0$ and $\{\rho \leq c\}$ is compact for all $c \geq 0$.
\item Cartan's Theorems A and B hold, i.e.,
\begin{itemize}
\item[(A)] For any coherent sheaf $\mathcal{F}$ and any open set $U \subset X$, the $\mathcal{O}_X$-module $\mathcal{F}(U)$ is generated by global sections.
\item[(B)] $H^i(X,\mathcal{F}) = 0$ for all $i > 0$.
\end{itemize}
\end{itemize}

One may notice that Stein manifolds in complex geometry correspond to affine varieties in algebraic geometry. Indeed, the analytification of any smooth affine variety is a Stein manifold. However, since Stein manifolds are defined in terms of arbitrary holomorphic functions rather than regular functions, they admit a much broader and more flexible class of examples than affine varieties. For instance, any pseudoconvex open subset of $\mathbb{C}^N$, any domain of holomorphy, as well as any closed complex submanifold of $\mathbb{C}^N$, is Stein.

Another example illustrating the flexibility of Stein manifolds is given by Grauert tubes. For any compact real-analytic Riemannian manifold $(M,g)$ of dimension $n$, there exists $\varepsilon > 0$ such that $T^\varepsilon M := \{ v \in TM \mid \|v\|_g < \varepsilon \}$ admits a Stein manifold structure called a \emph{Grauert tube}, satisfying the following properties \cite{LS91, S91, GS91, BHH03}:
\begin{enumerate}
\item $\rho := \|v\|_g^2$ is strictly plurisubharmonic and the K\"ahler metric associated to $\idd\rho$ restricts to $g$ on $M$,
\item $(i\partial\bar{\partial} \sqrt{\rho})^n = 0$ (the Homogeneous Complex Monge--Amp\`ere equation, HCMA).
\end{enumerate}
In particular, \emph{any} real-analytic manifold admits a Stein manifold that deformation retracts onto it, showing that the class of Stein manifolds is as vast as that of arbitrary real-analytic manifolds. If $\varepsilon = \infty$, i.e., if such a Stein structure exists on the entire tangent bundle $TM$, we call $T^\infty M := TM$ an \emph{entire Grauert tube}, and the triple $(TM, M, \rho)$ a Monge--Amp\`ere model.

In 1982, Burns \cite{B82} proposed the following conjecture.

\medskip

\noindent
\textbf{Burns Conjecture.} Is an entire Grauert tube an affine variety?

\medskip


This question is also mentioned in Demailly's 1985 paper, where necessary and sufficient conditions for a Stein manifold to be affine are established \cite[Probl\`eme 9.3]{D85}.

The interest of this problem lies both in the statement itself and in its broader connection to complex geometry. Intrinsically, a Stein manifold $X$ with maximal-dimensional real center $M:=\{\rho=0\}$ can be highly flexible, as seen in the case of Grauert tubes. In contrast, affine varieties with the same property are much more rigid. There is no a priori reason that the algebraic property of affineness should be characterized purely by an analytic condition such as the Monge--Amp\`ere equation. If true, this would reveal a nontrivial link between algebraic and analytic geometry, in the spirit of GAGA.

Moreover, this question is closely related to the Good Complexification Conjecture proposed by Totaro \cite{T03}. A closed smooth real manifold $M$ is said to admit a \emph{good complexification} if there exists an affine algebraic variety $U$ defined over $\mathbb{R}$ such that $U(\mathbb{R}) = M$ and the inclusion $U(\mathbb{R}) \hookrightarrow U(\mathbb{C})$ is a homotopy equivalence.

\medskip

\noindent
\textbf{Good Complexification Conjecture.} A closed manifold admits a metric of nonnegative curvature if and only if it admits a good complexification.

\medskip

This conjecture implies several known conjectures on nonnegatively curved manifolds. Kulkarni \cite{K78} showed that if a manifold $M$ admits a good complexification, then $\chi(M) \ge 0$, in accordance with the Hopf sign conjecture. Totaro \cite{T03} further proved that stronger restrictions are imposed on the rational cohomology ring. Thus, good complexifications can be viewed as an attempt to understand the rigidity of nonnegatively curved manifolds through underlying algebraic structures.

It is known that if $(M,g)$ admits an entire Grauert tube $TM$, then it should be nonnegatively curved \cite{LS91}, and $TM$ is homotopy equivalent to $M$. Hence, as observed by Totaro \cite{T03}, the Burns conjecture can be interpreted as asking whether an entire Grauert tube provides a good complexification. It is also remarkable that most known nonnegatively curved manifolds admit entire Grauert tubes, which also provide good complexification \cite{A01}.

Over the past 44 years, this problem has been approached from various perspectives. In 2000, Aguilar and Burns \cite{AB00} claimed a proof of the conjecture; however, Totaro \cite{T03} pointed out that Proposition 2.1, which is central to their argument, is not generally accepted. Later, in 2015 (published in 2018), Burns and Leung \cite{BL18} proved that if the base manifold $(M,g)$ is Zoll, i.e., all geodesics are periodic with the same period, then its entire Grauert tube is affine. In a series of works in the 2010s, Zelditch \cite{Z11, Z17} showed that Laplacian eigenfunctions on $(M,g)$ admit analytic continuations to the entire Grauert tube, and that the $\limsup$ of their absolute values with respect to $\sqrt{\rho}$ is exponentially bounded in $\sqrt{\rho}$. This suggests the abundance of functions on the entire Grauert tube with controlled growth, analogous to regular functions on affine varieties. In addition, it has recently been shown that entire Grauert tubes satisfy several strong topological restrictions (e.g., rational ellipticity \cite{C24, N22}, zero topological entropy of geodesic flow \cite{S24}).

Among these, Aguilar and Burns \cite{AB00} introduced a crucial nontrivial idea that motivates our approach. They attempted to prove the conjecture using the following affineness criterion of Demailly \cite{D85}.

\begin{thm}[Demailly's Affineness Criterion]\label{DAC}
Let $X$ be a Stein manifold of dimension $n$ (possibly with finitely many singularities). The following are equivalent:
\begin{enumerate}
\item $X$ is biholomorphic to an affine variety.
\item There exists a strictly plurisubharmonic exhaustion function $\phi$ on $X$ such that, writing $\alpha := dd^c \phi$ and $\beta := dd^c e^{\phi}$, the following conditions hold:
\begin{enumerate}
\item $X$ has finite Monge--Amp\`ere volume, i.e., $\int_X \alpha^n < \infty$,
\item there exists $\psi \in L^1_\loc(X,\mathbb{R}) \cap C^0(X_{\mathrm{reg}},\mathbb{R})$ and constants $A,B>0$ such that
\[
\psi \le A\phi + B\qquad \text{and}\qquad\mathrm{Ric}(\beta) + \frac{1}{2} dd^c \psi \ge 0,
\]
\item $\dim H^{2k}(X;\mathbb{R}) < \infty$ for all $k \in \mathbb{Z}$.
\end{enumerate}
\end{enumerate}
\end{thm}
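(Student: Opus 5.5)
The easy direction is (1)$\Rightarrow$(2). Embed $X$ as a closed affine subvariety of $\mathbb{C}^N$ and take $\phi := \log(1+\|z\|^2)$ restricted to $X$. Then $\alpha = dd^c\phi$ is the pullback of the Fubini--Study form under $X \hookrightarrow \mathbb{C}^N \subset \mathbb{P}^N$, so $\int_X \alpha^n \le \deg \overline{X} < \infty$, which gives (a). Condition (c) holds because an affine variety has finite-dimensional cohomology; this follows from Andreotti--Frankel, or simply from finite triangulability of real algebraic sets. For (b), note that $e^{\phi} = 1+\|z\|^2$, so $\beta$ is the restriction of the flat metric. The Ricci form of an induced metric on a submanifold is obtained from the ambient flat curvature by Gauss' equation, and this only produces a nonpositive term $-\theta$. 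We need to dominate $\theta$ by $dd^c$ of a function of logarithmic growth. The natural choice is
\[
\psi := -\log \det\bigl(\text{Gram matrix of } \partial/\partial z_{i_1},\dots,\partial/\partial z_{i_n}\bigr) \quad\text{summed over projections},
\]
or more concretely $\psi = \log \sum_I |J_I|^2$, where the $J_I$ are the Jacobian minors of the $n$-dimensional coordinate projections. With this choice, $\mathrm{Ric}(\beta) = -dd^c\log\det(\beta)$ gives exactly $\mathrm{Ric}(\beta) + \tfrac12 dd^c\psi \ge 0$ (up to normalization), since $\log\sum|J_I|^2$ is plurisubharmonic. The required growth $\psi \le A\phi + B$ comes from the $J_I$ being rational functions of polynomial growth on $X$. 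At singular points $\psi$ has $-\infty$ poles, which is why only $L^1_{\mathrm{loc}}$ is required.

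The hard direction is (2)$\Rightarrow$(1). Here I would follow the scheme: build enough holomorphic functions of polynomial growth with respect to $e^{\phi}$ to embed $X$ into an affine variety. Let $\mathcal{P}_k := \{ f\in\mathcal{O}(X) : |f| \le C e^{k\phi}\}$, more precisely $L^2$ with weight $e^{-2k\phi}$ against $\beta^n$. The first step is to use H\"ormander--Demailly $L^2$ estimates for $\bar\partial$ on the complete K\"ahler manifold $(X,\beta)$ with weight $e^{-2k\phi - \psi}$. The curvature hypothesis $\mathrm{Ric}(\beta) + \tfrac12 dd^c\psi \ge 0$ together with $dd^c\phi>0$ gives the positivity of the twisted curvature needed to solve $\bar\partial u = v$ with controlled weights. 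From this one gets that the $\mathcal{P}_k$ separate points and give local coordinates, for $k$ large, on any given compact set. The second step is to show $\dim \mathcal{P}_k \le C k^n$ via a Bishop/Lelong-type comparison. This is where $\int_X\alpha^n<\infty$ enters: a Schwarz-lemma/Jensen argument bounds the number of independent functions of degree $\le k$ by $\int \alpha^n \cdot k^n$. It follows that $\mathcal{R} := \bigcup_k \mathcal{P}_k$ is a graded-filtered algebra of transcendence degree $n$, finitely generated after a normalization argument. Then $\Phi : X \to \mathrm{Spec}\,\mathcal{R} =: Y$ is holomorphic, and by the first step it is an open embedding onto its image.

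The main obstacle will be proving that $\Phi$ is surjective, i.e.\ that $X$ does not miss a divisor of $Y$, and that $\Phi$ is proper. I would first try to show that any point of $Y\setminus\Phi(X)$ would force a function in $\mathcal{R}$ bounded on $X$ but with a pole there. Finite volume together with the $L^2$ estimate rules out such functions beyond constants, via a Liouville property for functions of zero growth. The topological condition (c) would then be used to control the boundary divisor: finiteness of $H^{2k}$ excludes $X$ being $Y$ minus an infinite configuration, and allows one to conclude that $Y\setminus\Phi(X)$ is empty after possibly passing to the normalization. This final step combining (c) with the algebraic structure is where I expect most of the difficulty.
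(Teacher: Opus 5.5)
There are genuine gaps in both directions. (The paper itself only quotes this theorem from \cite{D85}; its own proof of (2)$\Rightarrow$(1) is Theorem~\ref{main}(iv).)

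In (1)$\Rightarrow$(2), your $\psi$ is not well defined, and the obvious repairs fail. The minors $J_I=\det(\partial z_I/\partial w)$ depend on the local chart $w$. By Cauchy--Binet, $\sum_I|J_I|^2$ is, up to a constant, just $\det\beta$ in that chart, so $\log\sum_I|J_I|^2$ is not a function on $X$. Its coordinate-free counterpart $\sum_I\|dz_I|_X\|_\beta^2$ is constant, hence useless. You could make the $J_I$ into genuine rational functions by dividing by a reference form, $J_I=dz_I/dz_{I_0}$. Then $\psi=-\log\|dz_{I_0}|_X\|_\beta^2$ is $+\infty$ on the ramification divisor $R$ of the projection $\pi_{I_0}|_X$, so $\psi\le A\phi+B$ fails. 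Moreover $\Ric(\beta)+\frac12dd^c\psi$ becomes a negative multiple of $[R]$, which is the wrong sign. Rational functions do not have polynomial growth near their poles.

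The missing idea is to clear those poles using affineness. Take finitely many algebraic sections $s_j$ of $K_{X_{\rm reg}}^{-1}$ with no common zero on $X_{\rm reg}$ and with every pairing $\langle s_j,dz_I|_X\rangle$ regular on $X$. For example, take $s_j=q_j\,(dz_{I_j}|_X)^{-1}$ with $q_j$ regular and vanishing to high order along the ramification divisor of $\pi_{I_j}$ and at $X_{\rm sing}$. Then set $\psi=\log\sum_j\|s_j\|_\beta^2=\log\sum_{j,I}|\langle s_j,dz_I|_X\rangle|^2+\mathrm{const}$. Locally $\psi=\log\det\beta_w+\log\sum_j|f_j|^2$ with $f_j$ holomorphic, which gives $\Ric(\beta)+\frac12dd^c\psi\ge0$, and the growth bound and continuity on $X_{\rm reg}$ are now automatic.

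In (2)$\Rightarrow$(1), your $L^2$ step is sound (it is Proposition~\ref{jet}), but the rest does not go through.

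\emph{Finite generation.} The bound $\dim\mathcal{P}_k\le Ck^n$ only shows that the \emph{fraction field} $\cK_\phi(X)$ has transcendence degree $n$ and is finitely generated (Theorem~\ref{trdeg}). A subalgebra of a finitely generated field need not be finitely generated, and normalization cannot create finite generation. So $Y=\mathrm{Spec}\,\mathcal{R}$ is not known to be a variety; finite generation of the $\phi$-polynomials is essentially the conclusion, not an available step.

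\emph{Surjectivity.} Your argument here is incoherent. Elements of $\mathcal{R}$ are regular on $\mathrm{Spec}\,\mathcal{R}$, so none has a pole at a point of $Y$. Condition (c) cannot rule out a missing divisor either: $\mathbb{C}^*\subset\mathbb{C}$ has finite cohomology.

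\emph{What the paper does instead.} It avoids finite generation altogether.
\begin{itemize}
\item Finitely many $\phi$-polynomials $F=(b_p,f_1,\dots,f_N)$ map $X\setminus b_p^{-1}(0)$ into a normal $n$-dimensional $\cA$. Here $b_p$ kills the non-\'etale locus, and injectivity is forced because each new separating function raises a field degree that is bounded since $\cK_\phi(X)$ is of finite type.
\item The complement of the image is shown to be algebraic. This uses the psh function $\tau=V-\phi\circ\check F^{-1}$ built from Theorem~\ref{mingrowth}, which is $-\infty$ exactly off the image, and the rational $(1,0)$-form $h=\partial\tau-\frac{i}{2}u$, whose pole divisor is that complement.
\item Steinness together with Hartogs makes the complement a hypersurface.
\item Density of $\phi$-polynomials in $\cO(X)$ (Proposition~\ref{dense}, a second $L^2$ argument using $\psi$) gives rational convexity, and Mok's Theorem~\ref{Mok} then gives affineness.
\item Condition (c) is used only to show that finitely many charts $X\setminus b_{p_i}^{-1}(0)$ cover $X$, by applying Lemma~\ref{irred} to the descending chain $\bigcap_{i\le k}b_{p_i}^{-1}(0)$.
\end{itemize}
Three mechanisms are absent from your sketch: algebraicity of the boundary, rational convexity via density, and cohomological finiteness of the cover.
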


Aguilar and Burns proposed to construct the function $\psi$ in condition (b) using the $(n,0)$-part of the analytic continuation of the volume form of the base manifold $(M,g)$ to the entire Grauert tube. However, for this approach to work, the resulting $(n,0)$-form $V^{n,0}$ must be globally well-defined. As we explain in Section \ref{egtdisc}, even when such a form can be defined, there exist explicit examples of entire Grauert tubes for which $V^{n,0}$ has poles along a singular set $S$. In this case, the function $\psi$ constructed from $V^{n,0}$ takes the value $-\infty$ on $S$, and hence Theorem \ref{DAC} cannot be applied.

To overcome this difficulty, we first establish a revised version of Demailly's affineness criterion (Theorem~\ref{DAC}), which is of independent interest.

\begin{thm}[Revised Affineness Criterion]\label{main}
Let $X$ be an $n$-dimensional Stein manifold equipped with a strictly plurisubharmonic exhaustion $\phi$.
Assume that $X$ has finite Monge--Amp\`ere volume
\[
\Vol(X)\;:=\;\int_X (dd^c\phi)^n \;<\;\infty.
\]
\begin{enumerate}
\item[(i)] Assume that there exist constants $A,B>0$ and a function $\psi\in L^1_\loc(X,\mathbb{R})$ such that, for $\beta:= dd^c(e^\phi)$, in the sense of $(1,1)$-currents, one has
\[
\Ric(\beta)+\frac{1}{2}dd^c\psi \;\ge\; 0 \qquad \text{and} \qquad \psi~\le~A\phi+B
\]
Then, writing $S:=\{x~|~e^{-2\psi}\notin L^1_\loc(x)\}$, for any $p\in X\setminus S$, there exists $b_p\in\mathcal{O}(X)$ with
$b_p(p)=1$ and $b_p|_S=0$ such that $X\setminus b_p^{-1}(0)$ is biholomorphic to a smooth affine variety that has $1/b_p$ as a regular function.

Furthermore, for any finitely many points $p_i\in X\setminus S$, there exists a normal affine variety $\cA$ such that each $X\setminus b_{p_i}^{-1}(0)$ embeds as a Zariski open subset of $\cA$.

\item[(ii)] If $\dim H^{2k}(X\setminus S;\mathbb{R})<\infty$ for all $k\in\mathbb{Z}$, then $X\setminus S$ is biholomorphic to a quasi-affine variety, i.e., a Zariski open set of an affine variety.

\item[(iii)] If $\dim H^{2k}(X\setminus S;\mathbb{R})<\infty$ for all $k\in\mathbb{Z}$ and $X\setminus S$ is a Stein manifold, then there exists an irreducible normal affine variety $\cA$ and its algebraic hypersurface $H$ such that $X\setminus S$ is biholomorphic to $\cA\setminus H$.

\item[(iv)] If $S=\emptyset$ and $\dim H^{2k}(X;\mathbb{R})<\infty$ for all $k\in\mathbb{Z}$, then $X$ is biholomorphic to an affine variety.
\end{enumerate}
\end{thm}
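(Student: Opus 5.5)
We follow Demailly's original proof of Theorem~\ref{DAC} and track where the singular weight $e^{-2\psi}$ forces us to discard the non-integrability locus $S$. The first step is to build the algebra of functions of polynomial growth,
\[
\cA_\phi := \{ f\in\cO(X) : |f|\le C e^{k\phi} \text{ for some } C,k\},
\]
and to show it is finitely generated with transcendence degree $n$. Finite Monge--Amp\`ere volume together with a Bishop-type estimate bounds $\dim\{f\in\cA_\phi: |f|\le Ce^{k\phi}\}$ by $O(k^n)$. This gives $\trdeg\cA_\phi\le n$, and every element of $\cA_\phi$ is algebraic over any $n$ algebraically independent elements.

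Next we produce many elements of $\cA_\phi$ using H\"ormander $L^2$ estimates for $\bar\partial$ on the Stein manifold $X$. We take the K\"ahler metric $\beta$ and the weight $e^{-2k\phi-2\psi}$, and read the curvature hypothesis as
\[
\Ric(\beta)+\tfrac12 dd^c\psi\ge 0 .
\]
Adding $k\,dd^c\phi$ (and a term $n\log|z-p|$ to create singularities) gives positivity. Cutting off local data near $p\in X\setminus S$ and solving $\bar\partial u=v$, we obtain $f\in\cO(X)$ with
\[
\int |f|^2e^{-2k\phi-2\psi}dV_\beta<\infty
\]
and prescribed jets at $p$. The condition $p\notin S$ is exactly what makes the local data integrable at $p$. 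Conversely, at each $x\in S$ we have $e^{-2\psi}\notin L^1_\loc(x)$, so any such $f$ must vanish at $x$; this gives $f|_S=0$.

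Since $\psi\le A\phi+B$, mean-value inequalities convert the weighted $L^2$ bounds into sup bounds of the form $Ce^{k'\phi}$, so each $f$ lies in $\cA_\phi$. Taking the $0$-jet equal to $1$ yields $b_p$ with $b_p(p)=1$ and $b_p|_S=0$. On $X_{b_p}:=X\setminus b_p^{-1}(0)$ the weight $|b_p|^{2m}e^{-2\psi}$ is locally integrable for $m$ large. This should be checked via Skoda/Nadel: $S$ is the zero set of the multiplier ideal $\mathcal{J}(\psi)$, and the sections $f$ generate $\mathcal{J}$ on compacts. As a result, the $L^2$ construction separates points and gives local coordinates everywhere on $X_{b_p}$.

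\textbf{Main obstacle.} The hard step is showing that the map
\[
X_{b_p}\to \operatorname{Spec}\cA_\phi[1/b_p]
\]
is a biholomorphism onto a smooth affine variety, and in particular that it is proper and injective. Here we would copy Demailly's argument. First choose finitely many generators of $\cA_\phi$, which exist by finite generation. These give a map $F$ to $\bC^N$ that is finite onto its image, because each generator is integral over the $n$ independent elements. Properness in $\bC^N\times\bC$ (with the extra coordinate $1/b_p$) follows from exhaustion estimates: the functions of polynomial growth control $\phi$ from below, using $\trdeg = n$ together with the volume bound. Injectivity and immersivity on $X_{b_p}$ come from the separation just established. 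The normal variety $\cA$ for finitely many $p_i$ is then the normalization of $\operatorname{Spec}$ of the finitely generated algebra $\cA_\phi$. Each $X_{b_{p_i}}$ is the principal open set $D(b_{p_i})$ inside it.

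For (ii), $X\setminus S$ is covered by the $X_{b_p}$. The cohomology hypothesis, used as in Demailly via Hodge/Chern class finiteness, lets finitely many $b_{p_i}$ suffice, since the common zero locus stabilizes by Noetherianity of $\cA_\phi$. This exhibits $X\setminus S$ as a union of $D(b_{p_i})$, which is Zariski open in $\cA$. For (iii), if $X\setminus S$ is Stein, then its complement in the normal affine variety $\cA$ has pure codimension one: a Stein open set in a normal variety has codimension-one complement by Hartogs extension. So this complement is a hypersurface $H$. For (iv), $S=\emptyset$, so the $b_p$ have no common zeros and $X=\cA$, which recovers Theorem~\ref{DAC}.
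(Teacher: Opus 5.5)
\textbf{Finite generation and properness.} Your ``main obstacle'' step assumes that $\cA_\phi$ is a finitely generated (Noetherian) algebra whose generators are integral over $n$ independent elements. Neither is available.

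\begin{itemize}
\item Finite Monge--Amp\`ere volume (Demailly's Theorem~8.5, Theorem~\ref{trdeg} here) gives only $\trdeg\le n$ and finite generation of the \emph{fraction field} $\cK_\phi(X)$, not of the ring. Polynomial dimension growth does not give a finitely generated ring; compare $\bC[x,xy,xy^2,\dots]$.
\item Integrality fails already on $\bC^*$, where $1/z$ is not integral over $\bC[z]$.
\item If $\cA_\phi$ were Noetherian with $X_{b_p}\cong\operatorname{Spec}\cA_\phi[1/b_p]$, then finitely many $b_{p_i}$ would cut out $S$. Statement (ii) would then hold with no cohomological hypothesis, which the paper (following Demailly, Ch.~13) treats as essential. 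That is a strong sign this is not the mechanism.
\item ``Properness from exhaustion estimates'', i.e.\ $\phi\lesssim\log(1+|F|^2)$ on $X_{b_p}$, restates the conclusion rather than following from $\trdeg=n$ and finite volume.
\item In (iv), the absence of common zeros of the $b_p$ on $X$ says nothing about $\cA\setminus F(X)$, which can be a nonempty hypersurface.
\end{itemize}

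What the paper does instead:
\begin{enumerate}
\item It builds $F=(b_p,f_1,\dots,f_N)$ by alternately normalizing and adjoining separating functions (Proposition~\ref{isom}). This terminates because each adjunction at least doubles the degree over $\bC(f_1,\dots,f_n)$, and that degree is finite since $\cK_\phi(X)$ is finitely generated of transcendence degree $n$.
\item It proves that $F(X\setminus b_p^{-1}(0))$ is Zariski open with hypersurface complement. Take $\tau=V-\phi\circ\check F^{-1}$ from Theorem~\ref{mingrowth} and form the holomorphic form $h=\partial\tau-\frac i2 u$. Show $h$ is rational via a Siegel-type vanishing-order count and Lemma~\ref{rationalext}, identify its pole divisor with the complement, and finish with Hartogs.
\item It proves rational convexity from the density of $A^0_\phi(X)[1/b_p]$ in $\cO(X)$ on compacts of $X\setminus b_p^{-1}(0)$ (Proposition~\ref{dense}, a further $L^2$ argument with weight $\psi+\max(t(\phi-k),0)+2\log(1+e^\phi)$), together with Steinness of $X$.
\item It concludes with Mok's Theorem~\ref{Mok}.
\end{enumerate}
The same density/Mok step, now with $S'=\emptyset$, is what proves (iv).

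\textbf{The weight.} With the paper's convention $dd^c=2i\partial\bar\partial$, the hypothesis reads $\Ric(\beta)+\idd\psi\ge 0$. This licenses the weight $e^{-\psi}$, not $e^{-2\psi}$. For $e^{-2k\phi-2\psi}$ the curvature term is $2k\,\idd\phi+\idd\psi+(\Ric(\beta)+\idd\psi)$. The extra $\idd\psi$ is bounded below only by $-\Ric(\beta)$, which no multiple of $dd^c\phi$ absorbs in general. With $e^{-\psi}$ you can prescribe jets only off $S'=\{e^{-\psi}\notin L^1_\loc\}$, and you get forced vanishing only on $S'\subset S$. So your claims that ``$p\notin S$ is exactly what makes the data integrable'' and that ``$f|_S=0$'' are unjustified as stated.

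The paper's fix, in Proposition~\ref{jet}(b), is the weight $2\psi+\log|\bigwedge_{i=1}^n df_i|_\beta^2+\dots$. Since $\idd\log|\bigwedge df_i|_\beta^2=\Ric(\beta)+2\pi[Z]$, this supplies the missing Ricci term. The same factor forces $b_p$ to vanish on the critical locus $Z$ of $(f_1,\dots,f_n)$, which is exactly what makes $F$ \'etale on $X\setminus b_p^{-1}(0)$.

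Two smaller points:
\begin{itemize}
\item Pointwise bounds $|f|\le Ce^{k'\phi}$ obtained from weighted $L^2$ bounds via mean-value inequalities need uniform control of coordinate balls, which a general Stein manifold lacks. This is why the paper works in $A^0_\phi(X)$ and uses Lemma~\ref{weightphi}(3).
\item In (ii), finiteness does not come from Noetherianity. It comes from Lemma~\ref{irred} applied to $S_k\setminus S$ in the exact sequence of the pair, with $H^*(X\setminus S_k)$ finite-dimensional because $X\setminus S_k$ is already quasi-affine.
\end{itemize}
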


We then show that the metric $g$ on the base manifold $(M,g)$ extends meromorphically to a nondegenerate symmetric $(2,0)$-tensor $\gC$ on the entire Grauert tube $TM$. Using this, we prove that for the meromorphic nonvanishing section $\det(\gC)$ of $K_{TM}^2$, 
\[\phi:=\log(1+\cosh(\|v\|_g)),\qquad \beta:= dd^c (e^\phi),  \qquad\psi:=-\log\|\det(\gC)\|_\beta\]
satisfy the assumption of Theorem~\ref{main}(i), and the set $S:=\{x~|~e^{-2\psi}\notin L^1_\loc(x)\}$ is exactly the pole set of $\det(\gC)$. Hence, we obtain the following: an entire Grauert tube is affine outside a codimension-one subset.

\begin{thm}\label{egt}
Let $(M,g)$ be a compact Riemannian manifold admitting an entire Grauert tube $TM$. Then the metric $g$ extends to a nondegenerate meromorphic $(2,0)$-tensor $\gC$ on $TM$. Let $S$ denote the pole set of $\gC$. For any $p \in TM \setminus S$, there exists a holomorphic function $b_p \in \mathcal{O}(TM)$ such that $b_p(p)=1$ and $b_p|_S=0$, and the open set $TM \setminus b_p^{-1}(0)$ is biholomorphic to a smooth affine variety on which $1/b_p$ is a regular function.

Furthermore, for any finitely many points $p_i\in TM\setminus S$, there exists a normal affine variety $\cA$ such that each $TM\setminus b_{p_i}^{-1}(0)$ embeds as a Zariski open subset of $\cA$.
\end{thm}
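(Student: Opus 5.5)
The plan is to reduce Theorem~\ref{egt} to Theorem~\ref{main}(i), using the data already announced in the introduction:
\[
\phi:=\log(1+\cosh\|v\|_g),\qquad \beta:=dd^c(e^\phi),\qquad \psi:=-\log\|\det(\gC)\|_\beta .
\]
The argument has four steps.

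\textbf{Step 1: meromorphic extension of $g$.} I would write $g$ in real-analytic local coordinates as $g_{ij}(x)\,dx^i dx^j$ and complexify it near $M$, where the Grauert tube structure identifies a neighborhood of $M$ with a neighborhood of $M$ in its complexification. To continue $\gC$ over all of $TM$, I would use the structure of entire tubes: for each geodesic $\gamma$, the map $\tau+i\sigma\mapsto \sigma\dot\gamma(\tau)$ is holomorphic from $\bC$ into $TM$. Along these leaves the pullback of $\gC$ should be governed by Jacobi fields, which are entire in the complexified time variable. I expect this to show that $\gC$ extends meromorphically, with poles exactly where the complexified exponential map degenerates. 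Nondegeneracy off the pole set follows by identity-principle arguments, since $\det\gC\not\equiv0$ near $M$.

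\textbf{Step 2: hypotheses of Theorem~\ref{main}(i).} First, $\phi$ is a strictly plurisubharmonic exhaustion, being a convex increasing function of $\sqrt\rho$, which is plurisubharmonic, combined with $\rho$ being strictly plurisubharmonic.

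Second, finiteness of the Monge--Amp\`ere volume uses the HCMA $(dd^c\sqrt\rho)^n=0$. Writing $u=\sqrt\rho$ and $\phi=f(u)$, one gets
\[
(dd^c\phi)^n=n\,f'(u)^{n-1}f''(u)\,du\wedge d^cu\wedge (dd^cu)^{n-1}.
\]
Fiber integration over level sets of $u$ gives a bound $\int_0^\infty f'^{n-1}f''\,C(u)\,du$. Here $C(u)$ grows at most like $e^{(n-1)u}$, with constant depending on curvature, while $f'\to1$ and $f''\sim e^{-u}$. I would aim for convergence via a comparison with $\int (dd^c\log(1+\cosh u))^n$, which is the model computation on flat tubes $\bC^n$.

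Third, for the Ricci inequality: $\det\gC$ is a meromorphic section of $K^2$. Hence, where it has no zeros or poles, $dd^c\log\|\det\gC\|^2_\beta$ is, up to normalization, $2\Ric(\beta)$. As a current, $\Ric(\beta)+\tfrac12dd^c\psi$ equals a positive combination of currents of integration on the zero divisor of $\det\gC$, while the polar divisor enters with the sign that makes $\psi\to+\infty$ near $S$. I need to check signs carefully. Since $\psi=-\log\|\cdot\|$, it is the poles that give $e^{-2\psi}=\|\det\gC\|^2$ nonintegrable, and nondegeneracy rules out zeros. So the current is zero off the poles, and at the poles one gets a positive divisor term.

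\textbf{Step 3: the growth bound $\psi\le A\phi+B$.} I expect this to be the main obstacle. It requires a lower bound $\|\det\gC\|_\beta\ge e^{-A\phi-B}$, that is, controlling $\gC$ against $\beta$ at infinity, away from poles as well as near them (near poles the norm blows up, which helps). I would first estimate $|\gC(\xi,\xi)|$ along each leaf $\bC\to TM$ via Jacobi fields, which grow at most exponentially in $\sigma=\sqrt\rho$. I would then compare with $\beta\approx e^\phi\,dd^c\phi+e^\phi d\phi\wedge d^c\phi$, where $e^\phi\sim\cosh u$. Together these should give $\log$-linear growth in $\sqrt\rho$, which is exactly linear in $\phi$, and uniformity follows from compactness of $M$.

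\textbf{Step 4: conclusion.} Finally, I identify $S=\{e^{-2\psi}\notin L^1_\loc\}$ with the pole set. Since $e^{-2\psi}=\|\det\gC\|_\beta^2$, it fails to be locally integrable precisely at the poles, because a pole of order $k\ge1$ of a section of $K^2$ gives $|h|^{-2k}$, which is nonintegrable. Theorem~\ref{main}(i) then yields both statements of Theorem~\ref{egt} verbatim.
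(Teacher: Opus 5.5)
Your reduction to Theorem~\ref{main}(i) with $\phi=\log(1+\cosh u)$, $\beta=dd^c e^\phi$ and $\psi=-\log\|\det\gC\|_\beta$ is the paper's. However, the step you flag as the main obstacle is a genuine gap, and the mechanism you propose points the wrong way. Bounding $\psi$ above needs a \emph{lower} bound on $\|\det\gC\|_\beta=|\det\gC|/\det\beta$ in a common holomorphic frame. An upper bound on the size of $\gC$, which is what growth of Jacobi fields would give, is the wrong direction, and it says nothing about $\det\beta$ in a frame adapted to $\gC$. In the frame $\{\xi_i^{1,0}\}$ of parallel vector fields one has $\det\gC=1/\det\Phi'$. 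Also $\Omega(\xi_i,\eta_j)=\tau\delta_{ij}$, $\Omega(\xi_i,\xi_j)=0$ and $J\Xi=(H-\Xi A)B^{-1}$ give $\det\beta=2^{-n}u\cosh u\,\sinh^{n-1}u\,\det(B)^{-1}$ with $B=\mathrm{Im}\,\Phi$, so
\[
\psi=\log\Bigl(2^{-n}u\cosh u\,\sinh^{n-1}u\,\frac{|\det\Phi'|}{\det B}\Bigr).
\]
Everything therefore hinges on an upper bound for $|\det\Phi'|/\det(\mathrm{Im}\,\Phi)$ that is uniform over all leaves as $u\to\infty$. Compactness of $M$ only handles compact ranges of $u$. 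The missing idea is structural. $\Phi$ is symmetric (vanishing Wronskian of $\Xi$ and $H$) and has $\mathrm{Im}\,\Phi>0$ on the upper half-plane (Lempert--Sz\H oke), so it is a matrix Herglotz function. Its integral representation gives the Schwarz--Pick inequality $|\det\Phi'(\sigma+i\tau)|\le\tau^{-n}\det B(\sigma+i\tau)$, hence $\psi\le\log(2^{-n}u^{1-n}\cosh u\,\sinh^{n-1}u)\le n\phi+C$. Your premise that Jacobi fields are entire in complex time is also false. For Sz\H oke's surfaces $M_d$ with $d>0$, the coefficient $\sin s/\sqrt{d\sin^2 s+1}$ of the Jacobi field $X_1$ along a meridian branches at $s=k\pi\pm i\,\mathrm{arcsinh}(1/\sqrt d)$, which are exactly the zeros of $\Phi'_\perp$, i.e.\ points of the tube singularity.

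Step~1 has a related gap. The identity principle only gives $\det\gC\not\equiv 0$. Your own Ricci computation needs $\det\gC$ to have \emph{no} zeros, since a zero divisor would enter $\Ric(\beta)+\frac12 dd^c\psi$ with a negative sign, and the theorem asserts nondegeneracy off the poles. You also give no actual mechanism for meromorphy. The paper gets both at once from the leafwise identity $\gC(\Xi^{1,0},\Xi^{1,0})=(\Phi')^{-1}$, the continuation of $g(\Xi,\Xi)=(A')^{-1}$, which follows from constancy of the Wronskian. Thus $1/\det\gC=\det\Phi'$ is holomorphic. Globalization then uses real-analytic dependence of the parallel frames on the leaf together with $\bar\partial\gC=0$ on $T^\varepsilon M$. (Note also that $\psi\to-\infty$ near $S$, not $+\infty$.) Finally, your volume estimate does not close. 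The quantity $C(r)=\int_{\{u=r\}}d^cu\wedge(dd^cu)^{n-1}$ is independent of $r$, by Stokes on $\{r_1<u<r_2\}$ and $(dd^cu)^n=0$ off $M$. Hence $\int_{TM}(dd^c\phi)^n=C\int_0^\infty n f'^{\,n-1}f''\,dr=C$. The bound $C(r)\lesssim e^{(n-1)r}$ you posit would not yield convergence for $n\ge 2$, and the appeal to a flat model does not replace this argument.
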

For a detailed discussion of Theorem \ref{egt}, see Section \ref{egtdisc}.
\subsection{Discussion of Theorem \ref{main}}

Demailly's affineness criterion (Theorem~\ref{DAC}) provides a necessary and sufficient condition for a Stein manifold to be affine, and serves as a cornerstone connecting Stein and affine geometry. Among its hypotheses, the most essential is condition (b), a Ricci curvature bound. Conditions (a) finite volume, and (c) finite-dimensional cohomology are properties that are naturally expected for a Stein manifold presumed to be affine.

Condition (b) eliminates non-affine behavior in the Stein setting as follows. An affine variety admits a coordinate ring of regular functions and comes with a divisor at infinity, yielding strong control on both its function theory and boundary behavior. In contrast, for a Stein manifold, the analogue of the coordinate ring is $\mathcal{O}(X)$, which consists of arbitrary holomorphic functions, and the boundary can be highly pathological. 

The requirement that there exists a function $\psi$ with controlled growth $\psi \le A\phi + B$ such that $dd^c \psi$ bounds the Ricci form implies, via H\"ormander's $L^2$ estimates, that weighted $L^2$ holomorphic functions are dense in $\mathcal{O}(X)$. Moreover, control of the $L^2$ norms of holomorphic functions provides a way to regulate pathological behavior at the boundary of the Stein manifold.

However, condition (b) is also the most difficult to verify in practice. The usual strategies for constructing such a function $\psi$ are as follows:
\begin{enumerate}
\item Finding a global Ricci potential $\psi$ with controlled growth $\psi\le A\phi+B$ such that $dd^c \psi = -\Ric(\beta)$.
\item Alternatively, one may use multiple sections of the determinant line bundle $K_X^{-m}$. Recall that such bundles carry a natural norm induced by the Kähler form $\beta$. Since $\Ric(\beta)$ is locally expressed as $-\idd \log \det(\beta)$, it follows that for a section $s_i \in H^0(X,K_X^{-m})$ which does not vanish at a given point, the function
\[
\psi_i := \frac{1}{m}\log \|s_i\|_\beta^2
\]
satisfies $\idd \psi_i = -\Ric(\beta)$ away from the zero set of $s_i$. Therefore, by choosing finitely many sections $s_i$ with no common zero and such that $\|s_i\|_\beta \le e^{A\phi+B}$, one may define $\psi := \max_i \psi_i$.
\end{enumerate}

Both approaches are extremely difficult to realize on a general Stein manifold. The first condition would imply the vanishing of the real first Chern class, which is a strong restriction not expected even for general affine varieties. The second approach may appear more accessible in light of Cartan's Theorem~A; however, even constructing a single section satisfying the growth condition $\|s\|_\beta\le e^{A\phi+B}$ is in general highly nontrivial. 

Indeed, the approach of Aguilar and Burns ultimately amounts to finding a nonvanishing $(n,0)$-form with controlled growth, i.e., a special section of the canonical bundle. This in turn yields a nonvanishing holomorphic section of $K_X\otimes\overline{K_X}$ via $V^{n,0} \wedge \overline{V^{n,0}}$. However, $V^{n,0}$ may in general have poles; therefore, this construction provides only a single section in the second strategy described above. We refer to Section \ref{egtdisc} for further details.


In the context of this usual strategy, Theorem~\ref{main} provides a criterion that is more broadly applicable than Demailly's original criterion. Indeed, even in situations where it is difficult or impossible to construct more than one section $s$ of $K_X^{-m}$ with controlled growth arising from the second strategy, by setting $\psi := \frac{1}{m}\log \|s\|_\beta^2$, one still obtains a certain degree of algebraicity of $X$.

The statements in the theorem can be interpreted as follows.
\begin{itemize}
\item[(i)] \emph{Without any cohomological assumptions}, the existence of such a $\psi$ ensures that $X\setminus S$ is covered by birationally equivalent affine Zariski open subsets.
\item[(ii), (iii)] As additional reasonable conditions are imposed, stronger global algebraicity of $X \setminus S$ follows.
\item[(iv)] In the case $S = \emptyset$, one recovers Demailly's Affineness Criterion~\ref{DAC} under a relaxed regularity condition on $\psi$.
\end{itemize}

Finally, we remark that the additional assumptions appearing at each stage of Theorem~\ref{main} are, in a certain sense, essential. First, the finite-dimensionality of cohomology required to pass from (i) to (ii) is already noted to be necessary for global algebraicity in Demailly's original work \cite[Chapter~13]{D85}. For the additional assumption from (ii) to (iii), note that $\psi$ can be quite arbitrary. For instance, let $X = \mathbb{C}^n$ and $\phi = \log(1+\|Z\|^2)$. Then $\beta := dd^c e^\phi$ is Ricci-flat, and any plurisubharmonic function $\psi$ satisfying $\psi \le A\phi + B$ meets the condition of the theorem. If we take $\psi = n \log \|Z\|$, then $X \setminus S = \mathbb{C}^n \setminus \{0\}$, so all the conditions in (ii) are satisfied, but this space is not Stein for $n\ge 2$. This shows that the Stein assumption in (iii) cannot be omitted.

\noindent\textbf{Acknowledgements.}
The author is grateful to Prof. Chi Li for helpful discussions, which greatly contributed to the development of this paper. The author also thanks Prof. Burt Totaro, Prof. L\'aszl\'o Lempert, and Prof. R\'obert Sz\H oke for their kind interest and insightful comments. The author was partially supported by the NSF (Grant No.~DMS-2305296).

\part{Revised Demailly's Criterion}
Our main observations for the proof of Theorem \ref{main} are as follows, and the proof will proceed accordingly.

\begin{enumerate}
\item If $e^{-2\psi}$ is integrable, then most of the $L^2$ estimates essential for controlling the function theory on $X$ become available.

\item The singularity set
\[
S := \{ x \mid e^{-2\psi} \notin L^1_\loc(x) \}
\]
to which the above argument does not apply is a zero locus of a multiplier ideal. Hence, using functions $b_p$ obtained via suitable weighted $L^2$ estimates, one can ``cut out'' $S$ and reduce the situation to one analogous to Demailly's original criterion. More precisely, using weighted $L^2$ functions $b_p$ satisfying $b_p|_S = 0$, we consider the open set $X \setminus b_p^{-1}(0)$. This allows us to carry over much of the argument of Demailly's criterion to the present setting.

\item Moreover, the function theory on $X \setminus b_p^{-1}(0)$ enjoys many remarkable properties. By exploiting these, one can prove the affineness of $X \setminus b_p^{-1}(0)$ without imposing any cohomological conditions.
\end{enumerate}

\noindent\textbf{Conventions and notation.}
Throughout this paper, we define $d^c = i(\bar{\partial} - \partial)$ and $dd^c = 2i\,\partial\bar{\partial}$. For a real-valued function $V$, we write $V_+ := \max(0,V)$.
\section{Demailly's Monge--Amp\`ere measure theory}
\subsection{$\phi$-polynomials}

A fundamental difference between holomorphic functions on a Stein manifold and regular functions on an affine variety is that the former allow exponential growth (for instance $e^z$) and hence need not be algebraic. Consequently, in order to prove that a given Stein manifold is in fact affine, one must select candidates for coordinate functions whose growth is sufficiently controlled so that they behave similarly to regular functions.

We therefore introduce the following definitions from \cite{D85}.

\begin{defn}\label{phi}
Let $X$ be an $n$-dimensional Stein manifold and let $\phi$ be a strictly psh exhaustion function on $X$. Set $\alpha := dd^c\phi$.

\begin{enumerate}
\item For $r>0$, consider the hypersurface $S(r):=\{\phi=r\}$ equipped with the measure
\[
\mu_r := \alpha^{n-1}\wedge d^c\phi ,\qquad \mu_r(V):=\int_{S(r)}V d\mu_r~,
\]
where $V$ is a measurable function. For a measurable function $F$, define the $\phi$-degree of $F$ by
\[
\delta_\phi(F) := \limsup_{r\to\infty} \frac{1}{r} \int_{S(r)} \log_+|F| \, d\mu_r =\limsup_{r\to\infty}\frac{1}{r} \mu_r(\log_+|F|) \in [0,\infty],
\]
where $\log_+ t := \max(0,\log t)$.

\item Define
\[
A_\phi(X) := \{F\in\cO(X)\mid \delta_\phi(F)<\infty\}.
\]
Elements of $A_\phi(X)$ are called \emph{$\phi$-polynomials}. By the properties of $\limsup$ and $\log$, $A_\phi(X)$ is a ring.

\item The fraction field
\[
\cK_\phi(X):=\left\{\frac{F}{G}\mid F,G\in A_\phi(X), G\not\equiv0\right\}
\]
is called the \emph{$\phi$-rational function field}, and its elements are called \emph{$\phi$-rational functions}.
\end{enumerate}
\end{defn}

The following result is a consequence of \cite[Theorem 8.5]{D85}.

\begin{thm}\label{trdeg}
Let $X$ be an $n$-dimensional Stein manifold with finite Monge--Amp\`ere volume
\[
\Vol(X):=\int_X (dd^c\phi)^n < \infty.
\]
Then the following hold:
\begin{enumerate}
\item $0\le \trdeg_{\bC}\cK_\phi(X)\le n$.
\item If $\trdeg_{\bC}\cK_\phi(X)=n$, then $\cK_\phi(X)$ is a finite type extension of $\bC$.
\end{enumerate}
\end{thm}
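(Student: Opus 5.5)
The plan is to derive Theorem~\ref{trdeg} from Demailly's theory of $\phi$-polynomials. The key ingredient is a \emph{Bezout-type counting estimate}: for $F_1,\dots,F_n\in A_\phi(X)$, the number of isolated common zeros in a sublevel set $\{\phi<r\}$ should be bounded by
\[
\frac{C\,\delta_\phi(F_1)\cdots\delta_\phi(F_n)}{\Vol(X)}.
\]
I would aim to use an inequality of the form
\[
\int_{\{\phi<r\}} dd^c\log|F_1|\wedge\cdots\wedge dd^c\log|F_n| \;\le\; C\prod_j \delta_\phi(F_j)\,\Vol(X),
\]
which comes from iterating Jensen--Lelong formulas with respect to the exhaustion $\phi$, as in \cite[\S 3--8]{D85}. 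The number of isolated zeros counted with multiplicity is dominated by the left-hand side, so everything reduces to this estimate.

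\textbf{Part (1).} Argue by contradiction. Suppose $F_1,\dots,F_{n+1}\in A_\phi(X)$ are algebraically independent, and let $d=\max_j\delta_\phi(F_j)$. By subadditivity of $\log_+$, every monomial $F^\mu$ with $|\mu|\le k$ has $\phi$-degree at most $kd$. The space of polynomials in $F_1,\dots,F_{n+1}$ of degree $\le k$ therefore has dimension about $k^{n+1}/(n+1)!$, while Demailly's estimate bounds the number of independent vanishing conditions that a $\phi$-polynomial of degree $\le kd$ can satisfy at a point.

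\begin{itemize}
\item \emph{Schwarz lemma step.} Concretely, I would impose vanishing to order $s$ at a fixed point $x_0$ where $F_1,\dots,F_n$ form local coordinates. This costs about $s^n/n!$ linear conditions.
\item \emph{Choice of parameters.} Taking $s\sim k^{1+1/n}$ with a small constant, linear algebra produces a nonzero polynomial $P$ with $G:=P(F)$ vanishing to order $s$ at $x_0$.
\item \emph{Contradiction.} Demailly's Schwarz lemma for finite-volume exhaustions bounds the vanishing order by $\delta_\phi(G)\cdot C$, which is $O(k)$. This contradicts $s\gg k$ unless $G\equiv0$, and $G\equiv 0$ contradicts algebraic independence.
\end{itemize}

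\textbf{Part (2).} Assume $\trdeg=n$ and pick a transcendence basis $F_1,\dots,F_n$. Every $h\in\cK_\phi(X)$ is algebraic over $L=\bC(F_1,\dots,F_n)$. The degree $[L(h):L]$ is uniformly bounded, by the number $N$ of points in a generic fiber of $F=(F_1,\dots,F_n)$.
\begin{itemize}
\item \emph{Bounding $N$.} The Bezout estimate above bounds $N$ by $C\prod\delta_\phi(F_j)\Vol(X)<\infty$.
\item \emph{Why this suffices.} A separable algebraic extension of $L$ (characteristic $0$) in which every element has degree $\le N$ is finite. By the primitive element theorem it then has degree $\le N$.
\item \emph{Why $N$ controls $[L(h):L]$.} Distinct conjugates of $h$ are separated by the values of $h$ on the fiber of $F$, and $h$ is determined on that fiber.
\end{itemize}
So $\cK_\phi(X)$ is a finite extension of a purely transcendental field, hence finitely generated.

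\textbf{Main obstacle.} The hardest step is the Bezout/Schwarz estimate with constants controlled by $\Vol(X)$. It requires careful Stokes/Jensen computations on the level sets $S(r)$ using the measures $\mu_r$, including handling $\limsup$ versus actual growth along a sequence of radii. I would simply cite \cite[Theorem 8.5]{D85} for this, and check that its hypotheses reduce to $\Vol(X)<\infty$ and strict plurisubharmonicity of $\phi$.
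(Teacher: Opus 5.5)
The paper gives no argument of its own here; it quotes \cite[Theorem~8.5]{D85}. Your part (1) is a correct Siegel count, and it is the same count the paper itself runs in Section~\ref{algebraization} using Proposition~\ref{zero} with $V=0$. You do not need $F_1,\dots,F_n$ to be coordinates at $x_0$: the jet space at $x_0$ has dimension $\binom{n+s-1}{n}$ regardless, and algebraic independence would not give such a point anyway.

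Two bookkeeping remarks. First, \cite[Theorem~8.5]{D85} \emph{is} the statement you are proving, not a Bezout estimate; the input you actually use is the Schwarz lemma recorded as Proposition~\ref{zero}. Second, your two Bezout displays disagree with each other and are not scale-invariant. Under $\phi\mapsto c\phi$ one has $\delta_\phi\mapsto c^{n-1}\delta_\phi$ and $\Vol\mapsto c^n\Vol$, so the invariant form is $\prod_j\delta_\phi(F_j)/\Vol(X)^{n-1}$.

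The genuine gap is in part (2), at the claim $[L(h):L]\le N$. The values $h(x)$ for $x\in F^{-1}(w)$ are roots of the specialized minimal polynomial $P_w$. But $F$ is not proper, so nothing you say forces \emph{every} root of $P_w$ to be attained on the fiber. For instance, an open piece of an $m$-sheeted branched cover of $\bC^n$ can lie entirely in one sheet over an open set. Closing this would require showing that $(F,h)(X)$ is dense in the irreducible hypersurface $\{P(w,t)=0\}$. That is a pluripolarity or density statement in the spirit of Proposition~\ref{pluripolar}, which is a separate and nontrivial result. In addition, ``generic fiber'' presupposes $dF_1\wedge\cdots\wedge dF_n\not\equiv0$. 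Algebraic independence of holomorphic functions does not give this; $z_1$ and $e^{z_1}$ on $\bC^2$ are a counterexample.

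The simplest repair is to rerun your part (1) argument instead of counting fibers. Take $G\in A_\phi(X)$ with $m:=[L(G):L]$, and consider $Q=\sum_{i<m}a_i(F)G^i$ with $\deg a_i\le k$.
\begin{itemize}
\item These $Q$ are nonzero whenever $(a_i)\neq0$, and they form a space of dimension $m\binom{n+k}{n}$.
\item Set $D:=\sum_j\delta_\phi(F_j)$. Then $\delta_\phi(Q)\le kD+(m-1)\delta_\phi(G)$; the constants coming from coefficients drop out because $\mu_r(1)\le\Vol(X)$.
\item Imposing vanishing at $a$ to order greater than $C(a)\bigl(kD+(m-1)\delta_\phi(G)\bigr)$ costs about $(C(a)Dk)^n/n!$ linear conditions.
\item So if $m>(C(a)D)^n$, a nonzero such $Q$ exists for $k\gg1$ and violates Proposition~\ref{zero}.
\end{itemize}
Hence $[L(G):L]\le (C(a)D)^n$, uniformly in $G\in A_\phi(X)$. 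Now take $G_0$ of maximal degree. For any $G$, one has $L(G_0,G)=L(G_0+cG)$ for generic $c\in\bC$, and maximality forces $G\in L(G_0)$. This gives $\cK_\phi(X)=\bC(F_1,\dots,F_n,G_0)$, with no Bezout estimate and no analysis of fibers.
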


\subsection{Weighted $\phi$-polynomials}

Let $X$ be a Stein manifold endowed with a strictly psh exhaustion $\phi$ and set
\[
\beta := dd^c(e^\phi).
\]
Since the main arguments to follow rely on weighted $L^p$ estimates (in particular $L^2$ estimates) with respect to the measure $\beta^n$, we introduce an analogue of $\phi$-polynomials adapted to this weighted setting, again from \cite{D85}.

\begin{defn}\label{weighted}
Define
\[
L_\phi^p(X):=\left\{f\in L^0(X)\;\middle|\; \int_X |f|^p e^{-C\phi}\beta^n<\infty \text{ for some } C>0 \right\},
\]
\[
L_\phi^\infty(X):=\left\{f\in L^0(X)\;\middle|\; |f|\le e^{C(1+\phi)} \text{ a.e. for some } C>0 \right\},
\]
\[
L_\phi^0(X):=\bigcup_{p>0} L_\phi^p(X), \qquad 
A_\phi^p(X):=L_\phi^p(X)\cap \cO(X).
\]
\end{defn}

The following properties follow from \cite[Lemmas 11.2 and 11.3]{D85}.

\begin{lem}\label{weightphi}
\begin{enumerate}
\item $1\in A_\phi^p(X)$ for all $p\ge 0$, and for $p\ge q>0$,
\[
L_\phi^p(X)\subset L_\phi^q(X), \qquad A_\phi^p(X)\subset A_\phi^q(X).
\]

\item $L_\phi^0(X)$ is a $\bC$-algebra, and $A_\phi^0(X)$ is an integrally closed subalgebra of $L_\phi^0(X)$.

\item If $V$ is plurisubharmonic and $e^V \in L_\phi^0(X)$, then $\delta_\phi(e^V) < \infty$. In particular, $A_\phi^0(X) \subset A_\phi(X)$ and elements of $A_\phi^0(X)$ are $\phi$-polynomials.

\item For $f\in A_\phi^p(X)$, one has $|df|_\beta\in L_\phi^p(X)$.
\end{enumerate}
\end{lem}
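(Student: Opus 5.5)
We prove the four items of Lemma~\ref{weightphi} in order, using only H\"older's inequality, the sub-mean value property for plurisubharmonic functions, and the Cauchy inequalities. The recurring device is to compare the weighted measure $e^{-C\phi}\beta^n$ with Lebesgue measure on small balls. Throughout we use that
\[
\beta=e^\phi(\alpha+d\phi\wedge d^c\phi)\ge e^\phi\alpha .
\]

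\textbf{Item 1 ($1\in A^p_\phi(X)$ and monotonicity).} We must show $\int_X e^{-C\phi}\beta^n<\infty$ for some $C$. Since $\beta^n=e^{n\phi}\big(\alpha^n+n\,\alpha^{n-1}\wedge d\phi\wedge d^c\phi\big)$, we integrate over shells $\{r<\phi<r+1\}$ using the coarea formula, which gives $d\phi\wedge\mu_r$. The masses $\mu_r(S(r))=\int_{\{\phi<r\}}\alpha^n$ are bounded by $\Vol(X)$ when the volume is finite. Without that hypothesis, they are still bounded polynomially or exponentially in $r$, by Demailly's estimates. Choosing $C$ large then makes the integral finite. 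This growth bound for the shell masses is the main obstacle, and we would take it from \cite{D85}.

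Monotonicity then follows from H\"older's inequality with exponents $p/q$ and $p/(p-q)$:
\[
\int|f|^qe^{-C'\phi}\beta^n\le\Big(\int|f|^pe^{-C\phi}\beta^n\Big)^{q/p}\Big(\int e^{-C''\phi}\beta^n\Big)^{1-q/p},
\]
with the exponents matched suitably.

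\textbf{Item 2 ($L^0_\phi(X)$ is an algebra and $A^0_\phi(X)$ is integrally closed).} If $f\in L^p_\phi$ and $g\in L^q_\phi$, then H\"older's inequality gives $fg\in L^{pq/(p+q)}_\phi$, and $|f+g|^s\le 2^s(|f|^s+|g|^s)$ handles sums. For integral closure, let $h\in L^0_\phi$ be holomorphic and satisfy $h^m+a_1h^{m-1}+\dots+a_m=0$ with $a_j\in A^0_\phi$. The root bound gives
\[
|h|\le 2\max_j|a_j|^{1/j},
\]
and each $|a_j|^{1/j}$ lies in $L^0_\phi$, so $h\in L^0_\phi$.

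\textbf{Item 3 (finite $\phi$-degree).} Let $V$ be psh with $e^V\in L^p_\phi$. Fix a point $x$ and a coordinate ball $B(x,r(x))$ whose radius is chosen so that $\phi$ and $\log\det\beta$ vary by at most $O(1)$ on it. Since $\beta$ is comparable to $e^\phi\alpha$, we may take $r(x)\ge e^{-C(1+\phi(x))}$. The sub-mean value property for the psh function $pV$ then gives
\[
e^{pV(x)}\le \frac{1}{|B|}\int_B e^{pV}\le e^{C'(1+\phi(x))}.
\]
Hence $V\le C''(1+\phi)$ pointwise. Integrating $\log_+e^V\le C''(1+r)$ against $\mu_r$ and using that $\mu_r$ has bounded mass shows $\delta_\phi(e^V)<\infty$.

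For $f\in A^0_\phi(X)$, the function $V=\log|f|$ is psh, so $f\in A_\phi(X)$.

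\textbf{Item 4 ($|df|_\beta\in L^p_\phi$ for $f\in A^p_\phi$).} Work on the same small balls. The Cauchy inequalities give
\[
|df(x)|\le r^{-1}\sup_{B(x,r/2)}|f|,
\]
and the sub-mean value property bounds this supremum by the $L^p$ average of $|f|$ over $B(x,r)$. Converting the norm with respect to $\beta$ introduces only a factor $e^{C\phi}$. Integrating over $X$ and applying Fubini over the covering by balls of comparable size yields $\int|df|_\beta^p e^{-C'\phi}\beta^n<\infty$.
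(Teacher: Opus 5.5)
\textbf{Items 3 and 4 have a genuine gap.} Both rest on the claim that every $x$ has a holomorphic coordinate ball of radius $r(x)\ge e^{-C(1+\phi(x))}$ on which $\phi$ and $\log\det\beta$ vary by $O(1)$. Nothing in the hypotheses gives this.

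First, $\beta=e^\phi(\alpha+d\phi\wedge d^c\phi)$ is not comparable to $e^\phi\alpha$; only $\beta\ge e^\phi\alpha$ holds. Already for $\phi=\log(1+|z|^2)$ on $\bC^n$ the ratio in the radial direction is $e^\phi$.

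More decisively, even a two-sided comparison would not help. The size of balls on which a K\"ahler metric is uniformly Euclidean in holomorphic coordinates depends on higher derivatives of $e^\phi$ (curvature, injectivity radius). A strictly psh exhaustion with $\int\alpha^n<\infty$ says nothing about these. Your intermediate pointwise bound $V\le C''(1+\phi)$ would in fact give $A^0_\phi(X)\subset L^\infty_\phi(X)$, which is much more than the lemma asserts. The Cauchy estimates and the ``covering by balls of comparable size'' in item 4 need the same missing control.

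\textbf{How to repair items 3 and 4.} Both can be proved globally on level sets, using only finite volume. The paper itself gives no proof and cites \cite[Lemmas 11.2, 11.3]{D85}.

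For item 3, pass to the nonnegative psh function $V_+$. Note $\log_+e^V=V_+$, and $e^{pV_+}\le 1+e^{pV}$ is integrable against $e^{-C\phi}\beta^n$ by item 1. Use $\beta^n\ge n\,e^{n\phi}\alpha^{n-1}\wedge d\phi\wedge d^c\phi$, the coarea formula, and Jensen's inequality for the probability measure $\mu_t/\mu_t(1)$:
\[
\infty>\int_X e^{pV_+-C\phi}\beta^n\ \ge\ n\int_{\bR} e^{(n-C)t}\,\mu_t(1)\exp\Bigl(\frac{p\,\mu_t(V_+)}{\mu_t(1)}\Bigr)\,dt .
\]
The Lelong--Jensen formula
\[
\mu_t(V_+)=\int_{\{\phi<t\}}V_+\alpha^n+\int_{-\infty}^t\int_{\{\phi<s\}}dd^cV_+\wedge\alpha^{n-1}\,ds
\]
shows that $t\mapsto\mu_t(V_+)$ is nondecreasing. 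Also $0<\mu_{r_0}(1)\le\mu_t(1)\le\Vol(X)$ for $t\ge r_0$. Integrating over $[r,r+1]$ (with $C\ge n$) gives $\mu_r(V_+)\le\frac{\Vol(X)}{p}\bigl((C-n)(r+1)+O(1)\bigr)$, so $\delta_\phi(e^V)<\infty$.

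For item 4, integrate $dd^c|f|^q$ against $\theta_k(\phi)\beta^{n-1}$. Here $dd^c|f|^q$ is a positive multiple of $|f|^{q-2}\,i\,\partial f\wedge\overline{\partial f}$, and $\theta_k(s)=\chi(s-k)e^{-Cs}$ with $\chi$ a nonincreasing cutoff. Then $\theta_k'\le0$, and $\theta_k''\le C_3e^{-Cs}$ uniformly in $k$. Together with $d\phi\wedge d^c\phi\le e^{-\phi}\beta$, integration by parts and $k\to\infty$ give
\[
\int|f|^{q-2}|df|_\beta^2e^{-C\phi}\beta^n\le C_4\int|f|^qe^{-(C+1)\phi}\beta^n .
\]
H\"older then yields $|df|_\beta\in L^q_\phi(X)$ for $q\le2$. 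Hence $|df|_\beta\in L^{\min(p,2)}_\phi(X)\subset L^0_\phi(X)$, which is all that is used later, in Proposition~\ref{jet}(b).

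\textbf{Minor corrections to items 1 and 2.} In item 1, drop the claim that the shell masses $\mu_r(1)$ are at most exponential without finite volume. For a suitable radial exhaustion of the unit disc they grow double-exponentially, and then $1\notin L^p_\phi(X)$. In item 2, start from an arbitrary holomorphic $h$ satisfying the monic relation, not from $h\in L^0_\phi$, which is the conclusion you want. The root bound $|h|\le2\max_j|a_j|^{1/j}$ does not use that assumption.
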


Note that in \cite[Lemma 11.3(a)]{D85}, Demailly proved that if a holomorphic function $f$ belongs to $L_\phi^0(X)$, then $\delta_\phi(f) < \infty$. However, the argument essentially proves Lemma \ref{weightphi}(3). This observation will be used later in Section \ref{algebraization} for the Siegel-type vanishing order counting argument of holomorphic functions. Moreover, statement (3) allows us to restrict future constructions to functions in $A_\phi^0(X)$ while retaining the key properties of $\phi$-polynomials.

\section{Construction of $b_p$ and $X\setminus b_p^{-1}(0)$}\label{cutout}
First, note that $S:=\{x \mid e^{-2\psi}\notin L^1_\loc(x)\}$ coincides with the zero set of the multiplier ideal
\[
\mathcal{J}(\psi)
=
\left\{
f \in \mathcal{O}(X)
\;\middle|\;
|f|^2 e^{-2\psi} \in L^1_\loc(X)
\right\}.
\]
In particular, the existence of $\psi$ yields functions in $\mathcal{O}(X)$ that vanish along $S$ and have controlled $L^p$ norm. More precisely, this follows from the following Bombieri--H\"ormander--Skoda type lemma.
\begin{lem}\label{skoda}
Let $\tau\in L^1_\loc(X)$ satisfy
\[
i\partial\bar\partial\tau+\Ric(\beta)\;\ge\;\lambda\,\beta
\]
for some positive continuous function $\lambda$.
Let $u$ be a $(0,1)$-form on $X$ with locally $L^2$ coefficients such that $\bar\partial u=0$ and
\[
\int_X \lambda^{-1}\,|u|_\beta^2\,e^{-\tau}\,\beta^n \;<\;\infty.
\]
Then there exists $g\in L^2_{\loc}(X)$ with $\bar\partial g=u$ and
\[
\int_X |g|^2 e^{-\tau}\beta^n \;\le\; \int_X \lambda^{-1}|u|_\beta^2 e^{-\tau}\beta^n \;<\;\infty.
\]
\end{lem}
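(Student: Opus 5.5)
This is the standard Hörmander $L^2$ estimate on a complete Kähler manifold, twisted by the canonical bundle. The plan is to view functions as $(n,0)$-forms with values in $K_X^{-1}$, so that the Ricci term appears as curvature, and then run the Bochner--Kodaira--Nakano / Hörmander argument, treating a singular $\tau$ by regularization.

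\emph{Setup.} Since $X$ is Stein, it is in particular weakly pseudoconvex, and $\beta=dd^c(e^\phi)$ is a Kähler metric on it. One could worry whether $\beta$ is complete, but Demailly's theorem states the estimate on weakly pseudoconvex Kähler manifolds without any completeness assumption, so completeness is not needed. We identify a function $g$ with the $K_X^{-1}$-valued $(n,0)$-form $g\cdot \sigma$, where $\sigma$ is the tautological section of $K_X\otimes K_X^{-1}$. We equip $K_X^{-1}$ with the metric induced by $\beta$, multiplied by the weight $e^{-\tau}$. The curvature of this twisted bundle is
\[
i\Theta=\Ric(\beta)+i\partial\bar\partial\tau\ge\lambda\beta .
\]
For $(n,1)$-forms the Bochner--Kodaira--Nakano curvature operator $[i\Theta,\Lambda_\beta]$ is then bounded below by $\lambda$. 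Moreover, the $L^2$ norms of $(n,0)$ and $(n,1)$ forms with values in $K_X^{-1}$ reduce, pointwise, to $|g|^2e^{-\tau}\beta^n$ and $|u|^2_\beta e^{-\tau}\beta^n$ respectively.

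\emph{Smooth case.} When $\tau$ is smooth, Demailly's $L^2$ existence theorem for semipositive line bundles on weakly pseudoconvex Kähler manifolds, applied to the $\bar\partial$-closed $(n,1)$-form $u\otimes\sigma$, gives a solution $g$ of $\bar\partial g=u$ with
\[
\int_X|g|^2e^{-\tau}\beta^n\le\int_X\langle [i\Theta,\Lambda]^{-1}u,u\rangle e^{-\tau}\beta^n\le\int_X\lambda^{-1}|u|^2_\beta e^{-\tau}\beta^n .
\]

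\emph{General case (main obstacle).} The difficulty is that $\tau$ is only $L^1_\loc$ and may be singular. We proceed in four steps.
\begin{enumerate}
\item Exhaust $X$ by the Stein sublevel sets $X_c=\{\phi<c\}$.
\item On each $X_c$, regularize $\tau$ by Demailly's regularization (or Richberg-type approximation on the Stein manifold). This produces smooth $\tau_\varepsilon\downarrow\tau$ with $i\partial\bar\partial\tau_\varepsilon+\Ric(\beta)\ge(\lambda-\varepsilon)\beta$ on $X_c$. Here we use that $\tau+\rho$ is quasi-psh for a local potential $\rho$ of the smooth form $\Ric(\beta)$, so the loss in positivity can be made arbitrarily small on the relatively compact set $X_c$. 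Equivalently, one can first replace $\lambda$ by $(1-\delta)\lambda$ and absorb the loss.
\item Since $\tau_\varepsilon\ge\tau$ gives $e^{-\tau_\varepsilon}\le e^{-\tau}$, the right-hand side of the smooth estimate is uniformly bounded by the given finite integral, up to the factor $(1-\delta)^{-1}$.
\item Extract weak $L^2$ limits, first as $\varepsilon\to0$ using monotone convergence together with weak lower semicontinuity of the norm, then as $c\to\infty$ by a diagonal argument. This yields $g$ with $\bar\partial g=u$ in the sense of distributions, and then let $\delta\to0$.
\end{enumerate}
The delicate points are that the regularization must preserve the curvature lower bound up to $\varepsilon$ on compacts, and that the weighted norms pass to the limit. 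The resulting $g$ lies in $L^2_\loc$ because $e^{-\tau}$ is locally bounded below, $\tau$ being locally bounded above as a quasi-psh function.
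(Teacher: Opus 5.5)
The paper gives no proof of this lemma; it is quoted as a standard Bombieri--H\"ormander--Skoda/Demailly $L^2$ estimate. Your argument is the standard proof of that estimate and is correct: you regard functions as $K_X^{-1}$-valued $(n,\cdot)$-forms so that $\Ric(\beta)+i\partial\bar\partial\tau$ becomes the curvature, invoke Demailly's estimate on weakly pseudoconvex K\"ahler manifolds (which needs no completeness for $(n,q)$-forms), then regularize $\tau$ and pass to weak limits.

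The one step to justify by a precise reference rather than by ``local potentials'' is the smooth decreasing regularization on $X_c$ with loss $\varepsilon\beta$. Richberg's theorem requires continuity, and gluing local convolutions of a quasi-psh function generally costs positivity proportional to Lelong numbers. On a Stein manifold, however, you get it by embedding $X\subset\mathbb{C}^N$, composing $\tau$ with a holomorphic retraction of a neighbourhood onto $X$, and convolving in $\mathbb{C}^N$.
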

Using this lemma, we control the function theory of $X$ as follows.
\begin{prop}\label{jet}
The following statements hold.
\begin{enumerate}
\item[(a)] Given finitely many points $p_1,\dots,p_k\notin S':=\{x~|~e^{-\psi}\notin L^1_\loc(x)\}\subset S$, and prescribed $s_i$-jets at each
$p_i$, there exists $f\in A_\phi^0(X)$ realizing the prescribed $s_i$-jet at each $p_i$.

\item[(b)] Let $f_1,\dots,f_n\in A_\phi^0(X)$ satisfy
\[
\bigwedge_{i=1}^n df_i(p)\neq 0
\qquad\text{for some }p\notin S.
\]
Then there exists $b_p\in A_\phi^0(X)$ such that $b_p(p)=1$ and
\[
\{\textstyle\bigwedge_{i=1}^n df_i=0\}\subset \{b_p=0\}.
\]

\item[(c)] $\trdeg_\mathbb{C}\cK_\phi(X)=n$, and $\cK_\phi(X)$ is a finite type extension of $\mathbb{C}$.
\end{enumerate}
\end{prop}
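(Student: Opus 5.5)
All three parts follow the standard Hörmander--Demailly recipe: cut off a local holomorphic solution, then correct the cut-off error with Lemma \ref{skoda} using a weight that carries logarithmic poles at the relevant points together with the weight $\psi$. Throughout, $\theta_i$ is a cut-off equal to $1$ near $p_i$, supported in a small coordinate ball $B_i$, and $z^{(i)}$ are local coordinates centred at $p_i$.

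For (a), let $P_i$ be a polynomial in $z^{(i)}$ realizing the prescribed $s_i$-jet at $p_i$, and set $f_0 := \sum_i \theta_i P_i$. Take the weight
\[
\tau := C\phi + \psi + \sum_i 2(n+s_i)\,\theta_i\log|z^{(i)}|.
\]
The condition $i\partial\bar\partial\tau + \Ric(\beta) \ge \lambda\beta$ is arranged as follows.
\begin{itemize}
\item The curvature hypothesis gives $\Ric(\beta) + \frac12 dd^c\psi = \Ric(\beta) + i\partial\bar\partial\psi \ge 0$, with the paper's conventions.
\item The negative parts of $i\partial\bar\partial(\theta_i\log|z^{(i)}|)$ are bounded and supported on $\supp d\theta_i$, which is compact and away from $p_i$.
\item These negative parts are absorbed by $C\,dd^c\phi$ for large $C$, since $\phi$ is strictly psh. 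Because $\beta = e^\phi(\alpha + d\phi\wedge d^c\phi)$, we get $\lambda \sim e^{-\phi}$; this is harmless, as only polynomial-type weights $e^{-C\phi}$ matter for $L^2_\phi$.
\end{itemize}
Put $u := \bar\partial f_0 = \sum_i P_i\,\bar\partial\theta_i$. It is supported on compact annuli away from the $p_i$, so $\int\lambda^{-1}|u|^2_\beta e^{-\tau}\beta^n$ is finite, using only $e^{-\psi}\in L^1_{\loc}$ there. We must shrink the $B_i$ so that they avoid $S'$; this works because $S'$ is closed and $p_i\notin S'$.

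Lemma \ref{skoda} then produces $g$ with $\int |g|^2 e^{-\tau}\beta^n < \infty$. Near $p_i$ the weight $e^{-\tau}$ contains $|z^{(i)}|^{-2(n+s_i)}$, and the factor $e^{-\psi}$ is not bounded below but does not destroy this; so $g$, which is holomorphic near $p_i$, vanishes to order $> s_i$. Hence $f := f_0 - g$ has the prescribed jets.

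To show $f\in A^0_\phi(X)$: since $\psi \le A\phi + B$, we have $e^{-\tau} \ge e^{-(C+A)\phi - B}$ away from the $B_i$, so $g\in L^2_\phi$. The subtle point is the exponent of the weight: $\psi$ versus $2\psi$. I would use $\psi$ in $\tau$, so that only $e^{-\psi}\in L^1_{\loc}$ is required, matching the definition of $S'$, and compensate for the curvature with $C\phi$ as above.

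For (b), let $J := \det(\partial f_i/\partial z_j)$ in coordinates near $p$, which is nonzero at $p$. I would construct $b_p$ by solving a $\bar\partial$-problem whose weight is singular along the set $Z := \{\bigwedge_i df_i = 0\}$. Set
\[
\tau' := C\phi + 2\psi + 2n\log\Bigl|\bigwedge df_i\Bigr|_\beta + \text{(log pole at } p\text{)}.
\]
The key identity is $i\partial\bar\partial\log|\bigwedge df_i|^2_\beta = \Ric(\beta)$ off $Z$: this is $-\Ric$ with the sign flipped, since $df_1\wedge\dots\wedge df_n$ is a holomorphic section of $K_X$. Consequently the $\Ric(\beta)$ term from $\log|\bigwedge df_i|$ has the favourable sign to combine with the hypothesis, and $\psi$ controls the remaining part.

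Take $b_0 := \theta$ near $p$ and correct it by $g$. Non-integrability of $e^{-\tau'}$ near every point of $Z$ forces $g = b_0 = 0$ on $Z$ away from $p$. The point $p$ must lie outside $S$ for $e^{-2\psi}$ to be integrable at $p$ itself, where $b_p$ must be nonzero. Finally, the growth bounds are checked as follows:
\begin{itemize}
\item $|df_i|_\beta \in L^0_\phi$ by Lemma \ref{weightphi}(4);
\item $|\bigwedge df_i|_\beta \le \prod_i |df_i|_\beta$;
\item Hölder's inequality then gives $b_p\in A^0_\phi(X)$.
\end{itemize}
I expect the bookkeeping of exponents here, ensuring that $e^{-\tau'}$ is non-integrable along $Z$ yet locally integrable at $p$, to be the main technical obstacle. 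If it fails, I would fall back on taking $b_p := J\cdot h$ for a suitable $h$ from (a), possibly raising powers of the $\bigwedge df_i$ factor.

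For (c), apply (a) at a point $p\notin S$, using $1$-jets, to obtain $f_1,\dots,f_n\in A^0_\phi(X)$ with $\bigwedge df_i(p)\ne0$. These are algebraically independent: a polynomial relation among them would force the Jacobian to vanish identically. Lemma \ref{weightphi}(3) shows they lie in $A_\phi(X)$, so $\trdeg\cK_\phi(X)\ge n$. Theorem \ref{trdeg} gives the reverse inequality and the finite type conclusion.
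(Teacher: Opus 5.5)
\textbf{Parts (a) and (c).} These follow the paper's argument: the same cut-off polynomials, essentially the same weight $C\phi+\psi+(n+s_i)\sum_i\theta_i\log|z^{(i)}|^2$ with only $e^{-\psi}\in L^1_\loc$ needed, and the same transcendence-degree count via Theorem~\ref{trdeg}. Two small corrections apply.
\begin{itemize}
\item $e^{-\psi}$ \emph{is} locally bounded below, since $\psi\le A\phi+B$ gives $e^{-\psi}\ge e^{-A\phi-B}$. That lower bound is exactly what the jet-vanishing argument uses, and likewise the vanishing along $Z$ in (b).
\item $\lambda$ is not $\sim e^{-\phi}$ in general, because of the $d\phi\wedge d^c\phi$ term in $\beta$. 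This does not matter: $u$ has compact support, so any positive continuous $\lambda$ works.
\end{itemize}

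\textbf{The gap is in (b).} Your weight $\tau'=C\phi+2\psi+2n\log|\bigwedge df_i|_\beta+\cdots$ does not satisfy the curvature hypothesis of Lemma~\ref{skoda} when $n\ge 2$. Since $2n\log|\bigwedge df_i|_\beta=n\log|\bigwedge df_i|^2_\beta$ and $\idd\log|\bigwedge df_i|^2_\beta=\Ric(\beta)+2\pi[Z]$, you get
\[
\idd\tau'+\Ric(\beta)=2\bigl(\Ric(\beta)+\idd\psi\bigr)+(n-1)\Ric(\beta)+2\pi n[Z]+C\idd\phi+\cdots .
\]
The term $(n-1)\Ric(\beta)$ has no lower bound from the hypotheses. $C\idd\phi$ absorbs it only on compact sets. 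Absorbing it globally would require a rapidly growing convex function of $\phi$ in the weight, which destroys $g\in L^0_\phi$. Compensating with $(n+1)\psi$ instead fails too: it requires $e^{-(n+1)\psi}$ to be integrable on $\supp\bar\partial b_0$. That can fail for $p\notin S$ when the analytic set $\{e^{-(n+1)\psi}\notin L^1_\loc\}$ passes through $p$, so (b) would not be proved for all $p\notin S$.

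\textbf{The fix (the paper's choice).} Use the factor with coefficient one, $\log|\bigwedge df_i|^2_\beta$, together with $2\psi$. Then each copy of $\Ric(\beta)$ is paired with one copy of $\idd\psi$: one copy comes from the lemma and one from $\log|\bigwedge df_i|^2_\beta$. This pairing is precisely why $S$ is defined through $e^{-2\psi}$ rather than $e^{-\psi}$.

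No higher power is needed. If $J$ is holomorphic and vanishes at a point, then $|J|^{-2}$ is already non-integrable there (the log canonical threshold is $\le 1$). Combined with the local positive lower bound on $e^{-2\psi-C\phi}$, this forces $g|_Z=0$.

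\textbf{Your fallback does not work.} $J=\det(\partial f_i/\partial z_j)$ is only the local coordinate representative of the section $\bigwedge df_i$ of $K_X$, not a global holomorphic function, so $J\cdot h$ is not defined on $X$. Raising the power of the $\bigwedge df_i$ factor only enlarges the curvature deficit.
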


\begin{proof}
\noindent (a)
First, note that $S$ and $S'$ are closed. Hence, we can fix finitely many points $p_1,\dots,p_k$ and choose local coordinate systems
$z^{(1)},\dots,z^{(k)}$ near $p_1,\dots,p_k$, respectively, such that these coordinate neighborhoods are disjoint
from $S'$.
Let $\chi_i$ be a cutoff function supported in the coordinate neighborhood of $z^{(i)}$ and identically $1$
near $p_i$.
For each $p_i$, choose a polynomial $P_i$ of degree $s_i$ in $z^{(i)}$ whose $s_i$-jet at $p_i$ is the desired one.
Set
\[
h \;:=\; \sum_i \chi_i P_i,\qquad u\;:=\;\bar\partial h,\qquad s\;:=\;\max_i s_i.
\]
Choose $C>0$ sufficiently large so that
\[
\eta \;:=\; C\phi + (n+s)\sum_i \chi_i \log|z^{(i)}|^2
\]
is plurisubharmonic. Define
\[
\tau \;:=\; \psi+\eta+2\log(1+e^\phi).
\]
Then, since $\Ric(\beta)+\idd\psi\ge 0$ and $dd^c\log(1+e^\phi)\ge\frac{1}{(1+e^\phi)^2}\beta$, one obtains
\[
\idd\tau+\Ric(\beta)\;\ge\;\frac{1}{(1+e^\phi)^2}\,\beta.
\]
Hence we may take $\lambda=(1+e^\phi)^{-2}$.
Since $u:=\bar \partial h$ vanishes near $p_1,\cdots,p_k$ and $\supp(u)$ is relatively compact, while $e^{-\psi}$ is locally integrable on $\supp(u)$, we have
\[
\int_X \lambda^{-1}|u|_\beta^2 e^{-\tau}\beta^n <\infty.
\]
By the lemma there exists $g$ with $\bar\partial g=u$ and $\int_X |g|^2 e^{-\tau}\beta^n<\infty$.
Moreover, since $\psi\le A\phi+B$, $\eta\le C\phi+C'$ for some $C'>0$, and
$2\log(1+e^\phi)\le 2\phi+2\log 2$, there exist constants $A',B'$ such that
\[
e^{-\tau}\;\ge\; e^{-(A'\phi+B')}.
\]
Thus
\[
\int_X |g|^2 e^{-(A'\phi+B')}\beta^n <\infty,
\]
so $g\in L_\phi^0(X)$.
Finally, since $\bar\partial h=u$ and $\bar\partial g=u$, we have $\bar\partial(h-g)=0$, hence
\[
f\;:=\;h-g \in \mathcal{O}(X),
\qquad\text{and in particular } f\in A_\phi^0(X).
\]
The finiteness
\[
\int_X |g|^2 e^{-\tau}\beta^n
=\int_X |g|^2 e^{-\psi-C\phi}(1+e^\phi)^{-2}
\Bigl(\prod_i |z^{(i)}|^{-2(n+s)\chi_i}\Bigr)\,\beta^n \;<\;\infty
\]
forces $g$ to have no nonzero jets of order $\le s$ at each $p_i$.
Therefore $f=h-g$ has, at each $p_i$, the prescribed jet (matching that of $h$ up to order $s_i$).

\medskip
\noindent (b)
We apply the same argument as (a) with a slight modification.
Choose a local coordinate system $z$ near $p$ such that the neighborhood is disjoint from
$S$ and from $\{\bigwedge_{i=1}^n df_i=0\}$.
Let $\chi$ be a cutoff supported in this coordinate neighborhood and identically $1$ near $p$.
Set $h:=\chi$ and
\[
\eta \;:=\; C\phi + n\,\chi \log|z|^2,
\]
for some $C>0$ chosen so that $\eta$ is psh.
Define
\[
\tau \;:=\; 2\psi+\eta+\log\bigl|\textstyle\bigwedge_{i=1}^n df_i\bigr|_\beta^2 + 2\log(1+e^\phi).
\]
Let $[Z]$ denote the zero divisor of $\bigwedge_{i=1}^n df_i$ (viewed as a $(1,1)$-current).
Then
\[
\idd\tau+\Ric(\beta)
=2\idd\psi+2\Ric(\beta)+2\pi[Z]+\idd\eta+dd^c\log(1+e^\phi)
\;\ge\;\frac{1}{(1+e^\phi)^2}\,\beta.
\]
As in the proof of (a), $\bar\partial h$ vanishes near $p$ and $\supp(\bar\partial h)$ is relatively compact. Moreover, on $\supp(\bar\partial h)$, $e^{-2\psi}$ is locally integrable and $\log\bigl|\textstyle\bigwedge_{i=1}^n df_i\bigr|_\beta^2$ is finite.  Therefore, taking again $\lambda=(1+e^\phi)^{-2}$ yields
\[
\int_X \lambda^{-1}|\bar\partial h|_\beta^2 e^{-\tau}\beta^n<\infty.
\]
By the lemma there exists $g$ with $\bar\partial g=\bar\partial h$ and
\[
\int_X |g|^2 e^{-\tau}\beta^n
=\int_X |g|^2 e^{-2\psi-C\phi}\,|z|^{-2n\chi}\,
\bigl|\textstyle\bigwedge_{i=1}^n df_i\bigr|_\beta^{-2}\,(1+e^\phi)^{-2}\,\beta^n
\;<\;\infty.
\]
The finiteness of this integral, despite the factors
$\bigl|\bigwedge_{i=1}^n df_i\bigr|_\beta^{-2}$ and $|z|^{-2n\chi}$, forces $g$ to vanish on
$\{\bigwedge_{i=1}^n df_i=0\}$ and at $p$.
Define $b_p:=h-g$. Then $\bar\partial b_p=0$, so $b_p$ is holomorphic, $b_p(p)=1$, and
$\{\bigwedge_{i=1}^n df_i=0\}\subset\{b_p=0\}$.
Moreover, since $\psi\le A\phi+B$, $\eta\le C\phi+C'$ for some $C'>0$, and
$2\log(1+e^\phi)\le 2\phi+2\log 2$, there exist $A',B'$ such that
\[
e^{-\tau}\;\ge\; e^{-(A'\phi+B')}\,\bigl|\textstyle\bigwedge_{i=1}^n df_i\bigr|_\beta^{-2}.
\]
Hence $|g|\cdot\bigl|\bigwedge_{i=1}^n df_i\bigr|_\beta^{-1}\in L_\phi^2(X)$, and therefore
\[
|g|\;\le\;\Bigl(|g|\cdot\bigl|\textstyle\bigwedge_{i=1}^n df_i\bigr|_\beta^{-1}\Bigr)
\prod_{i=1}^n |df_i|_\beta \in L_\phi^0(X),
\]
which implies $b_p=h-g\in A_\phi^0(X)$.

\medskip
\noindent (c)
By (a), we can construct $n$ functions $f_1, \cdots, f_n$ which separate all $1$-jets at a point $p\notin S$. These functions must be algebraically independent, hence $\trdeg_{\mathbb{C}} \cK_\phi(X) \ge n$. By Theorem~\ref{trdeg}, it follows that $\trdeg_{\mathbb{C}} \cK_\phi(X) = n$, and $\cK_\phi(X)$ is a finite type extension of $\mathbb{C}$.
\end{proof}

\begin{rem}\label{vanish}
The function $b_p$ produced in the above Proposition \ref{jet} vanishes on $S:=\{x~|~e^{-2\psi}\notin L^1_\loc(x)\}$.
Indeed, $b_p$ is of the form $h-g$, where $h$ vanishes on $S$ by construction, and where the finiteness
of an integral of the form $\int_X |g|^2 e^{-\tau}\beta^n<\infty$ forces $g$ to vanish on $S$, since
$e^{-\tau}\sim e^{-2\psi}$ along $S$.
\end{rem}

\begin{prop}\label{isom}
For any point $p\notin S:=\{x~|~e^{-2\psi}\notin L^1_\loc(x)\}$, there exists a function $b_p\in A_\phi^0(X)$ with $b_p(p)=1$
and functions $f_1,\dots,f_N\in A_\phi^0(X)$ such that
\[
F\;:=\;(b_p,f_1,\dots,f_N)
\]
induces an analytic isomorphism from $X\setminus b_p^{-1}(0)$ onto a smooth open subset of an $n$-dimensional irreducible
normal algebraic variety $\cA\subset\mathbb{C}^{N+1}$.
\end{prop}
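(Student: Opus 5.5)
The plan is to build the map in three stages: a local embedding near $p$ from jets, a finite ring extension giving the variety $\cA$, and then an extension of injectivity and immersivity to all of $X\setminus b_p^{-1}(0)$.

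\textbf{Step 1: local coordinates and a first $b_p$.} Fix $p\notin S$. By Proposition~\ref{jet}(a), choose $f_1,\dots,f_n\in A_\phi^0(X)$ whose $1$-jets at $p$ are linearly independent coordinates, so that $\bigwedge df_i(p)\neq0$. Proposition~\ref{jet}(b) then gives $b_p\in A_\phi^0(X)$ with $b_p(p)=1$ and $\{\bigwedge df_i=0\}\subset\{b_p=0\}$. By Remark~\ref{vanish}, $b_p|_S=0$. On $X\setminus b_p^{-1}(0)$ the map $(f_1,\dots,f_n)$ is therefore a local biholomorphism onto its image.

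\textbf{Step 2: the variety $\cA$.} By Proposition~\ref{jet}(c), $\cK_\phi(X)$ is finitely generated of transcendence degree $n$. Choose generators $f_{n+1},\dots,f_N\in A_\phi^0(X)$; the denominators can be cleared because $A_\phi^0(X)$ is a ring. Together with $f_1,\dots,f_n$ and $b_p$, these generate a subalgebra $R\subset A_\phi^0(X)$ whose fraction field is $\cK_\phi(X)$. The Zariski closure of the image of $F=(b_p,f_1,\dots,f_N)$ in $\mathbb{C}^{N+1}$ is $\operatorname{Spec}$ of the image of $\mathbb{C}[T_0,\dots,T_N]$, which is irreducible of dimension $n$.

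For normality, I would replace $R$ by its integral closure in $\cK_\phi(X)$. This closure is finite over $R$ (a finitely generated domain is Japanese), and by Lemma~\ref{weightphi}(2) it still lies inside $A_\phi^0(X)$, since integral elements of $\cK_\phi(X)$ that are holomorphic stay in the integrally closed $A_\phi^0$. Holomorphy of such integral elements follows from boundedness via the monic equation together with Riemann extension on the normal manifold $X$. Adjoining finitely many module generators enlarges $N$ and makes $\cA=\operatorname{Spec}R$ normal.

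\textbf{Step 3: injectivity and immersivity on $X\setminus b_p^{-1}(0)$.} This is the main obstacle. The approach mirrors Demailly's argument and uses the birationality of $F$. Since $\cK_\phi(X)=\operatorname{Frac}(R)$, any further $\phi$-polynomial $h$ is a rational function of the coordinates of $F$. Using Proposition~\ref{jet}(a), $A_\phi^0(X)$ separates points and tangents off $S'$. So for $x\neq y$ in $X\setminus b_p^{-1}(0)$ there is $h$ separating them; writing $h=P(F)/Q(F)$ and arguing on a Zariski dense set shows that $F(x)\neq F(y)$ generically.

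To handle everything at once, I would make the non-injectivity and critical loci into analytic subsets defined by $\phi$-polynomial data. The critical locus is $\{dF \text{ not of rank } n\}\subset\{\bigwedge df_i=0\}\subset b_p^{-1}(0)$, so immersivity holds automatically on $X\setminus b_p^{-1}(0)$. For the fibres, I would use the normality of $\cA$ together with Zariski's main theorem: $F$ is a birational local biholomorphism into a normal variety, so over the open set where $F$ is finite it is injective.

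Finiteness is where I would use the weighted $L^2$ control. I would first try to modify $b_p$ by multiplying it by a further $\phi$-polynomial vanishing on the locus where $F$ fails to be proper or injective, constructed again via Lemma~\ref{skoda} with a weight singular along that locus. This gives injectivity, and the image is then an open subset of $\cA_{\mathrm{reg}}$ because $F$ is a local biholomorphism there.
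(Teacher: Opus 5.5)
Steps 1 and 2 are sound in outline, but Step 3 does not prove what is needed. You only extract that $F(x)\neq F(y)$ ``generically'', and the tools you then reach for do not close the gap. Zariski's main theorem concerns algebraic morphisms, so it says nothing directly about the transcendental map $F$ on the non-algebraic manifold $X\setminus b_p^{-1}(0)$. Finiteness or properness of $F$ is neither available nor relevant, since the image is meant to be an open, not closed, subset of $\cA$. The proposal to multiply $b_p$ by a $\phi$-polynomial vanishing on ``the locus where $F$ fails to be proper or injective'' via Lemma~\ref{skoda} is not a construction: you have not shown that this locus is analytic, what weight would be singular along it, or that the new function is still $1$ at $p$. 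As written, injectivity of $F$ on $X\setminus b_p^{-1}(0)$ is unproven.

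The missing idea is a pointwise local-branch argument using normality, and your Step~2 already makes it available. Suppose $x\neq y$ in $X\setminus b_p^{-1}(0)$ with $F(x)=F(y)=m$.
\begin{itemize}
\item Since $\bigwedge_i df_i\neq 0$ at $x$ and $y$, $F$ is an immersion there, with $n$-dimensional image germs contained in $\cA$.
\item Because $\cA$ is normal, it is locally irreducible at $m$. So both image germs equal the germ of $\cA$ at $m$, and $F$ maps neighborhoods $U_x,U_y$ biholomorphically onto a common connected neighborhood $W$ of $m$. This also shows that the image is open in $\cA_{\mathrm{reg}}$.
\item Pick $h\in A_\phi^0(X)$ with $h(x)\neq h(y)$ by Proposition~\ref{jet}(a). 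This is legitimate because $b_p$ vanishes on $S\supset S'$.
\item Write $h=P(F)/Q(F)$. Then $h\circ(F|_{U_x})^{-1}$ and $h\circ(F|_{U_y})^{-1}$ are holomorphic on $W$ and agree with $P/Q$ on the dense set $W\setminus\{Q=0\}$. Hence they coincide, giving $h(x)=h(y)$, a contradiction.
\end{itemize}

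The paper uses exactly this argument, but in an iterative scheme. It adds one separating function at a time and normalizes after each addition. The argument then shows each addition satisfies $[\mathbb{C}(F_{j+1}):\mathbb{C}(F_j)]\geq 2$, so the process stops because $\cK_\phi(X)$ is finite over $\mathbb{C}(f_1,\dots,f_n)$.

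Your one-shot version (all generators at once, one normalization) would also work with this argument inserted, but Step~2 needs a repair. $\cK_\phi(X)$ is the fraction field of $A_\phi(X)$, not of $A_\phi^0(X)$, and clearing denominators does not produce generators in $A_\phi^0(X)$. Instead, take the fraction field of $A_\phi^0(X)$. It is an intermediate field of the finite extension $\cK_\phi(X)/\mathbb{C}(f_1,\dots,f_n)$, hence generated by finitely many elements of $A_\phi^0(X)$. This is what you need, since the separating functions $h$ come from $A_\phi^0(X)$.
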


\begin{proof}
By the previous proposition~\ref{jet}, choose $f_1,\dots,f_n\in A_\phi^0(X)$ separating $1$-jets at $p$, i.e.
$\bigwedge_{i=1}^n df_i(p)\neq 0$, and choose $b_p\in A_\phi^0(X)$ such that $b_p(p)=1$ and
$b_p$ vanishes on $\{\bigwedge_{i=1}^n df_i=0\}$.
Define inductively
\[
F_{j}:=(b_p,f_1,\dots,f_n,f_{n+1},\dots,f_{n_j}),\qquad f_i\in A_\phi^0(X),
\]
with $F_{0}:=(b_p,f_1,\dots,f_n)$, and let $\cA_j$ be the Zariski closure of $F_j(X)$ in $\mathbb{C}^{n_j+1}$.
\begin{enumerate}
\item[(i)] If $\cA_j$ is not normal, let $\cA_{j+1}$ be the normalization of $\cA_{j}$ and define
$F_{j+1}=(b_p,f_1,\dots,f_{n_{j+1}})$ to be the lift of $F_{j}$ to $\cA_{j+1}$.

\item[(ii)] If $\cA_{j}$ is normal but there exist distinct $z,z'\in X\setminus b_p^{-1}(0)$ with
$F_{j}(z)=F_{j}(z')$, then by Proposition \ref{jet} choose $f_{n_j+1}\in A_\phi^0(X)$ such that
$f_{n_j+1}(z)\neq f_{n_j+1}(z')$, and set
\[
F_{j+1}:=(b_p,f_1,\dots,f_n,f_{n+1},\dots,f_{n_j},f_{n_j+1}).
\]
\end{enumerate}

Step (i) is well-defined: any newly added coordinate function in the normalization is integral over the subring generated by the existing coordinates, and since $A_\phi^0(X)$ is integrally closed, it belongs to $A_\phi^0(X)$. Step (ii) is also well-defined. Since $b_p^{-1}(0)$ contains $S$ (Remark \ref{vanish}), it contains $S' := \{x \mid e^{-\psi}\notin L^1_\loc(x)\} \subset S$, which is the locus where Proposition~\ref{jet} (a) cannot be applied. Consequently, on $X \setminus b_p^{-1}(0)$ one may freely apply Proposition~\ref{jet} (a).

Since $f_1,\dots,f_n$ occur among the entries of $F_{j}$ and $\trdeg_{\mathbb{C}}\cK_\phi(X)=n$ (Proposition \ref{jet}, (c)),
each $\cA_{j}$ is an $n$-dimensional irreducible algebraic variety.
Moreover, on $X\setminus b_p^{-1}(0)$ we have $\bigwedge_{i=1}^n df_i\neq 0$, hence $F_{j}$ is \'{e}tale there.

The iteration of (i) and (ii) terminates after finitely many steps.
Indeed, suppose that there exist distinct points $z\neq z'$ such that $F_j(z)=F_j(z')=:m\in \cA_j$ and $f_{n_j+1}(z)\neq f_{n_j+1}(z')$. Since $F_j$ is \'etale at both $z$ and $z'$, there exists a sufficiently small neighborhood $W$ of $m$ such that $F_j$ is biholomorphic onto $W$ on two branches $Z\ni z$ and $Z'\ni z'$, respectively. If $f_{n_j+1}\in \bC(b_p,f_1,\dots,f_{n_j})$, then, since $\bC(b_p,f_1,\dots,f_{n_j})$ is the rational function field of $\cA_j$, there exists a rational function $R$ on $\cA_j$ such that $f_{n_j+1}=R\circ F_j$. This contradicts the fact that $F_j(z)=F_j(z')$ but $f_{n_j+1}(z)\neq f_{n_j+1}(z')$. Therefore,
\[
\bigl[\bC(b_p,f_1,\dots,f_{n_j},f_{n_j+1}):\bC(b_p,f_1,\dots,f_{n_j})\bigr]\ge 2.
\]
Since $\cK_\phi(X)$ is a finite type extension of $\mathbb{C}$, this can occur only finitely many times. Thus, after finitely many steps, we obtain the desired $F$ and $\cA$.
\end{proof}

\begin{rem}\label{addition}
Note that in the construction of Proposition~\ref{isom}, if one arbitrarily adds an element $f \in A_\phi^0(X)$ to the entries of $F$ and then performs again the normalization step~(i), the conclusion of Proposition~\ref{isom} remains unaffected. This observation will be used later when we construct $F$ for different choices of $b_p$ in the proof of Theorem~\ref{main}~(ii).
\end{rem}

\section{Algebraization of $X\setminus b_p^{-1}(0)$}\label{algebraization}

We now prove that the map
\[
F: X\setminus b_p^{-1}(0)\longrightarrow \cA\subset \bC^{N+1}
\]
constructed in Proposition \ref{isom} sends $X\setminus b_p^{-1}(0)$ biholomorphically onto an affine subvariety of $\cA$. The argument consists of two main steps.

\begin{enumerate}
\item After slight modification of $b_p,F,\cA$ into $f_p,\check F,\check \cA$, we prove the existence of a meromorphic rational $(1,0)$-form $h$ on $\check \cA$ whose pole divisor coincides with $\check \cA\setminus \check F\bigl(X\setminus f_p^{-1}(0)\bigr)$. This implies that $\check F(X\setminus f_p^{-1}(0))$ is quasi-affine. We then prove that $\cA\setminus F(X\setminus b_p^{-1}(0))$ is an algebraic hypersurface of $\cA$.

\item We show that $F\bigl(X\setminus b_p^{-1}(0)\bigr)$ is rationally convex, i.e., for any compact set 
$K\subset F\bigl(X\setminus b_p^{-1}(0)\bigr)$ and the regular function ring 
$R\bigl(F(X\setminus b_p^{-1}(0))\bigr)$, the rational convex hull 
\[
\widehat K
=\left\{x\in F\bigl(X\setminus b_p^{-1}(0)\bigr)\;\middle|\;
|f(x)|\le \sup_K |f| \text{ for all } f\in R\bigl(F(X\setminus b_p^{-1}(0))\bigr)\right\}
\]
is a compact subset of $F(X\setminus b_p^{-1}(0))$. Together with a theorem of Mok to be recalled later, this will imply that 
$F\bigl(X\setminus b_p^{-1}(0)\bigr)$ is affine.
\end{enumerate}

\subsection{$X\setminus b_p^{-1}(0)$ is quasi-affine}
First, we need the following modification of $b_p$, $F$, and $\cA$ as follows.
\begin{con}\label{con}
Let $F:X\to\mathbb{C}^{N+1}$, $F=(b_p,f_1,\dots,f_N)$, and $\cA$ be the same as Proposition \ref{isom}.
Choose a polynomial $Q$ on $\mathbb{C}^{N+1}$ which vanishes on the singular locus of $\cA$, $Q\circ F(p)=1$, and is divisible
by $z_0$ so that $b_p~|~Q\circ F$.
Define
\[
f_p \;:=\; Q\circ F\in A_\phi^0(X),\qquad
\check \cA:=\{(z,w)\in \mathbb{C}^{N+1}\times\mathbb{C} \;|\;
z\in \cA,\; Q(z)w=1\},\]
\[
\check F: X\setminus f_p^{-1}(0)\to \check\cA\subset \mathbb{C}^{N+2},\qquad
\check F \;:=\; \Bigl(F,\frac{1}{f_p}\Bigr).
\]
\end{con}
$F$, $b_p$, and Construction~\ref{con} are intentionally designed to satisfy the following two properties. (1) All entries of $F$ and $f_p$ belong to $A_\phi^0(X)$. (2) The zero set of $f_p$ absorbs $S:=\{x~|~e^{-2\psi}\notin L^1_\loc(x)\}$ (Remark \ref{vanish}). This design makes $\check F\!\left(X\setminus f_p^{-1}(0)\right)$ almost the same as the setting of Demailly's original criterion. Consequently, several estimates from the original proof of Demailly's affineness criterion \cite{D85} also apply in the present setting. Whenever this occurs, we note why the corresponding estimate from \cite{D85} remains valid here and apply it to avoid excessive repetition.




\subsubsection{Construction of the $(1,0)$-form $h$}

We begin by constructing a candidate $(1,0)$-form $h$ in step 1. The following theorem is the main ingredient (\cite{D85}, Theorem 15.3).

\begin{thm}\label{mingrowth}
Let $\cA$ be an $n$-dimensional smooth affine algebraic subvariety of $\bC^N$ and set
\[
\omega := dd^c\log(1+|z|^2).
\]
Assume that there exists a positive closed $(1,1)$-current $T$ on $\cA$ such that
\[
\int_\cA T\wedge \omega^{n-1} < \infty.
\]
Then there exist a psh function $V$ on $\cA$ and a $C^\infty$ $(1,0)$-form $u$ on $\cA$ such that for some constants $C_1,C_2,C_3>0$:
\begin{enumerate}
\item $dd^c V \ge T$,
\item $V(z)\le C_1\log^+|z|$,
\item $dd^c V - T = \bar\partial u$,
\item $|u|_\omega \le C_2(1+|z|)^{C_3}$.
\end{enumerate}
\end{thm}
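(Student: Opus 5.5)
Since $\cA$ is smooth affine, I would take a smooth projective compactification $\overline{\cA}\subset\mathbb{P}^M$ (Hironaka) and write $\cA=\overline{\cA}\setminus D$ with $D$ a divisor. The idea is to solve $dd^cV=T$ globally on $\cA$ through $L^2$ theory for a complete Kähler metric on $\cA$, with all weights polynomial in $|z|$. The steps follow the usual chain: the current $T$ gives a curvature problem for a line bundle, which gives a $\bar\partial$-problem, which gives $V$.

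\textbf{Step 1: local potentials.} On $\bC^N$, or more simply on $\cA$ directly, a positive closed $(1,1)$-current $T$ with $\int_\cA T\wedge\omega^{n-1}<\infty$ is locally $dd^c$ of a psh function. The finite-mass hypothesis in the Fubini--Study-type metric $\omega$ is what lets us control $T$ near infinity. I would first write $T=dd^cU_0+\theta$ with $\theta$ smooth, where $U_0$ is built by convolution with a Lelong-type kernel: a Green kernel for $\omega$ of the form $\log|z-w|^2-\log(1+|w|^2)$ restricted to $\cA$. Finite mass makes the resulting $U_0$ converge and satisfy $U_0\le C_1\log^+|z|$. This is essentially Lelong's construction of the canonical potential of a finite-order current.

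\textbf{Step 2: removing the error.} The error $\theta:=T-dd^cU_0$ is a smooth closed real $(1,1)$-form; it is in fact $d$-exact after the kernel correction. I would write $\theta=d\gamma=\partial\gamma^{0,1}+\bar\partial\gamma^{1,0}$, with $\gamma$ obtained from a homotopy operator. The operator should be one whose output has polynomial growth, which is available because every object here is algebraic and $\omega$-weighted estimates are polynomial. Then I would solve $\bar\partial v=\gamma^{0,1}$ using Hörmander's estimate with weight $C\log(1+|z|^2)$ on the Stein manifold $\cA$. This gives $v$ of polynomial $L^2$ growth, and interior mean-value estimates upgrade that to pointwise polynomial growth.

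\textbf{Step 3: assembling $V$ and $u$.} I would set $V:=U_0+2\,\mathrm{Re}\,v+K\log(1+|z|^2)$ for a suitable large $K$. This makes $dd^cV\ge T$, since $\omega>0$ absorbs the bounded leftover, and it keeps $V\le C_1\log^+|z|$. The form $u$ is then defined by $dd^cV-T=\bar\partial u$. Here $u$ combines $K\,\partial\log(1+|z|^2)$ with the $(1,0)$-part of $\gamma$ corrected by $\partial v$. The polynomial bound $|u|_\omega\le C_2(1+|z|)^{C_3}$ follows from the polynomial bounds in Steps 1 and 2.

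\textbf{Main obstacle.} The hard step is Step 1 together with the requirement that $V$ be psh and not merely a difference of potentials. The positivity of $T$ must survive the global correction. I would first try making the correction terms smooth with $dd^c$ bounded by $C\omega$, so that adding $K\log(1+|z|^2)$ restores plurisubharmonicity. This does not, however, automatically bound $V$ from above by $C_1\log^+|z|$. Securing that bound needs a Jensen-type estimate from the finite mass $\int T\wedge\omega^{n-1}$ on the spheres $\{|z|=r\}\cap\cA$.
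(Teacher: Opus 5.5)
The paper does not prove this statement; it quotes it from Demailly \cite[Th\'eor\`eme 15.3]{D85}. Your sketch therefore has to stand on its own, and it has two genuine gaps.

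\textbf{Step~2 rests on a false claim.} The form $\theta=T-dd^cU_0$ has the same de Rham class as $T$, and that class need not vanish, so no choice of kernel makes $\theta$ $d$-exact. For example, take $\cA=\{x^2+y^2+z^2=1\}\subset\bC^3$ and $T=[L]$ with $L=\{(1,t,it):t\in\bC\}$. Since $L$ is a line, $\int_\cA T\wedge\omega<\infty$. However, $L$ meets the real sphere $\cA\cap\mathbb{R}^3$, which generates $H_2(\cA;\mathbb{Z})\cong\mathbb{Z}$, transversally in the single point $(1,0,0)$. Hence $\{T\}\neq 0$ in $H^2(\cA;\mathbb{R})$.

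This is exactly why the statement only asks for $dd^cV-T=\bar\partial u$. The form $\bar\partial u$ need not be $d$-exact, since $\partial u\neq 0$ is allowed, and on the Stein manifold $\cA$ every smooth $\bar\partial$-closed $(1,1)$-form is $\bar\partial$-exact. So you should drop the homotopy operator, $\gamma$ and $v$. Instead, solve $\bar\partial u=(\text{smooth error})$ directly by H\"ormander's estimates with weight $C\log(1+|z|^2)$. Then get pointwise bounds from elliptic regularity, not from mean-value estimates, since the solution is not holomorphic.

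Step~1 is also wrong as written: for $n\ge 2$ the kernel $\log|z-w|^2$ does not reproduce the singularities of $T$. Take $\cA=\bC^n$ and $T=[\{w_1=0\}]$. Integrating $\log|z-w|^2-\log(1+|w|^2)$ against $T\wedge\omega^{n-1}$ gives a function that is finite on $\{z_1=0\}$. But if $T-dd^cU_0$ is smooth, then near that hyperplane $U_0$ must be a multiple of $\log|z_1|^2$ plus a smooth function. Lelong's potential uses the Newtonian kernel of the trace measure together with the flatness of $\bC^n$, and neither is available on a curved $\cA$.

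\textbf{The bound (2) is not secured,} and this is the more serious gap, since (2) is the real content of the theorem. You put $2\,\mathrm{Re}\,v$ into $V$, but $L^2$ theory bounds it only polynomially. Adding $K\log(1+|z|^2)$ absorbs only errors bounded by $K\omega$, and $\omega$ decays at infinity. No Jensen-type estimate from the mass can fix this, because $\int dd^cV\wedge\omega^{n-1}$ does not see pluriharmonic summands: $\mathrm{Re}\,z_1$ on $\bC^n$ has zero mass but linear growth.

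The missing idea is to build logarithmic growth into $V$ by construction, and to route every $\bar\partial$-solution into $u$, where polynomial growth is allowed. One way to do this:
\begin{enumerate}
\item Take a finite linear projection $\pi:\cA\to\bC^n$. Then $\pi_*T$ has finite degree, so Lelong's theorem applies to it.
\item We have $T\le\pi^*\pi_*T$, and $\pi^*\pi_*T$ is the restriction of the pullback to a smooth compactification $\bar{\cA}$ of the extension of $\pi_*T$ to $\mathbb{P}^n$. Hence $T$ has finite mass near $\bar{\cA}\setminus\cA$ and extends to $\bar T$ by Skoda--El Mir.
\item On the compact K\"ahler manifold $\bar{\cA}$, write $\bar T=\bar\theta+dd^c\bar U$ with $\bar U$ quasi-psh, hence bounded above.
\item Dominate $\bar\theta|_\cA$ by $K\omega+K'dd^c\log|s_F|_h^2$. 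Here $F$ is an effective divisor supported on $\bar{\cA}\setminus\cA$ such that $mH-F$ is ample, where $H$ is the pullback of $\mathcal{O}_{\mathbb{P}^N}(1)$.
\item Set $V=\bar U+K\log(1+|z|^2)+K'\log|s_F|_h^2$, minus a constant. This satisfies (1) and (2).
\item Finally, obtain $u$ by solving $\bar\partial$ for the smooth remainder $dd^cV-T$ with polynomial weights.
\end{enumerate}
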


Using this theorem, we define five objects: $T$, $V$, $u$, $\tau$, and ultimately the desired $(1,0)$-form $h$. Define a $(1,1)$-current $T$ on $\check \cA$ by
\[
T := dd^c(\phi\circ \check F^{-1})\quad \text{on } \check F\bigl(X\setminus f_p^{-1}(0)\bigr), 
\qquad 
T := 0\quad \text{on } \check \cA\setminus \check F\bigl(X\setminus f_p^{-1}(0)\bigr).
\]
Since all entries of $F$ and $f_p$ belong to $A_\phi^0(X)$, we may apply \cite{D85}, Proposition~12.2, to conclude that $T$ satisfies the condition of Theorem~\ref{mingrowth}.
Let $V$ be the plurisubharmonic function and $u$ the $(1,0)$-form given by the theorem, associated to the current $T$ and the form
\[
\omega := dd^c\log(1+|z|^2)\quad \text{on } \bC^{N+2}.
\]

On $\check F\bigl(X\setminus f_p^{-1}(0)\bigr)$, define
\[
\tau := V - \phi\circ \check F^{-1}.
\]
By Theorem \ref{mingrowth}.1., $\tau$ is psh and $\tau\le V$. Since $\phi$ is an exhaustion and the last coordinate of $\check F$ is $f_p^{-1}$, $\phi\circ \check F^{-1}$ diverges to $+\infty$ near the boundary of $\check F\bigl(X\setminus f_p^{-1}(0)\bigr)$. Hence $\tau$ diverges to $-\infty$ along the boundary. Therefore $\tau$ extends to a psh function on all of $\check \cA$ that is identically $-\infty$ on
\[
\check \cA\setminus \check F\bigl(X\setminus f_p^{-1}(0)\bigr).
\]
We obtain:

\begin{prop}\label{pluripolar}
The set $\check \cA\setminus \check F\bigl(X\setminus f_p^{-1}(0)\bigr)$ is pluripolar.
\end{prop}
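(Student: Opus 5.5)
The proof is essentially the construction just carried out: we exhibit a plurisubharmonic function on $\check \cA$, not identically $-\infty$, which equals $-\infty$ on $E:=\check \cA\setminus \check F\bigl(X\setminus f_p^{-1}(0)\bigr)$. That function is the extension of $\tau:=V-\phi\circ\check F^{-1}$. We carry out three steps in order.

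\textbf{Step 1: $\tau$ is psh on the image.} Set $\Omega:=\check F\bigl(X\setminus f_p^{-1}(0)\bigr)$. On $\Omega$ we have $T=dd^c(\phi\circ\check F^{-1})$, and Theorem~\ref{mingrowth}(1) gives $dd^c\tau=dd^cV-T\ge0$. Since $\check F$ is a biholomorphism onto $\Omega$ (Proposition~\ref{isom}), $\tau$ is smooth minus psh, hence upper semicontinuous. So $\tau$ is psh on $\Omega$.

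\textbf{Step 2: $\tau\to-\infty$ at $\partial\Omega$, and extension.} Let $y_k\in\Omega$ converge to a point $y\in E$, and write $y_k=\check F(x_k)$. Suppose the $x_k$ stayed in a compact subset of $X\setminus f_p^{-1}(0)$. Then along a subsequence $x_k\to x$ and $y=\check F(x)\in\Omega$, a contradiction. So the $x_k$ leave every compact subset of $X\setminus f_p^{-1}(0)$. They cannot approach $f_p^{-1}(0)$, because the last coordinate $1/f_p(x_k)$ converges to a finite limit. Hence $x_k\to\infty$ in $X$, and since $\phi$ is an exhaustion, $\phi(x_k)\to+\infty$. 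Meanwhile $V$ is psh on $\check\cA$, so it is locally bounded above near $y$. Therefore $\tau(y_k)\to-\infty$.

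Define $\tilde\tau:=\tau$ on $\Omega$ and $\tilde\tau:=-\infty$ on $E$. Upper semicontinuity follows from Step 2. The sub-mean value property along holomorphic discs holds because $\tilde\tau=\max(\tau,-M)$ locally for every $M$. More cleanly, $\max(\tilde\tau,-M)$ equals $\max(V-\phi\circ\check F^{-1},-M)$, which coincides with $-M$ near $E$ and is psh on $\Omega$. It is therefore psh on $\check\cA$, and $\tilde\tau$ is the decreasing limit of these as $M\to\infty$. Hence $\tilde\tau$ is psh on $\check\cA$, or identically $-\infty$ on a component.

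\textbf{Step 3: conclusion.} $\Omega$ is a nonempty open set on which $\tilde\tau$ is finite. Since $\check\cA$ is irreducible (as $\cA$ is), $\tilde\tau\not\equiv-\infty$. Thus $E\subset\{\tilde\tau=-\infty\}$ is pluripolar.

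The main obstacle is the properness in Step 2, namely justifying that boundary points of $\Omega$ in $\check\cA$ come only from escaping to infinity in $X$. This uses precisely the factor $1/f_p$ in $\check F$, which prevents approach to $f_p^{-1}(0)$ from staying at finite distance. One must also check that $\Omega$ is open in $\check\cA$, which follows from $\check F$ being étale of full dimension $n=\dim\check\cA$.
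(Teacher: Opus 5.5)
Your proof is correct and takes the same route as the paper. The function $\tau=V-\phi\circ\check F^{-1}$ is psh on $\check F\bigl(X\setminus f_p^{-1}(0)\bigr)$; the coordinate $1/f_p$ forces boundary sequences to escape to infinity in $X$, so $\tau\to-\infty$; and $\tau$ then extends by $-\infty$ to a psh function on the connected manifold $\check\cA$. You supply the properness and extension details that the paper states in one line.

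Two small slips. First, $\tau$ is psh minus smooth, not smooth minus psh, and that is what gives upper semicontinuity. Second, $\tau$ need only be finite off a null set of the image, not everywhere, but this still suffices for $\tilde\tau\not\equiv-\infty$.
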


Finally, define on $\check F\bigl(X\setminus f_p^{-1}(0)\bigr)$
\[
h := \partial\tau - \frac{i}{2}\,u.
\]
By Theorem \ref{mingrowth}.3., we have $2i\partial\bar\partial\tau=\bar\partial u$ on 
$\check F\bigl(X\setminus f_p^{-1}(0)\bigr)$, and since $u$ is smooth on this domain, it follows that $h$ is holomorphic on $\check F\bigl(X\setminus f_p^{-1}(0)\bigr)$.

\subsubsection{Properties of $h$}
For $\check F:=(b_p,f_1,\cdots,f_N,1/f_p)$, since $df_1,\cdots,df_n$ are linearly independent on $X\setminus f_p^{-1}(0)$, any $(1,0)$-form $h$ on $\check F(X\setminus f_p^{-1}(0))$ can be written in the form
\[
h=\sum_{j=1}^{n} h_j\,dz_j,
\]
where each $h_j$ is a holomorphic function on $\check F(X\setminus f_p^{-1}(0))$.

We now prove the following two statements.
\begin{prop}\label{rational}
Each $h_j$ is a rational function on $\check \cA$. In particular, $h$ is a rational meromorphic $(1,0)$-form on $\check \cA$.
\end{prop}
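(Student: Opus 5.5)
The plan is to follow Demailly's route (\cite{D85}, \S\S 15--16). First show that each $h_j\circ\check F$, viewed as a holomorphic function on $X\setminus f_p^{-1}(0)$, has controlled growth. Then use the algebraicity of $\phi$-polynomials (Theorem~\ref{trdeg} and Proposition~\ref{jet}(c)) to conclude that $h_j\circ\check F$ lies in $\cK_\phi(X)=\bC(b_p,f_1,\dots,f_N)$. This field is the rational function field of $\cA$, hence also of $\check\cA$, since $1/f_p=1/(Q\circ F)$.

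\emph{Step 1: pointwise and $L^2$ bounds on $h$.} Write $h=\partial V-\partial(\phi\circ\check F^{-1})-\frac i2 u$ and estimate the three pieces in $\omega$-norm on $\check F(X\setminus f_p^{-1}(0))$.
\begin{itemize}
\item The term $u$ has polynomial growth in $|z|$ by Theorem~\ref{mingrowth}.4.
\item For $\partial V$, use $V\le C_1\log^+|z|$ and $dd^cV\ge0$. A standard local mean-value/Cauchy-type argument bounds $|\partial V|$ in $L^2_{\mathrm{loc}}$ by polynomial weights.
\item For $\partial\phi$, it is enough to control $|d\phi|_\beta$, since $\beta\ge e^\phi d\phi\wedge d^c\phi$. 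This gives $|d\phi|_\beta\le e^{-\phi/2}$.
\end{itemize}
Next, solve for $h_j$ through $h=\sum h_j\,dz_j$ with $z_j=f_j$. This gives $h_j=h(\xi_j)$, where $\xi_j$ are the vector fields dual to $df_1,\dots,df_n$. Then $|h_j|\le|h|_\beta\,|\bigwedge_i df_i|_\beta^{-1}\prod_{i\ne j}|df_i|_\beta$. By Lemma~\ref{weightphi}(4) the $|df_i|_\beta$ lie in $L^p_\phi$. Polynomial growth in $|\check F|$ becomes growth of the form $e^{C\phi}$, up to powers of $|f_p|^{-1}$, because all entries of $\check F$ except $1/f_p$ lie in $A^0_\phi(X)$.

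\emph{Step 2: clearing the denominators.} Multiply by a high power of $f_p$ and by $\bigwedge_i df_i$ expressed as a function, namely $J:=\det$ of the $df_i$ relative to the $L^2$ frame. Concretely, I would show that $G_j:=f_p^{m}\,(h_j\circ\check F)$ extends holomorphically across $f_p^{-1}(0)$ and lies in $A^0_\phi(X)$ for large $m$.

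To see this, note that $f_p$ vanishes on $\{\bigwedge df_i=0\}$ by construction of $b_p$ (Proposition~\ref{jet}(b)) and $b_p\mid f_p$. Also $\tau=V-\phi$ is bounded above, so $h$ has at worst the singularities permitted by the $L^2$ bound. Using Riemann extension together with the $L^2$ estimate near $f_p^{-1}(0)$, a sufficiently large power $f_p^m$ kills these singularities. Then Lemma~\ref{weightphi}(3), applied with $V=\log|G_j|$, gives $\delta_\phi(G_j)<\infty$, so $G_j\in A_\phi(X)$.

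\emph{Step 3: algebraicity.} Since $f_p,G_j\in A_\phi(X)$, we get $h_j\circ\check F=G_j/f_p^m\in\cK_\phi(X)$. By Proposition~\ref{jet}(c), $\cK_\phi(X)$ is a finite extension of $\bC(b_p,f_1,\dots,f_N)$. By the termination argument of Proposition~\ref{isom}, $\bC(b_p,f_1,\dots,f_N)$ is in fact all of $\cK_\phi(X)$ on the étale locus. Any further element would separate points in a fibre, which was excluded, or would be integral and hence already a coordinate after normalization (Remark~\ref{addition}). So $h_j=R_j\circ\check F^{-1}$ for a rational function $R_j$ on $\check\cA$.

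The main obstacle is Step 2: the uniform control of $\partial\tau$ near the pluripolar boundary $\check\cA\setminus\check F(X\setminus f_p^{-1}(0))$, where $\tau\to-\infty$. To handle it, I would first try Demailly's trick of estimating $\int|h|^2_\omega e^{-C\log(1+|z|^2)}$ via integration by parts against $dd^c\tau\ge0$, as in \cite[Prop.~15.6]{D85}. The bounded Monge--Amp\`ere mass of $T$ from \cite[Prop.~12.2]{D85} supplies the needed finiteness.
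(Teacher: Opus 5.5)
There is a genuine gap in Step~2, and it is the crux of the proposition. To get $G_j=f_p^m\,(h_j\circ\check F)\in A_\phi(X)$ via Lemma~\ref{weightphi}(3), you need $|G_j|\in L^0_\phi(X)$. That is an \emph{unweighted} growth bound on $h$ near the part of $\check\cA$ where $\tau=-\infty$, and the estimates you propose for it are false in general:
\begin{itemize}
\item $\partial V$ need not be in $L^2_{\loc}$ for a psh $V$; take $V=\log|w|$.
\item $\tau$ is not bounded above; you only have $\tau\le V\le C_1\log^+|z|$.
\item $\int|h|^2_\omega(1+|z|^2)^{-C}\omega^n$ over $\check F(X\setminus f_p^{-1}(0))$ diverges whenever the complement of the image is nonempty, because $h$ has genuine poles there (the content of Proposition~\ref{quasi-affine}).
\end{itemize}
Integration by parts against $dd^c\tau\ge0$ controls $\partial e^{\tau}=e^{\tau}\partial\tau$, not $\partial\tau$. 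That is why the estimate actually available, Proposition~\ref{bound} (from \cite[Prop.~12.6(c)]{D85}), reads $e^{\tau\circ\check F}|f_p^sh_j|\in L^0_\phi(X\setminus f_p^{-1}(0))$. Removing the factor $e^{\tau\circ\check F}=e^{V\circ\check F-\phi}$ would need integrability of $e^{-\varepsilon V\circ\check F}$, which nothing in your argument supplies.

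The paper never puts $f_p^sh_j$ into $A_\phi(X)$. It keeps the weight and runs a Siegel-type count:
\begin{itemize}
\item For $P$ of degree $\le k$ in each variable, $\log|P(g,f_1,\dots,f_n)|+k\,\tau\circ\check F+\log|f_p|^2$ extends to a psh function on $X$ by Lemma~\ref{prolong}, with $\phi$-degree $\le Ck+C'$.
\item Proposition~\ref{zero} then caps the vanishing order at $a$ by $C(a)(Ck+C')$.
\item The dimension count $(k+1)^{n+1}>\binom{n+s}{n}$ produces a nonzero $P$ vanishing to order $>s$ at $a$.
\end{itemize}
Together these force $P(g,f_1,\dots,f_n)\equiv0$.

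Step~3 has a second gap. The termination argument of Proposition~\ref{isom} only yields injectivity of $F$ on $X\setminus b_p^{-1}(0)$; it does not give $\bC(b_p,f_1,\dots,f_N)=\cK_\phi(X)$. An element algebraic of degree $\ge2$ over $K(\cA)$ could a priori restrict to a single-valued branch on the image without separating any two points of $X\setminus b_p^{-1}(0)$. Normality of $\cA$ also says nothing about elements outside $K(\cA)$, so "integral, hence already a coordinate" does not apply.

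What closes this is the pluripolarity of the complement of the image (Proposition~\ref{pluripolar}) together with Lemma~\ref{rationalext}. There the bounded function $a_d f$ extends across the closed pluripolar set, and normality plus polynomial growth make it regular; this is exactly the paper's last step.

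A minor point: extending $G_j$ holomorphically across $f_p^{-1}(0)$ with a single exponent $m$ needs a power of $f_p$ to be divisible by the Jacobian of $(f_1,\dots,f_n)$. This comes from the $L^2$ construction of $b_p$ in Proposition~\ref{jet}(b), not merely from the set inclusion $\{\bigwedge df_i=0\}\subset f_p^{-1}(0)$.
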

\begin{prop}\label{quasi-affine}
The pole divisor of $h$ is precisely $\check \cA \setminus \check F(X \setminus f_p^{-1}(0))$. In particular, $\check F(X \setminus f_p^{-1}(0))$ is quasi-affine.
\end{prop}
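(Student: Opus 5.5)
The plan is to prove the two inclusions between the polar set $P$ of $h$ and $E:=\check \cA\setminus \check F\bigl(X\setminus f_p^{-1}(0)\bigr)$ separately. Throughout, write $\Omega:=\check F\bigl(X\setminus f_p^{-1}(0)\bigr)$. Two facts are used repeatedly. First, $\check\cA$ is smooth, since $Q$ vanishes on the singular locus of $\cA$ and $Q(z)w=1$ on $\check\cA$; hence Theorem~\ref{mingrowth} applies. Second, by Proposition~\ref{rational}, $h$ is a rational meromorphic $(1,0)$-form on $\check\cA$, so $P$ is either empty or an algebraic hypersurface of $\check\cA$.

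The inclusion $P\subset E$ is immediate. By construction $h=\partial\tau-\frac{i}{2}u$ is holomorphic on $\Omega$, so $h$ has no poles there.

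For the converse inclusion $E\subset P$, I argue by contradiction. Suppose $a\in E$ and $h$ is holomorphic near $a$. Take a small coordinate ball $B\ni a$ in $\check\cA$ on which $h$ is bounded. The form $u$ from Theorem~\ref{mingrowth} is $C^\infty$ on all of $\check\cA$, so it is bounded on $B$ as well. Hence, on $B\cap\Omega$,
\[
\partial\tau=h+\tfrac{i}{2}u
\]
is bounded. Since $\tau$ is real, $d\tau=\partial\tau+\overline{\partial\tau}$, so $|d\tau|\le C$ on $B\cap\Omega$.

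Next I use that $E$ is pluripolar (Proposition~\ref{pluripolar}), hence closed of Lebesgue measure zero. Fubini applied to the map $(y,y',t)\mapsto(1-t)y+ty'$, which is measure preserving in $(y,y')$ for each fixed $t$, shows that for almost every pair $(y,y')\in B\times B$ the segment $[y,y']$ avoids $E$. Fix a small ball $B_0\Subset B\cap\Omega$ on which $\tau\ge -M_0$. Integrating $|d\tau|\le C$ along such segments gives $\tau\ge -M_0-C\,\mathrm{diam}(B)=:-M$ almost everywhere on $B$.

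Now $\tau$ is psh on $\check\cA$ and equals $-\infty$ on $E$. By the sub-mean value property together with upper semicontinuity, $\tau(a)$ is the limit of the mean values of $\tau$ over shrinking balls centred at $a$. Each of these means is at least $-M$, so $\tau(a)\ge -M$. This contradicts $\tau(a)=-\infty$. Therefore every point of $E$ is a pole of $h$, and $P=E$.

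Since $P$ is an algebraic hypersurface, or empty, in the affine variety $\check\cA$, the set $\Omega=\check\cA\setminus P$ is Zariski open in $\check\cA$, hence quasi-affine.

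The step I expect to be the main obstacle is the connectivity argument passing from the gradient bound to a lower bound on $\tau$ near $a$. A priori $E$ could separate $B$ or force long detours, and the Fubini/segment argument is what avoids needing any structure on $E$ beyond measure zero. A possible subtlety is that $\Omega$ near $a$ might not be dense in $B$; this is excluded because $E$ is pluripolar, hence nowhere dense.
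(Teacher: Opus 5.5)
Your strategy is essentially the paper's: if $h$ were holomorphic near some $a\in E$, then $\partial\tau=h+\frac{i}{2}u$ would extend smoothly across $a$. That forces $\tau$ to be finite near $a$, contradicting $\tau\equiv-\infty$ on $E$. The easy inclusion, the smoothness of $\check\cA$, the bound $|d\tau|\le C$ on $B\cap\Omega$, and the closing sub-mean-value step are all fine.

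The gap is the segment step. Fubini applied to $(y,y',t)\mapsto(1-t)y+ty'$ only shows that, for almost every $(y,y')$, the set $\{t\in[0,1] : (1-t)y+ty'\in E\}$ has one-dimensional measure zero. It does not show that this set is empty. The weaker statement does not help you. You only know $\tau$ is smooth with bounded differential on $B\cap\Omega$, so if a segment meets $E$ even at one parameter (or along a null Cantor set of parameters), you cannot integrate $d\tau$ across it, and nothing rules out a jump. A real hyperplane through $a$ is closed and Lebesgue-null, yet every segment joining its two sides meets it, and such pairs form a set of positive measure. So your closing remark that nothing beyond measure zero is needed is false.

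The conclusion you want is still true for this $E$, but it needs a genuinely stronger thinness property. A pluripolar set is polar in $\mathbb{R}^{2n}$, hence has Hausdorff dimension at most $2n-2$, so $\mathcal{H}^{2n-1}(E)=0$. Fix $y\in B_0$. Since $\mathrm{dist}(y,E)>0$, radial projection from $y$ is Lipschitz on $E$ and maps it onto an $\mathcal{H}^{2n-1}$-null subset of the unit sphere. In polar coordinates about $y$ this gives $[y,y']\cap E=\emptyset$ for almost every $y'$ in the convex ball $B$. Hence $\tau(y')\ge -M_0-C\,\mathrm{diam}(B)$ for a.e. $y'\in B$, and the rest of your argument goes through.

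Alternatively, use the paper's route. Set $v:=h+\frac{i}{2}u$; then $v+\bar v$ is a smooth $1$-form on $B$ that is closed on the dense set $B\setminus E$, hence closed on $B$, so $v+\bar v=d\tau'$ for a smooth $\tau'$ on $B$. Then $\tau-\tau'$ is locally constant on $B\setminus E$. That set is connected because a closed pluripolar set does not disconnect a ball, which again uses thinness rather than measure zero. So $\tau=\tau'+c$ off $E$, and your sub-mean-value argument gives $\tau(a)>-\infty$.
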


First fix one of the $h_j$. Since all entries of $F$ and $f_p$ belong to $A_\phi^0(X)$, and by the proof of Proposition \ref{jet} (b) used in the construction of $b_p$ we have 
\[
|b_p|\cdot |\wedge_{i=1}^n df_i|_\beta^{-1}\in L_\phi^0(X),
\]
it follows from \cite{D85}, Proposition 12.6 (c), that the following holds (note that $b_p$ corresponds to the auxiliary function denoted by $f_{n+1}$ in the proof of \cite{D85}). Analogous to $L_\phi^0(X)$, define
\[
L_\phi^0(X\setminus f_p^{-1}(0)):=\left\{f\;\middle|\; \int_{X\setminus f_p^{-1}(0)} |f|^p e^{-C\phi}\beta^n<\infty \text{ for some } C>0 \text{ and } p>0\right\}.
\]
\begin{prop}\label{bound}
There exists a sufficiently large integer $s>0$ such that for $g:=f_p^s h_j$ we have
\[
e^{\tau\circ\check F}|g|\in L_\phi^0(X\setminus f_p^{-1}(0)).
\]
\end{prop}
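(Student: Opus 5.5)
We would follow Demailly's argument for \cite{D85}, Proposition 12.6(c), and check that each estimate survives the cut-out by $f_p$. On $U:=X\setminus f_p^{-1}(0)$, identified with $\check F(U)$, we have $h=\partial\tau-\frac{i}{2}u$, where $\tau=V\circ\check F-\phi$. We express $h$ in the frame $df_1,\dots,df_n$. By Cramer's rule,
\[
h_j=\frac{h\wedge df_1\wedge\cdots\widehat{df_j}\cdots\wedge df_n}{\bigwedge_{i=1}^n df_i},
\]
which gives the pointwise bound
\[
|h_j|\le |h|_\beta\prod_{i\ne j}|df_i|_\beta\,\bigl|\textstyle\bigwedge_{i=1}^n df_i\bigr|_\beta^{-1}.
\]
By Lemma~\ref{weightphi}(4), each $|df_i|_\beta$ lies in $L_\phi^0(X)$, hence in $L_\phi^0(U)$. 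By the proof of Proposition~\ref{jet}(b), $|b_p|\,|\bigwedge df_i|_\beta^{-1}\in L_\phi^0(X)$. Since $b_p\mid f_p$ and $f_p/b_p$ is a polynomial in the entries of $F$, hence in $A_\phi^0$, the quantity $|f_p|\,|\bigwedge df_i|_\beta^{-1}$ is also in $L_\phi^0$. The power $f_p^s$ is used first to absorb the degeneracy factor $|\bigwedge df_i|^{-1}_\beta$. It remains to control $e^{\tau}|h|_\beta$ after multiplying by further powers of $f_p$.

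For $e^\tau|h|_\beta$ we split $h=\partial\tau-\frac i2 u$ into two terms.

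\emph{The $u$-term.} By Theorem~\ref{mingrowth}(4), $|u|_\omega\le C_2(1+|z|)^{C_3}$. We have $e^\tau\le e^{V}\le (1+|z|)^{C_1}$, and $|z|\circ\check F\le |F|+|f_p|^{-1}$. Comparing $\omega$ with $\beta$ on $U$ gives $|u|_\beta\le |u|_\omega\,|d\check F|_\beta$, where $|d\check F|_\beta\lesssim\sum|df_i|_\beta+|df_p|_\beta|f_p|^{-2}$. So $e^\tau|u|_\beta\le P(|F|,|dF|_\beta)\,|f_p|^{-m}$ for some $m$. Multiplying by $f_p^{m}$ lands this term in $L_\phi^0$, because $L^0_\phi$ is an algebra (Lemma~\ref{weightphi}(2)).

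\emph{The $\partial\tau$-term.} We write $e^\tau\partial\tau=\partial e^\tau$, so it suffices to bound $\partial e^\tau$. Since $\tau$ is only psh, we would follow Demailly and use the $L^2$ gradient estimate for psh functions: for a psh function $w$, integrating $dd^c w^2$ against a cutoff gives $\int\chi|\partial w|^2_\beta\,\cdot\le$ (integrals of $w^2$ and $w\,dd^c w$). We apply this to $w=e^{\tau}$, with a weight $e^{-C\phi}$. Its derivative is controlled by $\beta=e^\phi(\alpha+d\phi\wedge d^c\phi)$, and this is absorbed by enlarging $C$. The result is $|\partial e^\tau|_\beta\in L^2(e^{-C\phi}\beta^n)$ on $U$ up to factors $|f_p|^{-m'}$. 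These factors come from $e^\tau\le(1+|z|)^{C_1}$ and from $dd^c e^\tau\le e^\tau(dd^cV+\dots)$, with $dd^cV=T+\bar\partial u$, and they are again absorbed by powers of $f_p$.

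Combining the two terms with Cramer's bound and choosing $s$ to be the sum of all the exponents gives $e^{\tau\circ\check F}|f_p^s h_j|\in L_\phi^0(U)$, since $L_\phi^0$ is closed under products.

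We expect the main obstacle to be the $\partial\tau$ term: justifying the integration by parts on $U$, whose boundary includes $f_p^{-1}(0)$, where we have no control a priori. We would handle it by using cutoffs of the form $\chi(\phi)\,\chi'(\log|f_p|)$ and verifying that the boundary terms near $f_p=0$ are killed by the extra powers of $f_p$. This is exactly where the fact that $\tau\to-\infty$ along $\check\cA\setminus\check F(U)$ (Proposition~\ref{pluripolar}) should help.
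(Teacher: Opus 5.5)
Your proposal follows the same route as the paper. The paper obtains this statement by invoking \cite{D85}, Proposition~12.6(c), after checking exactly the inputs you isolate: the entries of $F$ and $f_p$ lie in $A_\phi^0(X)$, and $|b_p|\,|\bigwedge_{i=1}^n df_i|_\beta^{-1}\in L_\phi^0(X)$, with $b_p$ playing the role of Demailly's $f_{n+1}$. Your Cramer-rule bound, the splitting $h=\partial\tau-\frac{i}{2}u$, and the absorption of the coordinate $1/f_p$ of $\check F$ by powers of $f_p$ are a sound unpacking of that citation.

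Two small adjustments are needed. First, near $f_p^{-1}(0)$ what matters is the upper bound $\tau\circ\check F\le C_1\log(1/|f_p|)+O(1)$, not Proposition~\ref{pluripolar}, which concerns finite boundary points of $\check F(X\setminus f_p^{-1}(0))$ in $\check\cA$; this bound makes $|f_p|^m e^{\tau\circ\check F}$ extend as a psh function across $f_p^{-1}(0)$ once $m\ge C_1$, so the $\log|f_p|$ cutoff is unnecessary. Second, $|f_p|^m e^{\tau\circ\check F}$ is a priori only in some $L^p_\phi$ with $p$ possibly small, so you should run the gradient estimate on its $q$-th power for small $q>0$ and recover $e^{\tau}|\partial\tau|_\beta$ by H\"older.
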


We also recall the following lemma from \cite[Lemme 12.8]{D85}.

\begin{lem}\label{prolong}
Let $X$ be a complex manifold and $g$ a holomorphic function on $X$. Let $\theta$ be a plurisubharmonic function on $X\setminus g^{-1}(0)$. If $e^\theta$ is locally integrable, then $\theta+\log|g|^2$ extends to a plurisubharmonic function on all of $X$.
\end{lem}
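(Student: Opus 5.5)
The statement is local near $Z:=g^{-1}(0)$, so I will reduce it to showing that $\theta+\log|g|^2$ is locally bounded above near $Z$. Once that bound holds, the classical extension theorem applies. It says that a psh function on the complement of a closed analytic hypersurface which is locally bounded above near the hypersurface extends uniquely to a psh function across it, by taking the upper semicontinuous regularization of the $\limsup$. I will also use the Grauert--Remmert theorem: psh functions on the complement of an analytic subset of codimension $\ge 2$ extend across it with no boundedness assumption. Throughout I assume $g\not\equiv 0$ on each component, since otherwise there is nothing to prove.

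\textbf{Step 1 (reduction to smooth points of $Z$).} Let $Z_{\mathrm{sing}}$ be the singular locus of the reduced hypersurface $Z$; it has codimension $\ge 2$ in $X$. It therefore suffices to extend $\theta+\log|g|^2$ across $Z_{\mathrm{reg}}$ to a psh function on $X\setminus Z_{\mathrm{sing}}$, and then apply Grauert--Remmert to extend across $Z_{\mathrm{sing}}$. Near a point $x_0\in Z_{\mathrm{reg}}$ I choose local coordinates $z=(z',z_n)$ with $Z=\{z_n=0\}$. In these coordinates $g=v\,z_n^k$ for some $k\ge1$ and a nonvanishing holomorphic function $v$.

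\textbf{Step 2 (upper bound via the mean value inequality).} Since $\theta$ is psh, $e^\theta$ is psh on $X\setminus Z$, because $\exp$ is convex and increasing. In particular $e^\theta$ is subharmonic in each variable separately, so it satisfies the sub-mean value inequality on polydiscs contained in $X\setminus Z$.

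Fix a small polydisc neighbourhood $P$ of $x_0$ on which $e^\theta\in L^1(P)$, and a point $z$ close to $x_0$ with $z_n\neq0$. Consider the polydisc
\[
D_z:=\Delta^{n-1}(z',\rho)\times\Delta(z_n,|z_n|/2),
\]
which lies in $P\setminus Z$ for a fixed $\rho>0$. Its volume is comparable to $\rho^{2n-2}|z_n|^2$. The sub-mean value inequality then gives
\[
e^{\theta(z)}\le \frac{1}{\mathrm{vol}(D_z)}\int_{D_z}e^\theta \le C\,|z_n|^{-2}\int_P e^\theta .
\]
Consequently
\[
e^{\theta(z)}|g(z)|^2\le C'|z_n|^{2k-2}\le C'',
\]
using $k\ge1$. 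So $\theta+\log|g|^2$ is bounded above near $x_0$, and Step 1 finishes the proof.

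\textbf{Main obstacle.} The delicate point is the shape of the averaging domain. Using round balls of radius $\operatorname{dist}(z,Z)$ only yields $e^{\theta}\lesssim d^{-2n}$, which the factor $|g|^2\sim d^{2k}$ does not compensate when $n\ge2$. The remedy is the anisotropic polydisc above, which is thin only in the direction normal to $Z$. That choice in turn requires the reduction to smooth points of $Z$ in Step 1, where $g=v\,z_n^k$ and the normal direction is well defined.
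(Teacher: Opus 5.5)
Your argument is correct. The paper does not prove this lemma: it quotes it from \cite[Lemme 12.8]{D85} and uses it as a black box in the proof of Proposition~\ref{rational}. Your proposal therefore supplies a self-contained proof rather than reproducing one from the text.

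The structure you chose is the natural one, since the only real issue is local upper boundedness of $\theta+\log|g|^2$ near $Z=g^{-1}(0)$. At regular points of $Z$ you get this from the sub-mean-value inequality for the psh function $e^\theta$ on the anisotropic polydiscs $\Delta^{n-1}(z',\rho)\times\Delta(z_n,|z_n|/2)$. Their closures avoid $Z$ because $|w_n|\ge |z_n|/2$ there. This gives $e^{\theta}\lesssim |z_n|^{-2}$, which $|g|^2\sim|z_n|^{2k}$ with $k\ge 1$ compensates. The codimension-two set $Z_{\mathrm{sing}}$ is then handled by the Grauert--Remmert extension theorem, which needs no growth hypothesis.

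Your diagnosis is right: round balls of radius $\mathrm{dist}(z,Z)$ lose a factor $d^{2-2n}$. That loss is exactly why the anisotropic choice, and hence the reduction to $Z_{\mathrm{reg}}$, is needed.

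The one step worth writing out is the local factorization $g=v\,z_n^k$ with $v(x_0)\neq 0$. Write $g=z_n^k v$ with $z_n\nmid v$ in the UFD $\mathcal{O}_{x_0}$. If $v(x_0)=0$, then $v^{-1}(0)$ would be a hypersurface germ contained in the irreducible germ $\{z_n=0\}$, hence equal to it. The Nullstellensatz would then give $z_n\mid v$, a contradiction.
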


Let $P(x_0,\cdots,x_n)$ be a polynomial in $n+1$ variables whose degree in each variable is at most $k$, and consider $P(g,f_1,\cdots,f_n)$ for the function $g$ in Proposition \ref{bound}. By the proposition and Lemma \ref{weightphi}, we have
\[
e^{k\tau\circ\check F}P(g,f_1,\cdots,f_n)\in L_\phi^0(X\setminus f_p^{-1}(0)),
\]
hence it is locally integrable. Therefore, by Lemma \ref{prolong},
\[
\log|P(g,f_1,\cdots,f_n)|+k\,\tau\circ\check F+\log|f_p|^2
\]
extends from $X\setminus f_p^{-1}(0)$ to a plurisubharmonic function on all of $X$.

Recall also the definition of the $\phi$-degree $\delta_\phi$ introduced in Definition \ref{phi} and Lemma \ref{weightphi} (3): for plurisubharmonic $V$, if $e^V\in L_\phi^0(X)$, then $\delta_\phi(e^V)<\infty$. Hence $\delta_\phi(e^{\tau\circ\check F}g)<C$. We also note that, by definition of $\delta_\phi$, one has $\delta_\phi(fg) \le \delta_\phi(f) + \delta_\phi(g)$. Therefore, 
\[
\delta_\phi\left(e^{k\tau\circ\check F}|P(g,f_1,\cdots,f_n)||f_p|^2\right)<Ck+C'
\]
for some constant $C'>0$.

We also recall the following results from \cite{D85}, Corollaire 7.3, Lemme 7.4, Proposition 8.3, and Corollaire 8.4.

\begin{prop}\label{zero}
For a plurisubharmonic function $V$ and a holomorphic function $f$, the vanishing order of $f$ at a point $a$, denoted $\mathrm{ord}_a(f)$, satisfies
\[
\mathrm{ord}_a(f)<C(a)\,\delta_\phi(e^V f)
\]
for some constant $C(a)$ depending only on $a$.
\end{prop}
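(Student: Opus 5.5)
The proof combines the Lelong--Jensen formula for the exhaustion $\phi$ with a comparison between Lelong numbers at $a$ and masses of $dd^c u\wedge\alpha^{n-1}$, applied to the single function $u:=V+\log|f|$.

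\textbf{Step 1 (reduction to one psh function).} Set $u:=V+\log|f|$, which is plurisubharmonic. Then $\log_+|e^Vf|=u_+\ge u$, so $\mu_r(u)\le\mu_r(\log_+|e^Vf|)$. The Lelong numbers of $dd^c u$ add, and a psh function has nonnegative Lelong number, so
\[
\mathrm{ord}_a(f)=\nu(dd^c\log|f|,a)\le\nu(dd^cu,a).
\]
It therefore suffices to bound $\nu(dd^cu,a)$ by $C(a)\,\delta_\phi(e^u)$.

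\textbf{Step 2 (Lelong--Jensen formula).} Write $B(t):=\{\phi<t\}$ and $\alpha:=dd^c\phi$. I will use Demailly's Lelong--Jensen formula for the exhaustion $\phi$:
\[
\mu_r(u)-\int_{B(r)}u\,\alpha^n=\int_{-\infty}^r\Bigl(\int_{B(t)}dd^cu\wedge\alpha^{n-1}\Bigr)dt .
\]
This is obtained from Stokes' theorem applied to $d^cu\wedge\alpha^{n-1}$ and $u\,d^c\phi\wedge\alpha^{n-1}$ on the sublevel sets, together with the coarea formula.

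Fix $t_0>\phi(a)$ and a coordinate ball $U\ni a$ with $\overline U\subset B(t_0)$. Since $\phi$ is strictly psh, on $U$ we have $\alpha\ge c\,dd^c|z|^2$ for some $c>0$. The classical Lelong comparison then gives
\[
\int_{B(t)}dd^cu\wedge\alpha^{n-1}\ \ge\ \int_U dd^cu\wedge\alpha^{n-1}\ \ge\ c'(a)\,\nu(dd^cu,a)\qquad\text{for all } t\ge t_0 .
\]
Consequently the right-hand side of the Lelong--Jensen formula is at least $(r-t_0)\,c'(a)\,\nu(dd^cu,a)$.

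\textbf{Step 3 (controlling the volume term), the main obstacle.} We need
\[
\mu_r(u)-\int_{B(r)}u\,\alpha^n\ \le\ \mu_r(u_+)+o(r)\cdot(\text{growth}),
\]
and the difficulty is that $u$ may be very negative on $B(r)$. My first attempt would be to replace the quantity by one that is monotone in $r$. Differentiating the formula in $r$ shows that
\[
\frac{d}{dr}\Bigl(\mu_r(u)-\int_{B(r)}u\,\alpha^n\Bigr)=\int_{B(r)}dd^cu\wedge\alpha^{n-1}
\]
is nondecreasing in $r$. Hence this left-hand side is convex in $r$, and by Step 2 it is eventually bounded below by a linear function of slope $c'(a)\nu$.

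The remaining task is the upper bound. For it I would use the finite volume $\int_X\alpha^n<\infty$ together with the coarea identity $\int_{B(r)}u\,\alpha^n=\int_{-\infty}^r(\cdots)\,dt$, which expresses the volume term as an average of $\mu_t(u)$ over $t<r$, each controlled by $\mu_t(u_+)$. If the negative part of this average still causes trouble, I would fall back on Demailly's route through Proposition 8.3. That route works with the quantity $\mu_r(u)-\mu_{r_0}(u)$, for which the volume term enters only as $\int_{r_0<\phi<r}u\,\alpha^n$. One then bounds $\mu_r(u)\le\mu_r(u_+)$ and uses the upper-semicontinuity bound $\sup_{B(r_0)}u<\infty$ to fix $\mu_{r_0}(u)$ as a finite constant.

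\textbf{Step 4 (conclusion).} Divide by $r$ and let $r\to\infty$. This gives
\[
c'(a)\,\nu(dd^cu,a)\le\limsup_{r\to\infty}\frac1r\mu_r(u_+)=\delta_\phi(e^Vf).
\]
Together with Step 1 this yields $\mathrm{ord}_a(f)\le C(a)\,\delta_\phi(e^Vf)$ with $C(a)=1/c'(a)$, which depends only on $a$ through $t_0$, $U$ and $c$. The strict inequality in the statement then follows by enlarging $C(a)$ slightly, at least when $\delta_\phi(e^Vf)>0$. When $\delta_\phi(e^Vf)=0$ the bound forces $\mathrm{ord}_a(f)=0$, so the inequality is not strict; in that case I would read the statement as the non-strict bound.
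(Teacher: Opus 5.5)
Your architecture is the Demailly route that the paper only cites (Cor.~7.3, Lemme~7.4, Prop.~8.3, Cor.~8.4): reduce to $u:=V+\log|f|$, apply the Lelong--Jensen formula for $\phi$, and use the local comparison $\nu(dd^cu,a)\le C(a)\int_U dd^cu\wedge\alpha^{n-1}$. Steps 1, 2 and 4 are fine. Your remark that the strict inequality fails when $\delta_\phi(e^Vf)=0$ is also right, and only $\le$ is used later.

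Step 3, however, is left open, and it is the crux. The volume term enters with a minus sign, so you need a \emph{lower} bound on $\int_{B(r)}u\,\alpha^n$, i.e.\ control of $\int_{B(r)}\max(-u,0)\,\alpha^n$.

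Your first idea fails on two counts. Coarea converts $\int_{B(r)}u\,d\phi\wedge d^c\phi\wedge\alpha^{n-1}$ into $\int^r\mu_t(u)\,dt$, but $\alpha^n=dd^c\phi\wedge\alpha^{n-1}$ is a different measure, so the identity you invoke does not exist. Even if it did, $\mu_t(u)\le\mu_t(u_+)$ bounds the volume term from the wrong side.

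The fallback does not touch the problem. After bounding $\mu_r(u)\le\mu_r(u_+)$ and freezing $\mu_{r_0}(u)$, the shell term $-\int_{\{r_0<\phi<r\}}u\,\alpha^n$ is still there. Nothing you wrote shows it is $o(r)$: $u$ is $-\infty$ on $f^{-1}(0)$ and has no a priori lower bound at infinity. (Also, finiteness of $\mu_{r_0}(u)$ comes from Lelong--Jensen itself, not from upper semicontinuity, which only bounds it from above.)

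The missing idea is a truncation, and this is exactly where finite Monge--Amp\`ere volume, a standing hypothesis in the paper, is used. Put $u_k:=\max(u,-k)$. Then $u_k\le u_+$ and $-\int_{B(r)}u_k\,\alpha^n\le k\,\Vol(X)$ uniformly in $r$. Lelong--Jensen for $u_k$, together with monotonicity of $t\mapsto\int_{B(t)}dd^cu_k\wedge\alpha^{n-1}$, gives
\[
(r-t_0)\int_U dd^cu_k\wedge\alpha^{n-1}\;\le\;\mu_r(u_+)+k\,\Vol(X)\qquad(r>t_0).
\]
Hence $\int_U dd^cu_k\wedge\alpha^{n-1}\le\delta_\phi(e^Vf)$ for every fixed $k$.

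You cannot apply the Lelong comparison to $u_k$ itself, since its Lelong numbers vanish. Instead let $k\to\infty$. Since $u_k\downarrow u$ in $L^1_\loc$, we get $dd^cu_k\wedge\alpha^{n-1}\to dd^cu\wedge\alpha^{n-1}$ weakly as positive measures. Lower semicontinuity of mass on the open set $U$ then gives $\int_U dd^cu\wedge\alpha^{n-1}\le\delta_\phi(e^Vf)$. Your Step 2 comparison now yields $\mathrm{ord}_a(f)\le\nu(dd^cu,a)\le C(a)\,\delta_\phi(e^Vf)$.
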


We now show that there exists a polynomial $P$ such that
\[
P(g,f_1,\cdots,f_n)=0.
\]
Suppose that no such polynomial exists. Let $\mathbb{P}$ be the vector space of polynomials in $n+1$ variables whose degree in each variable is at most $k$. Define a linear map
\[
J_s:\mathbb{P}\longrightarrow \mathbb{C}^{\binom{n+s}{n}}
\]
by sending $P$ to the tuple consisting of all jets of $P(g,f_1,\cdots,f_n)$ at the point $a$ of order $\le s$.

If
\begin{equation}\label{dimension}
\dim_{\mathbb{C}}\mathbb{P}=(k+1)^{n+1}>\binom{n+s}{n},
\end{equation}
then $\ker J_s\neq 0$. For $P\in\ker J_s$ we have
\[
\mathrm{ord}_a(P(g,f_1,\cdots,f_n))>s.
\]

However, for $a\in X\setminus f_p^{-1}(0)$, by Proposition \ref{zero} we must have
\[
s<\mathrm{ord}_a(P(g,f_1,\cdots,f_n))
<
C(a)\,\delta_\phi\!\left(e^{k\tau\circ\check F}P(g,f_1,\cdots,f_n)|f_p|^2\right)
<
C(a)(Ck+C').
\]

Since
\[
\binom{n+s}{n}\le \frac{(n+s)^n}{n!},
\]
for sufficiently large $k$ we can choose $s$ large enough so that $s>C(a)(Ck+C')$ while still satisfying \eqref{dimension}. This yields a contradiction. Hence for such an $s$ any $P\in\ker J_s$ must satisfy
\[
P(g,f_1,\cdots,f_n)=0.
\]

Finally, we use the following lemma to conclude that $g:=f_p^s h_j$ is a rational function on $\cA$, and hence $h_j$ is also rational.

\begin{lem}\label{rationalext}
Let $\cA$ be a normal affine variety and $\Omega$ a domain such that $\cA\setminus\Omega$ is a closed pluripolar set. Suppose $f\in\mathcal{O}(\Omega)$ is algebraic over $K(\cA)$, i.e.\ there exists a polynomial $P$ with coefficients in $K(\cA)$ such that $P(f)=0$. Then $f$ extends to a rational function on $\cA$.
\end{lem}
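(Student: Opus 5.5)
We prove Lemma~\ref{rationalext} by viewing $f$ as a root of its minimal polynomial, showing the corresponding branched cover of $\cA$ is trivial, and then invoking normality. The argument has four steps.

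\textbf{Step 1: the minimal polynomial and its discriminant.} Replace $P$ by the minimal polynomial of $f$ over $K(\cA)$, of degree $d$. Clearing denominators, write
\[
P(T)=a_d T^d+\dots+a_0,\qquad a_i\in\bC[\cA],\quad a_d\not\equiv 0.
\]
Let $D\subset\cA$ be the proper Zariski closed set where $a_d$ or the discriminant of $P$ vanishes, together with the singular locus of $\cA$. On $\cA\setminus D$, the equation $P(x,T)=0$ defines a finite étale cover $Y\to\cA\setminus D$ of degree $d$. Here $Y$ is the normalization of $\cA$ in the field $K(\cA)[T]/(P)$, restricted to $\cA\setminus D$.

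\textbf{Step 2: the graph of $f$ is a component of the cover.} On $\Omega\setminus D$, the map $x\mapsto(x,f(x))$ is a holomorphic section of $Y$. Since $\cA\setminus\Omega$ is closed and pluripolar and $D$ is a proper analytic subset, the set $\Omega\setminus D$ is connected and dense in $\cA\setminus D$. Moreover, a closed pluripolar set does not disconnect a connected manifold, so $(\cA\setminus D)\setminus(\cA\setminus\Omega)$ is connected. The closure in $Y$ of the graph of $f|_{\Omega\setminus D}$ is then an analytic set. To see this, note that locally over a small ball $B\subset\cA\setminus D$ the cover splits into $d$ sheets $\sigma_1,\dots,\sigma_d$. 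On the connected set $B\cap\Omega$ (connected because pluripolar sets do not disconnect $B$), $f$ agrees with exactly one $\sigma_i$ by the identity theorem. Hence $f|_{B\cap\Omega}$ extends holomorphically to $B$ as $\sigma_i$. These local extensions glue, by density of $\Omega$, to a single-valued holomorphic section of $Y$ over $\cA\setminus D$. Its image is an open-and-closed subset of $Y$, hence a union of connected components. Because $P$ is irreducible over $K(\cA)$, the variety $Y$ is irreducible, hence connected. A section of a connected degree-$d$ cover is surjective onto $Y$ only if $d=1$. Therefore $d=1$, i.e.\ $f=-a_0/a_1\in K(\cA)$ on $\Omega\setminus D$.

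\textbf{Step 3: agreement on $\Omega$.} By continuity, $f$ coincides with the rational function $-a_0/a_1$ on all of $\Omega$ away from the polar set of $-a_0/a_1$. Consequently $f$ is the restriction of a rational function, which is what was to be proved. If one also wants regularity at points of $\Omega$, normality of $\cA$ (Hartogs/Riemann extension) shows that this rational function is regular wherever $f$ is holomorphic.

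\textbf{Main obstacle.} The delicate point is the local splitting argument in Step 2. One must use that a closed pluripolar subset of a ball does not disconnect it; this is true because such a set has Lebesgue measure zero and zero capacity, hence Hausdorff codimension at least $2$ in the relevant sense. One must also ensure the global section obtained is single-valued. The first thing I would check is that the monodromy around loops avoiding $\cA\setminus\Omega$ is trivial. This holds because any loop in $\cA\setminus D$ can be perturbed into $\Omega\setminus D$, again by non-separation by pluripolar sets, so the monodromy of the extended section is trivial.
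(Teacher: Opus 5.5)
Your proof is correct, but it takes a different route from the paper's.

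The paper's argument is analytic:
\begin{itemize}
\item It multiplies by the leading coefficient so that $a_d f$ satisfies a monic relation over $R(\cA)$.
\item The Cauchy root bound then gives $|a_d f|\le C_1(1+|z|)^{C_2}$ on $\Omega$.
\item It extends $a_d f$ across the closed pluripolar set by the Riemann extension theorem for locally bounded holomorphic functions, and then across the singular locus by normality of $\cA$.
\item Finally it invokes the theorem that a holomorphic function of polynomial growth on an affine variety is the restriction of a polynomial, so $a_d f\in R(\cA)$.
\end{itemize}

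Your argument is topological and Galois-theoretic instead. You pass to the minimal polynomial and remove the locus $D$ where the leading coefficient or the discriminant vanishes, together with the singular locus. The roots then form a finite covering $Y\to\cA\setminus D$, which is connected because $P$ is irreducible over $K(\cA)$. Then $f$ extends to a section over $\cA\setminus D$, and a section of a connected degree-$d$ covering forces $d=1$.

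What your route buys:
\begin{itemize}
\item It needs neither normality nor the fairly deep polynomial-growth characterization of regular functions.
\item It uses only two properties of $\cA\setminus\Omega$: it has empty interior, and it does not disconnect small balls. So it applies to a wider class of exceptional sets.
\end{itemize}
What the paper's route buys: it is shorter modulo the cited results, and it stays within the growth-estimate framework used throughout the paper. It also directly produces the regular function $a_d f$ on all of $\cA$, which is the form in which the lemma is used later in the paper.

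Two small remarks on your write-up:
\begin{itemize}
\item The monodromy check in your last paragraph is unnecessary. On each small ball $B$, the extension is uniquely determined by $f$ on the dense set $B\cap\Omega$, so the local extensions glue automatically.
\item The fact that a closed pluripolar $E$ does not disconnect a ball $B$ is most cleanly justified by the same Riemann extension theorem, rather than by capacity and Hausdorff-dimension heuristics. Suppose $B\setminus E=U_1\sqcup U_2$ with both pieces nonempty and open. Take the bounded holomorphic function equal to $0$ on $U_1$ and $1$ on $U_2$. Its holomorphic extension to $B$ would be a continuous $\{0,1\}$-valued function on the connected set $B$ that is nonconstant, which is a contradiction.
\end{itemize}
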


\begin{proof}
Since $f$ is algebraic over $K(\cA)$, multiplying by a suitable regular function we obtain
\[
\sum_{k=0}^{d} a_k f^k=0,\qquad a_k\in R(\cA),\; a_d\neq 0.
\]
Hence the function $g=a_d f$ satisfies the monic polynomial equation with coefficients in $R(\cA)$
\[
g^d+\sum_{k=0}^{d-1} a_d^{\,d-1-k}a_k g^k=0.
\]
Fix an affine embedding $\cA\subset \bC^N$, and let $z$ denote the standard coordinates on $\bC^N$. Then, by the Cauchy root bound applied to the above polynomial relation, there exist constants $C_1,C_2>0$ such that
\[
|a_d f|\le C_1(1+|z|)^{C_2}.
\]
In particular, $a_d f$ is locally bounded on $\Omega$. Since a locally bounded holomorphic function extends holomorphically across a closed pluripolar set, $a_d f$ extends holomorphically to $\cA_{\mathrm{reg}}$. By the normality of $\cA$, it then extends holomorphically to all of $\cA$.

Finally, any holomorphic function on an affine variety $\cA\subset \bC^N$ satisfying such a polynomial growth bound extends to a polynomial on $\bC^N$ \cite[Introduction]{B74}. Therefore $a_d f\in R(\cA)$, and hence $f\in K(\cA)$.
\end{proof}
Now we prove Proposition~\ref{quasi-affine}. Let $\Omega \subset \check \cA$ be the maximal domain where $h$ is holomorphic. First, $\check F(X \setminus f_p^{-1}(0)) \subset \Omega$ and recall that $\check \cA\setminus \check F(X \setminus f_p^{-1}(0))$ is pluripolar (Proposition \ref{pluripolar}). For $b \in \Omega$, there exist $a \in \check F(X \setminus f_p^{-1}(0))$ and a path $\gamma$ from $a$ to $b$ in $\Omega$ such that a simply connected tubular neighborhood $\Gamma \subset \Omega$ of $\gamma$ exists. Since $h=\partial \tau - \frac{i}{2}u$ and $u$ is smooth on $\check \cA$, setting $v:=h+\frac{i}{2}u$ gives a smooth form on $\Omega$ satisfying $v=\partial \tau$ on $\check F(X \setminus f_p^{-1}(0))$. On $\check F(X \setminus f_p^{-1}(0))$ we have $d(v+\bar v)=d(\partial \tau+\bar\partial \tau)=d\circ d \tau=0$, hence the same holds on $\Gamma$. Because $\Gamma$ is simply connected, there exists $\tau'$ with $d\tau'=v+\bar v$ on $\Gamma$, and $\tau'=\tau$ on $\check F(X \setminus f_p^{-1}(0))$. Thus $\tau$ extends to a finite-valued plurisubharmonic function on $\Omega$. However, as mentioned in the proof of Proposition~\ref{pluripolar}, $\tau$ must be $-\infty$ outside $\check F(X \setminus f_p^{-1}(0))$. Therefore $\Omega=\check F(X \setminus f_p^{-1}(0))$.

Finally, we prove the following.
\begin{prop}\label{complement}
$\cA \setminus F(X \setminus b_p^{-1}(0))$ is an algebraic hypersurface of $\cA$.
\end{prop}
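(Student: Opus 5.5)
Our plan is to transfer the quasi-affineness of $\check F(X\setminus f_p^{-1}(0))$ (Proposition~\ref{quasi-affine}) back to $F(X\setminus b_p^{-1}(0))\subset\cA$, using the relation between $\check\cA$ and $\cA$. The projection $\pi:\check\cA\to\cA$, $(z,w)\mapsto z$, is an isomorphism onto the principal open set $\cA_Q:=\{Q\neq 0\}\subset\cA$. It satisfies $\pi\circ\check F=F$ on $X\setminus f_p^{-1}(0)$. Hence Proposition~\ref{quasi-affine} says that
\[
\cA_Q\setminus F(X\setminus f_p^{-1}(0))=\pi\bigl(\check\cA\setminus\check F(X\setminus f_p^{-1}(0))\bigr)
\]
is the pole divisor $D$ of the rational form $\pi_*h$. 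The first step is therefore to record that $\cA\setminus F(X\setminus f_p^{-1}(0))=\overline{D}\cup\{Q=0\}$ is a hypersurface. For this we need $F(X\setminus f_p^{-1}(0))\subset\cA_Q$, which holds by definition, and that $F$ does not reach $\{Q=0\}$ from outside $f_p^{-1}(0)$, which holds since $f_p=Q\circ F$.

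The second step removes the dependence on the choice of $Q$. Note that $F(X\setminus b_p^{-1}(0))=\bigcup_Q F(X\setminus (Q\circ F)^{-1}(0))$, where $Q$ ranges over polynomials vanishing on $\mathrm{Sing}\,\cA$ and divisible by $z_0$. Given any $q\in X\setminus b_p^{-1}(0)$, we will choose such a $Q$ with $Q(F(q))\neq 0$. This is possible because $F(q)$ is a smooth point of $\cA$ with $z_0(F(q))=b_p(q)\neq0$; the normalization $Q\circ F(p)=1$ can be replaced by $Q\circ F(q)\neq 0$, since the construction never really used the basepoint $p$. Running Propositions~\ref{rational}--\ref{quasi-affine} for this $Q$, we get that locally near $\{Q\neq0\}$ the complement $\cA\setminus F(X\setminus b_p^{-1}(0))$ agrees with a hypersurface $D_Q$. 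We claim it equals $\{z_0=0\}\cup\mathrm{Sing}\,\cA\cup\overline{D_Q}$ on all of $\cA$. Indeed, $F(X\setminus b_p^{-1}(0))\subset\{z_0\neq0\}$, and $F$ is an open embedding, so $\cA\cap\{z_0\neq 0\}\cap\{Q\neq 0\}\setminus F(X\setminus b_p^{-1}(0))=D_Q\cap\{z_0\neq0\}$. Since finitely many $Q$'s cover $\{z_0\neq0\}\cap\mathrm{Reg}\,\cA$ (by Noetherianity), the complement is $\{z_0=0\}\cup\bigcup_Q(\overline{D_Q}\cap\{z_0\ne 0\})\cup(\mathrm{Sing}\,\cA\cap\{z_0\neq0\})$.

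The main obstacle is the singular locus. We must show that $\mathrm{Sing}\,\cA\cap\{z_0\neq 0\}$, which has codimension $\ge2$ by normality, is either disjoint from $F(X\setminus b_p^{-1}(0))$ or already contained in the closure of the hypersurfaces. Since $F(X\setminus b_p^{-1}(0))$ is a smooth open subset (Proposition~\ref{isom}), it avoids $\mathrm{Sing}\,\cA$. So the complement contains $\mathrm{Sing}\,\cA\cap\{z_0\ne0\}$, and we need this set to lie in a divisor. I would argue by contradiction. Suppose a point of $\mathrm{Sing}\,\cA$ has a neighborhood $W$ with $W\setminus\mathrm{Sing}\,\cA\subset F(X\setminus b_p^{-1}(0))$, up to the hypersurfaces. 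Then the function $\phi\circ F^{-1}$ extends across a codimension-$2$ set. Alternatively, the psh function $\tau$, which tends to $-\infty$ on the complement, is locally bounded above there and hence extends as a finite psh function by normality (Riemann extension for psh functions across analytic sets of codimension $\ge2$). But $\phi\circ F^{-1}$ would then stay bounded near a point not in the image, contradicting that $\phi$ is an exhaustion and $F$ is proper onto its image in $\{z_0\ne0\}$. This properness holds because $1/b_p$ and the $f_i$ control $\phi$ via the construction. Consequently the complement is a union of the divisor $\{z_0=0\}$ with the closures of the $D_Q$, which is a hypersurface.
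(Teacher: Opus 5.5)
Your argument is correct. Its first half matches the paper. You vary $Q$; the normalization $Q\circ F(p)=1$ is inessential, since $b_p$, $F$ and $\cA$ stay fixed. The sets $\{Q\neq 0\}$ then cover $F(X\setminus b_p^{-1}(0))$, and Proposition~\ref{quasi-affine} gives Zariski openness.

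The difference is in how you get pure codimension one. The paper takes only Zariski openness from Proposition~\ref{quasi-affine} and then argues once, globally. A point of $\cA\setminus F(X\setminus b_p^{-1}(0))$ lying only on components of codimension $\ge 2$ would, by Hartogs extension of holomorphic functions on the normal space $\cA$, lie in the holomorphic hull of the Stein set $F(X\setminus b_p^{-1}(0))$, which is a contradiction.

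You instead use that each $D_Q$ is a genuine divisor on the smooth variety $\check\cA\cong\{Q\neq 0\}$. This confines the possible trouble to $\mathrm{Sing}\,\cA\cap\{z_0\neq 0\}$. You then run the plurisubharmonic analogue: $\phi\circ F^{-1}$ extends across a codimension-$\ge 2$ analytic set of a normal space (Grauert--Remmert). Hence it is locally bounded above there, although it tends to $+\infty$. This replaces holomorphic convexity by the exhaustion $\phi$ itself. The price is invoking the psh extension theorem on normal spaces and the pure-codimension content of the pole divisor. Note that your $\phi$-argument works at any point of the complement off $\{z_0=0\}$ that lies on no codimension-one component. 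So, once Zariski openness is known, your decomposition isolating $\mathrm{Sing}\,\cA$ is not actually needed.

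Two sentences in your last paragraph should be repaired.

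The alternative via $\tau$ fails. $\tau$ is defined only on $\check\cA\cong\{Q\neq 0\}$, which misses $\mathrm{Sing}\,\cA$ entirely. Moreover, a psh function can perfectly well tend to $-\infty$ along a codimension-two set (e.g.\ $\log|z|^2$ at $0\in\bC^2$), so its extension yields no contradiction.

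The divergence of $\phi\circ F^{-1}$ at $y_0$ needs no ``control'' of $\phi$ by $1/b_p$ and the $f_i$. Suppose $F(x_k)\to y_0$ with $z_0(y_0)\neq 0$ and $(x_k)$ had a limit point $x\in X$. Then $b_p(x)=z_0(y_0)\neq 0$ and $F(x)=y_0$, contradicting $y_0\notin F(X\setminus b_p^{-1}(0))$. So $x_k$ leaves every compact subset of $X$, and $\phi(x_k)\to\infty$ simply because $\phi$ is an exhaustion.
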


\begin{proof}
First note that $\check \cA$ is a Zariski open subset of $\cA$. Moreover, $\check F(X \setminus f_p^{-1}(0))$ is a Zariski open subset of $\check \cA$ by Proposition~\ref{quasi-affine}. Since, for every $x\in X\setminus b_p^{-1}(0)$, we may choose $f_p$ such that $f_p(x)\neq 0$, it follows that $F(X\setminus b_p^{-1}(0))$ corresponds to a Zariski open subset of $\cA$.

Recall from Proposition~\ref{isom} that $\cA$ is normal and that $F(X\setminus b_p^{-1}(0))$ is Stein. Suppose that a point of the Zariski closed set $\cA\setminus F(X\setminus b_p^{-1}(0))$ has codimension $\ge 2$ in $\cA$. Then, by the Hartogs extension theorem for normal complex spaces, such a point must belong to the holomorphic convex hull of $F(X\setminus b_p^{-1}(0))$, which contradicts the Stein property. Therefore $\cA\setminus F(X\setminus b_p^{-1}(0))$ is a purely codimension~$1$ Zariski closed set.
\end{proof}

\subsection{$X \setminus b_p^{-1}(0)$ is affine}
By Proposition \ref{complement}, we may apply the following theorem of Mok \cite{M84} to $F(X\setminus b_p^{-1}(0))$.
\begin{thm}\label{Mok}
Let $\cA \subset \mathbb{C}^N$ be an $n$-dimensional affine variety not necessarily smooth, and let $H$ be an algebraic hypersurface of $\cA$. Suppose that $\cA \setminus H$ is rationally convex; that is, for any compact $K \subset \cA \setminus H$, the rational convex hull
\[
\widehat{K}:=\{x\in \cA \setminus H \mid |g(x)| \le \sup_K |g| \text{ for all } g \in R(\cA \setminus H)\}
\]
is also compact. Then $\cA \setminus H$ is an affine variety.
\end{thm}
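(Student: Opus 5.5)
The plan is to exhibit $\cA\setminus H$ as the normalization of a closed algebraic subvariety of some $\bC^{N+m}$. Concretely, we look for finitely many $g_1,\dots,g_m\in R(\cA\setminus H)$ such that
\[
\Phi:=(z,g_1,\dots,g_m):\cA\setminus H\longrightarrow \bC^{N+m}
\]
is a proper map onto its image. Here $z$ denotes the ambient coordinates of $\cA\subset\bC^N$.

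\textbf{Step 1.} Every $g\in R(\cA\setminus H)$ is a rational function on $\cA$ whose poles lie in $H$. Hence $\Phi$ is an injective algebraic morphism, since the coordinates $z$ already separate points. Its image is a constructible set, and its Zariski closure $\cA'$ is birational to $\cA$.

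\textbf{Step 2 (key step).} Use rational convexity to get properness. Let $x_k\in\cA\setminus H$ be a sequence with $|z(x_k)|$ bounded that converges to a point of $H$. Take an exhaustion of $\cA\setminus H$ by compact sets $K_j$. The hulls $\widehat{K_j}$ are compact, so for $k$ large $x_k\notin\widehat{K_j}$. Hence some $g\in R(\cA\setminus H)$ satisfies $|g(x_k)|>\sup_{K_j}|g|$. This shows that, pointwise, regular functions on $\cA\setminus H$ become unbounded as one approaches $H$.

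\textbf{Step 3.} The main obstacle is to pass from such point-dependent functions $g$ to a finite collection $g_1,\dots,g_m$ that works uniformly. I would run a Noetherian argument. For a finite tuple $G=(g_1,\dots,g_m)$, let $B_G\subset H$ be the set of points $y\in H$ at which $\Phi_G$ fails to be proper, i.e.\ there is a sequence in $\cA\setminus H$ tending to $y$ along which $\Phi_G$ stays bounded. Equivalently, $B_G$ is described by the image of $H$ under the correspondence given by the closure of the graph of $\Phi_G$ in $\cA\times\mathbb{P}^m$. The set $B_G$ is Zariski closed in $H$, and it decreases as $G$ is enlarged. By Noetherianity the decreasing chain stabilizes at some $B_{G_0}$.

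If $B_{G_0}\neq\emptyset$, choose a generic point $y\in B_{G_0}$ and a sequence $x_k\to y$ along which $\Phi_{G_0}$ stays bounded. Step 2 produces some $g$ that is unbounded along a subsequence of $(x_k)$. I expect the delicate part to be arranging the sequence so that $g$ is unbounded along a whole analytic family of approach paths near $y$, so that adding $g$ strictly shrinks $B_{G_0}$. This contradicts stabilization, so $B_{G_0}=\emptyset$ and $\Phi_{G_0}$ is proper.

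\textbf{Step 4.} A proper map has closed image, so $\Phi(\cA\setminus H)$ is closed in the Euclidean topology. A constructible set that is Euclidean-closed is Zariski closed, so this image equals $\cA'$ and is an affine variety. The map $\cA\setminus H\to\cA'$ is then a bijective, proper, birational morphism, hence finite. Since $R(\cA\setminus H)$ is integrally closed in its fraction field, $\cA\setminus H$ is identified with the normalization of $\cA'$, after first reducing to the case where $\cA$ is normal. The normalization of an affine variety is affine, so $\cA\setminus H$ is affine.
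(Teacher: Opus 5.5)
The paper does not prove this statement. It quotes it from Mok \cite{M84} and uses it as a black box in Proposition~\ref{main1} and in part (iv) of Theorem~\ref{main}, so your proposal has to stand on its own. Its outer shell is fine. Because the coordinates $z$ are among the components of $\Phi$, properness of $\Phi$ is equivalent to closedness of its image, and Step~4 then goes through for normal $\cA$. For non-normal $\cA$, $R(\cA\setminus H)$ is not integrally closed, so your reduction needs two further facts: the complement of $\nu^{-1}(H)$ in the normalization is still rationally convex, and (Chevalley) a variety receiving a finite surjective morphism from an affine one is affine. The entire content of the theorem, however, is the existence of finitely many regular functions making $\Phi$ proper. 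For normal $\cA$ this is essentially finite generation of $R(\cA\setminus H)$, which can fail for hypersurface complements in general. Steps~2--3 do not establish it.

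Step~2 only restates the hypothesis. For each $k$ it gives some $g_{j,k}$, depending on $k$, with $|g_{j,k}(x_k)|>\sup_{K_j}|g_{j,k}|$. It does not give a single $g$ unbounded along a subsequence of $(x_k)$, and the Stein trick of combining such functions into one series is unavailable because infinite sums leave $R(\cA\setminus H)$.

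Step~3 rests on the claim that $B_G$ is Zariski closed, which is false. $B_G$ is the projection to $H$ of $\overline{\Gamma_G}\cap(H\times\bC^m)$, where $\Gamma_G$ is the graph of $G$ in $\cA\times\bC^m$. The projection is along the non-proper factor $\bC^m$, so $B_G$ is only constructible. For example, take $\bC^3$ with coordinates $(a,b,v)$, $H=\{v=0\}$ and $G=\bigl(a/v,\,(ab-v)/v^2\bigr)$. Approaching $(0,b_0,0)$ with $b_0\neq 0$ along $a=v/b_0$ keeps $G$ bounded. Near the origin, $|a|\le C|v|$ forces $|ab-v|\ge |v|/2$, hence $|(ab-v)/v^2|\to\infty$. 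Thus $B_G=\{a=v=0,\ b\neq 0\}$, which is not closed.

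Descending chains of constructible sets need not stabilize, so the Noetherian argument only controls the closures $\overline{B_G}$. To make progress you would then need a $g$ that is unbounded along \emph{every} approach to a Zariski-dense set of points of some component of $\overline{B_{G_0}}$. A $g$ unbounded along one sequence does not suffice, since $y$ may remain in $B_{G_0\cup\{g\}}$ via a different sequence. This uniform statement is exactly the ``delicate part'' you leave open. It is where rational convexity must be used quantitatively, for a fixed compact $K$ and all of $R(\cA\setminus H)$ at once. As written, the proposal does not prove the theorem.
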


To show that $F(X \setminus b_p^{-1}(0))$ is rationally convex, we first prove the following.

\begin{prop}\label{dense}
The following statements hold.
\begin{enumerate}
    \item $A_\phi^0(X)\!\left[\frac{1}{b_p}\right]$ is dense in $\cO(X)$ with respect to the topology of uniform convergence on compact subsets $K \subset X \setminus b_p^{-1}(0)$.
    \item If $S':=\{x~|~e^{-\psi}\notin L^1_\loc(x)\}=\emptyset$, then $A_\phi^0(X)$ is dense in $\cO(X)$ with respect to the topology of uniform convergence on compact subsets $K \subset X$.
\end{enumerate}
 
\end{prop}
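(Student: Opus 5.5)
The plan is the standard Hörmander-type approximation argument, as in \cite[Proposition 11.5]{D85}, carried out with the weight modified so that $S$ is harmless once we divide by a power of $b_p$.

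Fix $f\in\cO(X)$ and a compact $K\subset X\setminus b_p^{-1}(0)$ (respectively $K\subset X$ in part 2). First I would enlarge $K$ to a sublevel set. By the Stein property, replace $K$ by its $\cO(X)$-hull, which lies in a sublevel set $\{\phi<c\}$. Choose a cutoff $\chi$ equal to $1$ near $\{\phi\le c\}$ and supported in $\{\phi<c+1\}$. Put $u:=\bar\partial(\chi f)$, which is supported in the shell $\{c\le\phi\le c+1\}$.

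Next comes the weight. Take
\[
\tau_m:=\psi+m\,(\phi-c)_++\eta+2\log(1+e^\phi),
\]
where $\eta$ is a psh correction, and apply Lemma~\ref{skoda} with $\lambda=(1+e^\phi)^{-2}$ exactly as in the proof of Proposition~\ref{jet}(a). Replace $(\phi-c)_+$ by a smooth convex increasing function of $\phi$ vanishing on $\{\phi\le c\}$, so that $\idd\tau_m+\Ric(\beta)\ge\lambda\beta$ still holds.

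The difficulty for part 1 is that $u$ may meet $S'$, where $e^{-\psi}$ is not integrable, so the right-hand side of the $L^2$ estimate could diverge. I would handle this by solving instead for $\bar\partial g=b_p^{N}u$ with the weight $\tau_m$. Since $b_p$ vanishes on $S$ (Remark~\ref{vanish}), and $S=V(\mathcal J(\psi))$ by construction, a high enough power of $b_p$ lies in $\mathcal J(\psi)$ on the compact shell (Nullstellensatz for coherent ideals). Hence $|b_p^N u|^2e^{-\psi}$ is integrable. This is the step I expect to be the main obstacle. One needs $b_p^N\in\mathcal J(\psi)$ uniformly on the shell, which follows from Rückert's Nullstellensatz locally and compactness. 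There is also a mismatch between $e^{-\psi}$ and $e^{-2\psi}$, so I would rather take weight $2\psi$, using $2\Ric(\beta)+dd^c\psi\cdot$const, as in Proposition~\ref{jet}(b), to land exactly in $\mathcal J(\psi)$.

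Then $F_m:=\chi b_p^Nf-g_m$ is holomorphic, and it lies in $A_\phi^0(X)$ because $e^{-\tau_m}\ge e^{-(A'\phi+B')}$ for fixed $m$. Since $\tau_m=\psi+\eta+O(1)$ on $\{\phi<c\}$ and independent of $m$, while $e^{-m(\phi-c)_+}$ is tiny on the support of $u$ where $\phi\ge c$, it is tiny compared with $e^{m\cdot 0}$ on $K$. More carefully, as in Demailly's argument, I would let $\phi$ be replaced by $\phi-c$ with a shift $c'$ slightly less than $c$ chosen so $u$ lives where the weight gains $e^{-m\delta}$. This gives $\|g_m\|_{L^2(\{\phi<c'\},e^{-\psi-\eta})}\le Ce^{-m\delta}\to0$. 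Because $g_m$ is holomorphic on $\{\phi<c'\}$ (where $\chi\equiv1$), the mean value inequality turns $L^2$ smallness into uniform smallness on $K$; $e^{\psi}$ is locally bounded above, being psh-bounded above by $A\phi+B$. Thus $F_m\to b_p^Nf$ uniformly on $K$, and $F_m/b_p^N\to f$ uniformly on $K$ because $|b_p|$ is bounded below on $K\subset X\setminus b_p^{-1}(0)$. So $A_\phi^0(X)[1/b_p]$ is dense.

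For part 2, when $S'=\emptyset$, $e^{-\psi}$ is locally integrable everywhere. The same argument then works with $N=0$ and weight $\psi$ (as in Proposition~\ref{jet}(a)), giving density of $A_\phi^0(X)$ itself on arbitrary compacts $K\subset X$.
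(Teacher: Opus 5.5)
Your proposal is correct and is essentially the paper's own argument: apply Lemma~\ref{skoda} with weight $\psi+\max(t(\phi-k),0)+2\log(1+e^\phi)$ to $\bar\partial(\chi b_p f)$, use the $e^{-t}$ gain on the shell and the mean value inequality on $K$, then divide by $b_p$. The differences are detours you can drop: the paper takes $N=1$, because $b_p=h-g$ was produced by an $L^2$ estimate with weight $e^{-2\psi}$, so $|b_p|^2e^{-\psi}$ is already locally integrable and no Nadel/R\"uckert Nullstellensatz is needed; and the $e^{-\psi}$ versus $e^{-2\psi}$ ``mismatch'' is harmless, since $\psi\le A\phi+B$ gives $e^{-\psi}\le C e^{-2\psi}$ locally, so you should keep the weight $\psi$ (switching to $2\psi$ would break the curvature hypothesis of Lemma~\ref{skoda} unless you also add the $\log|\bigwedge_{i} df_i|_\beta^2$ term as in Proposition~\ref{jet}(b)).
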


\begin{proof}
Fix $f \in \cO(X)$, compact $K\subset X\backslash b_p^{-1}(0)$, and an integer $k > \sup_{x\in K}\phi(x)$. Choose a cutoff function $\chi$ such that $\chi=1$ on $\{\phi \le k+1\}$ and $\chi=0$ on $\{\phi \ge k+2\}$. Set
\[
h := \chi b_p f, \qquad u := \bar\partial h = (\bar\partial \chi)\, b_p f.
\]
Let $\eta_t := \max(t(\phi-k),0)$ and $\tau_t := \psi + \eta_t + 2\log(1+e^\phi)$. Then for $\lambda := (1+e^\phi)^{-2}$ we have
\[
\idd\tau_t + \Ric(\beta)
= \idd\psi + \Ric(\beta) + \idd\eta_t + 2\idd\log(1+e^\phi)
\ge \lambda \beta .
\]
Hence
\[
\int_X \lambda^{-1}|u|_\beta^2 e^{-\tau_t}\,\beta^n
= \int_X |\bar\partial\chi|_\beta^2 |b_p f|^2 e^{-\psi} e^{-\eta_t}\,\beta^n .
\]
Here $\bar\partial\chi$ is nonzero only on $\{k+1\le \phi \le k+2\}$, and on this region $|\bar\partial\chi|_\beta$ and $|f|$ are bounded, while $\eta_t\ge t$. Thus for some constant $C$,
\[
\int_X |\bar\partial\chi|_\beta^2 |b_p f|^2 e^{-\psi} e^{-\eta_t}\,\beta^n\le C e^{-t} \int_{k+1\le \phi \le k+2} |b_p|^2 e^{-\psi}\,\beta^n .
\]
From Proposition \ref{jet}, writing $b_p:=h-g$, the function $h$ vanishes near $S:=\{x~|~e^{-2\psi}\notin L^1_\loc(x)\}$, and for $g$ the squared integral with weight larger than $e^{-\psi}$ near $S$ is finite. Hence
\[
\int_{k+1\le \phi \le k+2} |b_p|^2 e^{-\psi}\,\beta^n
\]
is finite. Therefore, by Lemma \ref{skoda}, there exists $g_t$ satisfying $\bar\partial g_t=u$ and for some constant $C'$,
\[
\int_X |g_t|^2 e^{-\tau_t}\,\beta^n \le \int_X \lambda^{-1}|u|_\beta^2 e^{-\tau_t}\,\beta^n < C' e^{-t}.
\]
On $K$, we have $\tau_t=\psi+2\log(1+e^\phi)$, and since $\bar\partial g_t=u=\bar\partial (b_p f)$ vanishes on a neighborhood of $K$, the function $|g_t|^2$ is plurisubharmonic there. Since $K$ is compact, by the Cauchy estimate, for an open $U$ which is $K\subset \overline U\subset \{\phi < k\}\setminus b_p^{-1}(0)$, there exists a constant $C''$ such that
\[
\sup_K |g_t|^2
\le C'' \int_U |g_t|^2 e^{-\tau_t}\,\beta^n
\le C'' \int_X |g_t|^2 e^{-\tau_t}\,\beta^n
< C''C' e^{-t}.
\]
Set $f_t:=h-g_t$. Then $\bar\partial f_t=0$, and since $\tau_t \le A'\phi+B'$ for some $A',B'>0$, we have $g_t\in L_\phi^0(X)$ and $f_t\in A_\phi^0(X)$. Finally, on $K$ we have $h=b_p f$ and $b_p\ne 0$, hence for any $\varepsilon>0$ we can take $t$ sufficiently large so that on $K$,
\[
\left| f-\frac{f_t}{b_p}\right|
= \left|\frac{b_p f-f_t}{b_p}\right|
= \left|\frac{g_t}{b_p}\right|
<\varepsilon .
\]

If $S'=\emptyset$, the same proof with $b_p=1$ yields the desired conclusion.
\end{proof}

\begin{prop}\label{main1}
$F(X \setminus b_p^{-1}(0))$ is rationally convex. In particular, $F(X \setminus b_p^{-1}(0))$ is affine.
\end{prop}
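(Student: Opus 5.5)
We will show rational convexity and then invoke Theorem~\ref{Mok}, which applies because Proposition~\ref{complement} says $H:=\cA\setminus F(X\setminus b_p^{-1}(0))$ is an algebraic hypersurface of $\cA$. Write $\Omega:=X\setminus b_p^{-1}(0)$ and identify $\Omega$ with $F(\Omega)$. The first point is that the regular function ring $R(F(\Omega))=R(\cA\setminus H)$ contains $F^{*}$ of every polynomial on $\bC^{N+1}$, and also $1/b_p$. The latter holds because $z_0=b_p$ is a coordinate that is nonvanishing on $\cA\setminus H$, and $\{z_0=0\}\cap\cA\subset H$. It is also useful to add auxiliary elements of $A_\phi^0(X)$ to $F$ as in Remark~\ref{addition}, which is harmless.

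Let $K\subset\Omega$ be compact and $x\in\Omega\setminus \widehat K^{\mathrm{rat}}$, where the hull is taken with respect to $R(\cA\setminus H)$. We want to show that $\widehat K$ is contained in a fixed compact subset of $\Omega$. Because $X$ is Stein, the $\cO(X)$-hull $\widehat K_{\cO(X)}$ is compact in $X$. The plan is to compare the rational hull with the holomorphic hull. Set $c:=\min_K|b_p|>0$. Any $y\in\widehat K$ satisfies $|1/b_p(y)|\le 1/c$, so $|b_p(y)|\ge c$. Hence $\widehat K$ stays at a definite distance from $b_p^{-1}(0)$, and it suffices to show $\widehat K\subset \widehat K_{\cO(X)}$. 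For the latter, take $f\in\cO(X)$ with $|f(y)|>\sup_K|f|$. By Proposition~\ref{dense}(1), there exist $f_t/b_p^{m}$ with $f_t\in A_\phi^0(X)$ approximating $f$ uniformly on a compact set containing $K\cup\{y\}$ inside $\Omega$. Such quotients separate $y$ from $K$ as well, so it remains to know that $f_t/b_p^m$ lies in $R(\cA\setminus H)$.

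This is the key step and the main obstacle. We need every element of $A_\phi^0(X)$, restricted to $\Omega$, to be a regular function on $F(\Omega)\cong\cA\setminus H$, not merely holomorphic. Given $g\in A_\phi^0(X)$, we adjoin it to $F$ and renormalize (Remark~\ref{addition}), obtaining $F'=(F,g,\dots)$ and a normal variety $\cA'$. The projection $\cA'\to\cA$ is birational; this uses Proposition~\ref{isom}, since $F$ is already injective and \'etale on $\Omega$. Moreover, $F'(\Omega)\to F(\Omega)$ is a bijective holomorphic map between normal spaces, hence biholomorphic. Applying Proposition~\ref{complement} to $F'$, the set $F'(\Omega)$ is Zariski open in $\cA'$. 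Therefore $F(\Omega)$ and $F'(\Omega)$ are isomorphic as quasi-projective varieties via an algebraic morphism with holomorphic inverse. A bijective birational morphism onto a normal variety is an isomorphism by Zariski's main theorem. Consequently $g$ is regular on $\cA\setminus H$.

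With this in hand, $A_\phi^0(X)[1/b_p]\subset R(\cA\setminus H)$, so $\widehat K\subset\widehat K_{\cO(X)}\cap\{|b_p|\ge c\}$, which is compact in $\Omega$. Thus $F(\Omega)$ is rationally convex, and Theorem~\ref{Mok} gives that it is affine.

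If Zariski's main theorem is awkward to apply here, there is a fallback. We would instead show that $g\circ F^{-1}$ is algebraic over $K(\cA)$, since $\trdeg\cK_\phi(X)=n$, and extend it as a rational function by Lemma~\ref{rationalext}. Normality of $\cA$ then makes it regular on $\cA\setminus H$, because its poles are divisorial and disjoint from $F(\Omega)$.
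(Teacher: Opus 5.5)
Your proposal is correct, and its skeleton is exactly the paper's. That skeleton runs as follows: $1/b_p$ is regular, so the rational hull lies in $\{|b_p|\ge \min_K|b_p|\}$; the rational hull sits inside the $\cO(X)$-hull by Proposition~\ref{dense}(1); and Theorem~\ref{Mok} finishes. The only real difference is how you show that every $g\in A_\phi^0(X)$ induces a regular function on $F(X\setminus b_p^{-1}(0))$.

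The paper does this in one line. Since $\trdeg_{\bC}\cK_\phi(X)=n$, $g$ is algebraic over $\bC(f_1,\dots,f_n)$. Lemma~\ref{rationalext} then makes $g\circ F^{-1}$ a rational function on $\cA$, and being holomorphic on $F(X\setminus b_p^{-1}(0))$ it is regular there. This is precisely your fallback, and your remark about divisorial poles on the normal variety $\cA$ supplies the detail the paper leaves implicit.

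Your main route instead adjoins $g$ to $F$ and re-runs the whole quasi-affineness argument of Section~\ref{algebraization} (Construction~\ref{con} through Proposition~\ref{complement}) for the augmented map $F'$. This gives $F'(\Omega)$ Zariski open in $\cA'$, and you conclude with Zariski's main theorem. That works, and it buys a slightly more structural fact: adjoining further $\phi$-polynomials does not change the algebraic structure on $\Omega$, which is the content behind Remark~\ref{addition}. It is, however, much heavier than the paper's argument.

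One point in that route needs reordering. Injectivity of $F$ on $\Omega$ alone does not show that $\cA'\to\cA$ is birational, because other points of a fiber could lie in $\cA'\setminus F'(\Omega)$. Birationality follows only once you know $F'(\Omega)$ is Zariski open in $\cA'$. Then a generic fiber of $\cA'\to\cA$ lies entirely inside $F'(\Omega)$, where the projection is injective.
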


\begin{proof}
First, note that the first coordinate of $F$ is $b_p$. Hence $1/b_p \circ F^{-1}$ is a regular function on $F(X \setminus b_p^{-1}(0))$. Moreover, for any $f \in A_\phi^0(X)$, the function $f$ is algebraic over $\mathbb{C}(f_1,\cdots,f_n)$. Hence $f\circ F^{-1}$ is algebraic over $K(\cA)$, and Lemma \ref{rationalext} extends $f\circ F^{-1}$ to a rational function on $\cA$. On $F(X\setminus b_p^{-1}(0))$, $f\circ F^{-1}$ is smooth, hence it is a regular function on $F(X\setminus b_p^{-1}(0))$. 

Let $K \subset X \setminus b_p^{-1}(0)$ be a compact set and consider the rational convex hull $\widehat{F(K)}$ of $F(K)$, and $\widehat{K}:=F^{-1}\left(\widehat{F(K)}\right)$. Note that $\widehat{K}$ and $\widehat{F(K)}$ are homeomorphic. The set
\[
A:=\left\{x\in X \setminus b_p^{-1}(0) \,\middle|\, \frac{1}{|b_p(x)|} \le \sup_K \frac{1}{|b_p|} \right\}
\]
is closed and well-defined since $b_p$ does not vanish on $X \setminus b_p^{-1}(0)$, and by definition $\widehat{K} \subset A\subset X\setminus b_p^{-1}(0)$. Next, consider the holomorphically convex hull of $K$ in $X$,
\[
\overline{K}:=\{x \in X \mid |f(x)| \le \sup_K |f| \text{ for all } f \in \mathcal{O}(X)\}.
\]
Since $X$ is Stein, this set is compact. For $x \in A\subset X\setminus b_p^{-1}(0)$, if $x \notin \overline{K}$, then there exists $h \in \mathcal{O}(X)$ such that $|h(x)| > \sup_K |h|$. Applying Proposition \ref{dense} to $h$ on the compact set $K \cup \{x\}$, there exists $\tilde{h} \in A_\phi^0(X)[\frac{1}{b_p}]$ such that $|\tilde{h}(x)| > \sup_K |\tilde{h}|$. Since $\tilde h\circ F^{-1}$ is a regular function on $F(X\setminus b_p^{-1}(0))$, $x\notin\widehat{K}$. Hence it follows that
\[
\widehat{K} \subset \overline{K} \cap A.
\]
Since $\widehat{K}$ is closed by definition and $\overline{K} \cap A$ is compact, it follows that $\widehat{K}$ is compact, which proves the claim.
\end{proof}

\section{Global Aspects of Theorem~\ref{main}}
Finally, we prove the remaining global statements of Theorem~\ref{main}. Let $p_i\in X\setminus S$ be arbitrary points and set $b_i:=b_{p_i}$. Consider Remark~\ref{addition}. Namely, if we take the finite product $F^{(k)}:=F_1\times\cdots\times F_k$ and let $\cA^{(k)}$ be the Zariski closure of the image of $X$ under this map, then each $X\setminus b_i^{-1}(0)$ is mapped by $F^{(k)}$ onto a Zariski open subset of $\cA^{(k)}$, which completes the proof of the first statement.

The statements (ii), (iii), and (iv) of Theorem~\ref{main} are applications of part (i) proved in the previous section.

\paragraph{Proof of Theorem~\ref{main}, (ii).}
Note that $X\setminus S$ is Lindel\"of. Hence, we may choose countably many points $p_i$ and take the corresponding $b_i:=b_{p_i}$ and $F_i$ as in Proposition~\ref{isom}, so that $X\setminus S$ is covered by the countable union of $X\setminus b_i^{-1}(0)$. In other words, for
\[
S_k:=\bigcap_{i=1}^k b_i^{-1}(0)
\]
we have $\bigcap_{k=1}^\infty S_k=S$. If $S_N=S$ for some finite $N$, then for $F^{(N)}=F_1\times\cdots\times F_N$ and the Zariski closure $\cA^{(N)}$ of its image, $X\setminus S$ can be written as the union of finitely many Zariski open sets
\[
F^{(N)}(X\setminus b_i^{-1}(0)),\qquad 1\le i\le N,
\]
in $\cA^{(N)}$, and the proof is complete.

To show this, recall the following lemma (\cite{D85}, Lemme~13.2).

\begin{lem}\label{irred}
Let $X$ be an $n$-dimensional complex manifold and $Y$ an analytic set whose maximal dimension is $p$. Then, writing $d=n-p$, we have
\[
H^q(X,X\setminus Y;\mathbb{R})\cong
\begin{cases}
0 & q<2d,\\
\mathbb{R}^J & q=2d,
\end{cases}
\]
where $J$ is an index set in bijection with the $p$-dimensional irreducible components of $Y$.
\end{lem}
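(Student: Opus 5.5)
The plan is to use the long exact sequence of the pair $(X,X\setminus Y)$ together with excision to a tubular neighbourhood of the regular part of the top-dimensional components. Throughout, cohomology has real coefficients.

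\textbf{Step 1: vanishing below degree $2d$.} Stratify $Y$ by a finite or locally finite filtration $Y=Y_p\supset Y_{p-1}\supset\cdots\supset Y_{-1}=\emptyset$ by closed analytic subsets such that $Y_j\setminus Y_{j-1}$ is a complex submanifold of $X\setminus Y_{j-1}$ of dimension $\le j$. For instance, $Y_{j-1}$ can be taken to be the union of $\mathrm{Sing}$ of the relevant strata with the components of dimension $<j$. Then induct on $j$ using the long exact sequence of the triple
\[
(X\setminus Y_{j-1},\,X\setminus Y_j)\subset(X,\,X\setminus Y_j),
\]
namely
\[
H^q(X,X\setminus Y_{j-1})\to H^q(X,X\setminus Y_j)\to H^q(X\setminus Y_{j-1},X\setminus Y_j)\to H^{q+1}(X,X\setminus Y_{j-1}).
\]
The third term is computed by excision and the Thom isomorphism. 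Here $Z_j:=Y_j\setminus Y_{j-1}$ is a closed submanifold of the manifold $X\setminus Y_{j-1}$ of real codimension $2(n-j)$ whose normal bundle is complex and hence oriented. Therefore
\[
H^q(X\setminus Y_{j-1},X\setminus Y_j)\cong H^{q-2(n-j)}(Z_j).
\]
This vanishes for $q<2(n-j)$, and in degree $2(n-j)$ it equals $H^0(Z_j)=\mathbb R^{\pi_0(Z_j)}$. Since $n-j\ge d$ with equality only when $j=p$, induction gives $H^q(X,X\setminus Y)=0$ for $q<2d$. The same sequence shows that in degree $2d$ the contributions of the lower strata vanish, because $H^{2d}$ and $H^{2d+1}$ of those relative pairs are zero when $n-j>d$. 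To justify this, note that $2d+1<2(n-j)$ whenever $n-j\ge d+1$ and $d\ge 1$.

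\textbf{Step 2: the degree-$2d$ group.} From Step 1 we get
\[
H^{2d}(X,X\setminus Y)\cong H^{2d}(X\setminus Y_{p-1},X\setminus Y)\cong H^0(Y_p\setminus Y_{p-1}),
\]
with $Y_{p-1}$ the union of $\mathrm{Sing}(Y)$ and the lower-dimensional components. The connected components of $Y^{(p)}_{\mathrm{reg}}$, the regular locus of the pure $p$-dimensional part minus lower components, are in bijection with the $p$-dimensional irreducible components of $Y$. This is the standard fact that the regular locus of an irreducible analytic set is connected and dense. This bijection gives $\mathbb R^J$, with a product if $J$ is infinite.

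\textbf{Main obstacle.} The delicate point is the boundary map at the degree-$2d$ stage. One must check that $H^{2d+1}(X\setminus Y_{p-1},X\setminus Y)$ or $H^{2d-1}$ terms do not interfere. This requires the Thom isomorphism for a noncompact submanifold that is closed only in $X\setminus Y_{p-1}$, which holds with ordinary, not compactly supported, cohomology via a tubular neighbourhood and excision. It also requires handling a possibly infinite, locally finite collection of strata. If needed, I would pass to a limit over an exhaustion of $X$ by relatively compact open sets, or use sheaf-theoretic local cohomology $\mathcal H^q_Y(\mathbb R)$, which vanishes for $q<2d$ by the local version of the same argument, and then apply the local-to-global spectral sequence.
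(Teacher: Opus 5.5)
Your proof is correct. The paper gives no argument for this lemma: it is quoted from Demailly \cite[Lemme~13.2]{D85} and used in the proof of Theorem~\ref{main}(ii). So what you supply is a self-contained replacement for that citation. It uses only the exact sequence of the triple $(X,X\setminus Y_{j-1},X\setminus Y_j)$, excision to a tubular neighbourhood of the closed submanifold $Y_j\setminus Y_{j-1}\subset X\setminus Y_{j-1}$, and the Thom isomorphism for its complex, hence oriented, normal bundle.

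The citation buys brevity. Your route buys transparency, and it dissolves everything you list as the ``main obstacle'':
\begin{enumerate}
\item With $Y_{j-1}:=\mathrm{Sing}(Y_j)\cup Y_j^{(<j)}$, the filtration has length $p+1$ however many components $Y$ has.
\item The Thom isomorphism in ordinary cohomology holds over an arbitrary noncompact, disconnected base.
\item The inductive statement $H^q(X,X\setminus Y_{j-1})=0$ for $q\le 2(n-j)+1$ is exactly what kills both flanking terms in degree $2(n-j)$.
\end{enumerate}
Hence no exhaustion or local-cohomology spectral sequence is needed. Your side condition $d\ge 1$ is also superfluous, since $n-j\ge d+1$ already gives $2(n-j)\ge 2d+2$.

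For Step~2, the cleanest justification is the definition of the global irreducible components as the closures of the connected components of $Y_{\mathrm{reg}}$. Then $Y_p\setminus Y_{p-1}$ is precisely the union of the $p$-dimensional connected components of $Y_{\mathrm{reg}}$. Its $H^0$ is the product $\mathbb{R}^J$, which is finite-dimensional iff $J$ is finite; that is exactly what the application in Theorem~\ref{main}(ii) needs.

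An alternative packaging is Alexander--Lefschetz duality $H^q(X,X\setminus Y;\mathbb{R})\cong H^{\mathrm{BM}}_{2n-q}(Y;\mathbb{R})$. It makes the vanishing for $q<2d$ immediate from $\dim_{\mathbb{R}}Y=2p$. However, identifying the top Borel--Moore group with the span of the fundamental classes of the $p$-dimensional components comes back to essentially your stratification argument.
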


Apply this lemma to the complex manifold $X\setminus S$ and its analytic subset $S_k\setminus S$. Let $p_k$ be the maximal dimension of $S_k\setminus S$ and set $d_k=n-p_k$. Consider the relative exact sequence
\[
H^{2d_k-1}(X\setminus S_k)\to H^{2d_k}(X\setminus S,X\setminus S_k)\to H^{2d_k}(X\setminus S).
\]
The first term is finite-dimensional since $X\setminus S_k$ is quasi-affine, and the third term is finite-dimensional by assumption. Hence the middle term is finite-dimensional. By Lemma~\ref{irred}, it follows that $S_k\setminus S$ has only finitely many maximal irreducible components.

Since $\bigcap_{k=1}^\infty S_k=S$, these maximal irreducible components must disappear after finitely many steps. Consequently $p_k$ must decrease after finitely many steps. Repeating the same argument until $S_k\setminus S$ vanishes, we obtain the conclusion.

\paragraph{Proof of Theorem~\ref{main}, (iii).}
We use the same argument as in the final part of Proposition~\ref{complement}. Namely, for the $F^{(N)}$ and $\cA^{(N)}$ constructed above, if a point of $\cA^{(N)}\setminus F^{(N)}(X\setminus S)$ had codimension at least $2$, then since $\cA^{(N)}$ is normal, the Hartogs extension theorem implies that the holomorphic convexity of $F^{(N)}(X\setminus S)$ fails, yielding a contradiction.

\paragraph{Proof of Theorem~\ref{main}, (iv).}
By Proposition~\ref{dense}~(2), $A_\phi^0(X)$ is dense in $\cO(X)$ with respect to the topology of uniform convergence on compact subsets. As in the initial part of the proof of Proposition~\ref{main1}, for any $f \in A_\phi^0(X)$ the function $f \circ (F^{(N)})^{-1}$ is a regular function on $F^{(N)}(X)$ by Lemma~\ref{rationalext} and smoothness. Since $X$ is Stein, this implies that $F^{(N)}(X)$ is rationally convex. Therefore, by Theorem~\ref{Mok}, $F^{(N)}(X)$ is affine.

\part{Algebraization of Entire Grauert Tube}

We now investigate the algebraicity of entire Grauert tubes using Theorem~\ref{main}. This is carried out by constructing explicitly a function $\psi$ satisfying the conditions of Theorem~\ref{main}. The construction relies heavily on the Monge--Amp\`ere foliation of the entire Grauert tube, as well as on the special vector fields along each leaf, which are called \emph{parallel vector fields}.



\section{Review of Grauert tube theory}

In this section we review the basic concepts of Grauert tube theory required for the proof of Theorem \ref{egt}. For more detailed classical references we refer to \cite{LS91}, \cite{S91}, \cite{GS91}, \cite{BK77}, and \cite{BHH03}.
\subsection{Grauert tubes}
The Grauert tube of radius $r$ of a real $n$-dimensional compact Riemannian manifold $(M,g)$ is the complex $n$-dimensional manifold $(T^rM,J)$, where
\[
T^rM:=\{v\in TM:\|v\|_g<r\},
\]
and $J$ is the unique complex structure on $T^rM$, called the \emph{adapted complex structure}. To give the precise definition, we first recall the Riemann foliation on the tangent bundle of a Riemannian manifold.

\begin{defn}\label{rf}
Let $(M,g)$ be a Riemannian manifold. For its tangent bundle $TM$, define the dilation operator
\[
N_\tau:TM\longrightarrow TM,\qquad N_\tau(v)=\tau v.
\]
Given a geodesic $\gamma$, define an immersion $\phi_\gamma:\mathbb{C}\longrightarrow TM$ by
\[
\phi_\gamma(\sigma+i\tau)=N_\tau\bigl(\gamma'(\sigma)\bigr).
\]
The foliation of $TM\setminus M$ whose leaves are $\phi_\gamma(\mathbb{C}\setminus\mathbb{R})$, as $\gamma$ ranges over all geodesics of $M$, is called the \emph{Riemann foliation}.
\end{defn}

Indeed, for every $v\in TM\setminus M$, there exists a geodesic $\gamma$ whose velocity vector is tangent to $v$. Moreover, if two geodesics $\gamma$ and $\gamma'$ have distinct images, then $\phi_\gamma(\mathbb{C}\setminus\mathbb{R})\cap\phi_{\gamma'}(\mathbb{C}\setminus\mathbb{R})=\varnothing$. Thus, the above collection of leaves defines a well-defined foliation.

Recall that $T^rM:=\{v\in TM:\|v\|_g<r\}$. According to Lempert and Sz\H{o}ke \cite{S91,LS91}, for every compact real-analytic Riemannian manifold $(M,g)$, there exists $r>0$ and a unique complex structure $J$ on $T^rM$ such that
\[
\phi_\gamma:\{\sigma+i\tau\in\mathbb{C}:|\tau|<r\}\longrightarrow T^rM
\]
is holomorphic for every geodesic $\gamma$. This leads to the following definition.

\begin{defn}
Let $(M,g)$ be a compact Riemannian manifold.
\begin{enumerate}
    \item The complex structure $J:TT^rM\longrightarrow TT^rM$ for which every leaf map $\phi_\gamma$ of the Riemann foliation is holomorphic is called the \emph{adapted complex structure}.
    \item With respect to the adapted complex structure, each leaf immersion $\phi_\gamma$ into $T^rM$, as well as its image, is called a \emph{complexified leaf} (or simply a \emph{leaf}).
    \item The complex manifold $(T^rM,J)$ endowed with the adapted complex structure is called the \emph{Grauert tube of radius $r$} of $(M,g)$. If the adapted complex structure is defined on the entire tangent bundle $T^\infty M:=TM$, then $(TM,J)$ is called an \emph{entire Grauert tube}.
\end{enumerate}
\end{defn}

The adapted complex structure $(T^r M,J)$ satisfies a special Stein manifold structure, called a \emph{Monge--Amp\`ere model}.

\begin{defn}
Let $X$ be a Stein manifold equipped with a (possibly bounded) strictly plurisubharmonic exhaustion function $u^2$. Assume that $M:=\{u=0\}$ is a smooth submanifold satisfying $\dim_{\mathbb{C}}X=\dim_{\mathbb{R}}M$. If $u$ satisfies the homogeneous complex Monge--Amp\`ere equation (HCMA)
\[
\bigl(dd^c u\bigr)^n=0\quad\text{on}\quad X\setminus M,
\]
then the triple $(X,M,u)$ is called a \emph{Monge--Amp\`ere model}.
\end{defn}

For the adapted complex structure $J$ of a compact Riemannian manifold $(M,g)$, the norm function $u(v)=\|v\|_g$ makes $(T^r M,M,u)$ a Monge--Amp\`ere model \cite[Theorem 5.6]{LS91}. Conversely, suppose that $(X,M,u)$ is a Monge--Amp\`ere model and set $R:=\sup_Xu$. Then there exists a diffeomorphism $\phi:T^RM\to X$ such that the pullback by $\phi$ of the complex structure on $X$ is the adapted complex structure on $T^RM$ associated with the restriction to $M$ of the K\"ahler metric induced by $u^2$ \cite[Theorem 3.1]{LS91}. From this point of view, adapted complex structures and Monge--Amp\`ere models are equivalent objects \cite[Section 5]{LS91}.

Since a Grauert tube $T^rM$ is a subset of the tangent bundle, it carries a natural symplectic structure. Let $\pi:TM\to M$ be the natural projection. Using its differential $d\pi:TTM\to TM$, we define the Liouville $1$-form $\theta$ and the corresponding Poincar\'e $2$-form $\Omega$ at $(p,v)\in TM$ by
\[
\theta_{(p,v)}(-):=g_p(d\pi(-),v), \qquad \Omega:=-d\theta.
\]
Thus $TM$ always carries a canonical symplectic structure. On the domain where the Grauert tube is defined, the complex structure $J$ together with $\Omega$ forms a Kähler structure, and the resulting Kähler metric coincides with the original Riemannian metric $g$ along $M$.



\subsection{Parallel vector fields}
Now we introduce \emph{parallel vector fields} on each complexified leaf, which are key ingredients in the explicit description of the adapted complex structure. Parallel vector fields along leaves are analogous to Jacobi fields on geodesics, and these can be defined in two equivalent ways.

First, we give a formal definition. Viewing $TM\setminus M$ as a smooth manifold, recall that the geodesic flow $\Phi_t$ and the dilation $N_\tau$ are smooth self-maps of $TM\setminus M$. A vector field $\xi:\phi_\gamma\to TTM|_{\phi_\gamma}$ is called a parallel vector field if it is invariant under $d\Phi_t$ and $dN_\tau$, i.e.
\[
d\Phi_t\,\xi(\alpha+i\beta)=\xi(\alpha+t+i\beta),\qquad
dN_\tau\,\xi(\alpha+i\beta)=\xi(\alpha+i\tau\beta).
\]
Note that any point of $\phi_\gamma$ away from the real locus can be moved to any other point of $\phi_\gamma$ by a combination of the geodesic flow and dilation. Hence, the value of a parallel vector field at a single non-real point determines it everywhere.

Parallel vector fields can also be described concretely in analogy with Jacobi fields. Consider a one-parameter family of geodesics $\gamma_s$ with $\gamma_0=\gamma$. This induces a one-parameter family of leaves $\phi_{\gamma_s}$. Then a parallel vector field $\xi:\phi_\gamma\to TTM|_{\phi_\gamma}$ can be defined by
\[
\xi(\sigma+i\tau):=\left.\frac{\partial}{\partial s}\phi_{\gamma_s}(\sigma+i\tau)\right|_{s=0}
=\left.\frac{\partial}{\partial s}N_\tau(\gamma_s'(\sigma))\right|_{s=0}.
\]
Note that when $\tau=0$ this coincides with a Jacobi field. The vector field defined in this way is invariant under geodesic flow and dilation, and conversely every parallel vector field arises in this manner.

The analogy between Jacobi fields on the real part $M$ and parallel vector fields on the Grauert tube $T^rM$ is remarkable. First, the leaves of the Monge--Amp\`ere foliation are totally geodesic and flat in $TM\setminus M$ (\cite{LS91}, Proposition 3.4). Thus, along any geodesic contained in the leaf, any parallel vector field restricts to a Jacobi field. Combining this with the second definition above, we obtain a one-to-one correspondence between normal Jacobi fields along $\gamma$ and parallel vector fields normal to the leaf $\phi_\gamma$. Consequently the space of (normal) parallel vector fields is a real $2n$-dimensional (respectively $2n-2$-dimensional) vector space. Since a concrete basis of this space will be recalled frequently, we record the following setting.

\begin{con}\label{setting}
Assume that a real $n$-dimensional Riemannian manifold $(M,g)$ admits an entire Grauert tube $TM$. Fix $p\in M$ and a unit vector $v\in T_pM$. Let $\gamma$ be the geodesic satisfying $\gamma(0)=p$ and $\gamma'(0)=v$, and let $\phi_\gamma$ be the corresponding leaf. Choose an orthonormal basis $e_1,\dots,e_n$ of $T_pM$ with $e_n=v$.  

Let $\xi_i(\sigma)$ and $\eta_j(\sigma)$ be Jacobi fields with initial conditions
\[
\xi_i(0)=e_i,\qquad \frac{D}{d\sigma}\xi_i(0)=0,
\]
\[
\eta_j(0)=0,\qquad \frac{D}{d\sigma}\eta_j(0)=e_j.
\]
Define the corresponding parallel vector fields $\xi_i(\sigma+i\tau)$ and $\eta_j(\sigma+i\tau)$ along $\phi_\gamma$.  

Let $\Xi:=(\xi_1,\dots,\xi_n)$ and $H:=(\eta_1,\dots,\eta_n)$ be the $n$-tuples of parallel vector fields. Along the geodesic $\gamma$ define the transition matrix $A:\gamma\to M_n(\mathbb{R})$ by $H=\Xi A$.  

Using the complex structure $J$ of the Grauert tube, define the holomorphic parts
\[
\Xi^{1,0}:=(\xi_1^{1,0},\dots,\xi_n^{1,0}),\qquad
H^{1,0}:=(\eta_1^{1,0},\dots,\eta_n^{1,0}),
\]
and the complex transition matrix $\Phi:\phi_\gamma\to M_n(\mathbb{C})$ by
\[
H^{1,0}=\Xi^{1,0}\Phi .
\]
\end{con}

In this setting, the entries of each of $\Xi^{1,0}$ and $H^{1,0}$ form a linearly independent system of holomorphic vector fields on the non-real locus (see \cite{LS91}, proof of Theorem~4.2). Note that $\Phi$ is the analytic continuation of $A$, i.e.\ $\Phi|_{\mathbb{R}}=A$. An immediate consequence of Construction \ref{setting} is that it allows us to specify the complex structure $J$ of the Grauert tube. Since each leaf is a complex manifold and $J$ does not mix the tangent and transverse directions of the leaf (\cite{LS91}, Theorem 5.3), it suffices to determine the action of $J$ on $H$ and $\Xi$. Writing $\Phi=A+iB$, comparison of the real and imaginary parts of $H^{1,0}=\Xi^{1,0}\Phi$ yields
\[
H=\Xi A+J\Xi B, \qquad -JH=\Xi B-J\Xi A,
\]
\begin{equation}\label{J}
J\Xi=(H-\Xi A) B^{-1}, \qquad
JH=H B^{-1} A-\Xi(B+A B^{-1} A).
\end{equation}
The results of \cite{S91} and \cite{LS91} together show that the maximal radius to which $\Phi$ extends while $B:=\Im\Phi$ remains invertible coincides with the maximal radius of the Grauert tube.

Finally we record several properties of the Grauert tube.

\begin{thm}\label{egtprop}
Let $T^rM$ be a Grauert tube with exhaustion $E:=\|v\|_g^2/2$, Liouville $1$-form $\theta$, Poincar\'e $2$-form $\Omega$, and parallel vector fields $\xi_i,\eta_j$ as in Construction \ref{setting}. Then the following hold.
\begin{enumerate}
\item For $(p,v)\in T^rM$, the space
\[
V_{(p,v)}:=\ker\theta_{(p,v)}\cap\ker(dE)_{(p,v)}\subset T_v(TM)
\]
is a real $2n-2$-dimensional $J$-invariant subspace of $T_v(TM)$ (\cite{LS91}, Theorem 5.3).
\item $\bar\partial E-\partial E=i\theta$, and $\idd E=\frac12\Omega$ (\cite{LS91}, Corollary 5.5).
\item $\Omega(\xi_i,\xi_j)=\Omega(\eta_i,\eta_j)=0$ (\cite{LS91}, Proposition 6.6).
\item On $\sigma+i\tau=i$,
\[
\Omega(\xi_i,\eta_j)=
\begin{cases}
1 & i=j,\\
0 & i\neq j,
\end{cases}
\]
(\cite{BL18}, proof of Proposition 2).
\end{enumerate}
\end{thm}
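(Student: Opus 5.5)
The plan is to derive all four items from two facts: the defining invariance of the adapted complex structure, and the elementary identity $\theta=g(d\pi(\cdot),v)$. Where the computations are already in \cite{LS91} and \cite{BL18}, I would cite them rather than repeat them. I would prove item 2 first, since item 1 follows from it.

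For item 2, I would restrict to a single leaf $\phi_\gamma$ with $\gamma$ of unit speed. There $E(\phi_\gamma(\sigma+i\tau))=\tau^2/2$, so $\partial E$ and $\bar\partial E$ can be computed explicitly in the coordinate $\sigma+i\tau$. Along the leaf, $d\pi(\partial_\sigma)=\tau\gamma'$ and $d\pi(\partial_\tau)=0$, so $\theta(\partial_\sigma)=\tau=-dE(J\partial_\sigma)$ and $\theta(\partial_\tau)=0$. This matches $i(\bar\partial E-\partial E)=-dE\circ J$ in the leaf directions. For the transverse directions I would use the splitting of \cite{LS91}, Theorem 5.3: $J$ preserves the normal parallel fields, and on these both $\theta$ and $dE\circ J$ vanish. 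The vanishing of $\theta$ follows because normal Jacobi fields have $g(\xi,\gamma')$ and $g(\eta,\gamma')$ identically zero, and the vanishing of $dE\circ J$ follows because dilation and the geodesic flow preserve $E$ up to scaling along these fields. Applying $d$ to $\bar\partial E-\partial E=i\theta$ then gives $2i\partial\bar\partial E=-d\theta=\Omega$, which is the stated identity $\idd E=\frac12\Omega$ under the paper's conventions.

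Item 1 then follows. Item 2 says $\theta=-dE\circ J$, so $\ker\theta=J(\ker dE)$. Hence $V=\ker dE\cap J\ker dE$ is the maximal $J$-invariant subspace of $\ker dE$. Off $M$ the covectors $dE$ and $\theta$ are linearly independent, because on the leaf one is $\tau\,d\tau$ and the other is $\tau\,d\sigma$. Therefore $V$ has real dimension $2n-2$.

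For items 3 and 4, I would use the symplectic interpretation of Jacobi fields. The geodesic flow preserves $\Omega$, and $N_\tau^*\Omega=\tau\Omega$. For parallel fields $\xi,\eta$ along $\phi_\gamma$, the quantity $\Omega(\xi,\eta)$ at the point $\tau\gamma'(\sigma)$ therefore equals $\tau\,\Omega(\xi,\eta)|_{\gamma'(\sigma)}$. On $TM$, $\Omega$ evaluated on variations of geodesics equals the Wronskian $g(J_1,J_2')-g(J_1',J_2)$ of the corresponding Jacobi fields, up to sign, and this Wronskian is constant in $\sigma$. Evaluating at $\sigma=0$ with the initial data of Construction \ref{setting} gives $\Omega(\xi_i,\xi_j)=0$, $\Omega(\eta_i,\eta_j)=0$ and $\Omega(\xi_i,\eta_j)=\delta_{ij}$ at $\tau=1$, that is, at $\sigma+i\tau=i$. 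Item 3 holds for every $\tau$ because of the scaling.

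The main obstacle is the sign and normalization bookkeeping relating $\Omega=-d\theta$ to the Jacobi-field Wronskian, together with the paper's convention $dd^c=2i\partial\bar\partial$. I would settle these by checking everything on the flat torus, where all objects are explicit, and otherwise defer to \cite{LS91}, Corollary 5.5 and Proposition 6.6, and \cite{BL18}, Proposition 2.
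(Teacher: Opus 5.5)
\textbf{Gap: items 1 and 2 are circular.} The paper gives no argument for this statement; each item is simply quoted from \cite{LS91} or \cite{BL18}. Your items 3 and 4 are a genuine and correct proof, but items 1 and 2 are not.

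In the transverse step of item 2 you invoke \cite{LS91}, Theorem 5.3, in the form ``$J$ preserves the normal parallel fields''. Off $M$, the span of the normal parallel fields is exactly $V_{(p,v)}$. For a normal Jacobi field $Z$, the parallel field $\xi$ at $\tau\gamma'(\sigma)$ has $\theta(\xi)=\tau\,g(Z,\gamma')=0$ and $dE(\xi)=\tau^2 g(Z',\gamma')=0$, and both spaces have dimension $2n-2$. So the fact you cite is item 1 itself, and deducing item 1 from item 2 afterwards proves nothing.

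The leaf computation cannot replace that input. On a single leaf you only see $J$ on the plane $\mathrm{span}(\partial_\sigma,\partial_\tau)$. How $J$ acts transversally is encoded in how the holomorphic leaves vary, which is what \cite{LS91} analyses. So item 1 must either be cited, as the paper does, or derived from further structure. One such route uses the block-diagonal form of $\Phi$ (the continuation of the block-diagonal $A$), together with Equation~\eqref{J} and the invertibility of $B$; these inputs again come from \cite{LS91}.

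Once item 1 is cited, your argument for item 2 is a valid reduction. Off $M$, $T_w(TM)$ is the direct sum of the leaf plane and $V_w$, and on $M$ both $\theta$ and $dE$ vanish. However, $dE\circ J$ vanishes on $V$ simply because $JV\subset V\subset\ker dE$; your remark about dilation and the geodesic flow does not establish this. Note also that item 1 needs $v\neq 0$, since at points of $M$ one has $V=T_v(TM)$.

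\textbf{Sign slip in the leaf computation.} Since $\pi(\phi_\gamma(\sigma+i\tau))=\gamma(\sigma)$, you have $d\pi(\partial_\sigma)=\gamma'(\sigma)$, not $\tau\gamma'(\sigma)$, so $\theta(\partial_\sigma)=\tau$. Holomorphicity of $\phi_\gamma$ gives $J\partial_\sigma=\partial_\tau$, so $dE(J\partial_\sigma)=\tau$. Hence $\theta=dE\circ J$, not $-dE\circ J$. Combined with $\bar\partial E-\partial E=i\,dE\circ J$ (equivalent to your $i(\bar\partial E-\partial E)=-dE\circ J$), this gives exactly item 2, whereas your sign would give $-i\theta$. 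The flat case confirms it: under $(x,v)\mapsto x+iv$ one has $\theta=\sum_j v_j\,dx_j=dE\circ J$. The deduction $\ker\theta=J\ker dE$ does not depend on the sign.

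\textbf{Items 3 and 4 are correct, and the sign is fixed.} Use the horizontal/vertical splitting given by the Levi-Civita connection. Then $-d\theta(X,Y)=g(X_h,Y_v)-g(X_v,Y_h)$. The parallel field of a Jacobi field $Z$ at $\tau\gamma'(\sigma)$ has horizontal part $Z(\sigma)$ and vertical part $\tau Z'(\sigma)$. So for parallel fields coming from Jacobi fields $Z_1,Z_2$,
\[
\Omega=\tau\bigl(g(Z_1,Z_2')-g(Z_1',Z_2)\bigr),
\]
with no need for flow or dilation invariance. Evaluating this constant Wronskian at $\sigma=0$ with the initial data of Construction~\ref{setting} gives items 3 and 4 with sign $+1$. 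It even gives $\Omega(\xi_i,\eta_j)=\tau\delta_{ij}$ along the whole leaf, which the paper later obtains by scaling. This is the same Wronskian mechanism the paper uses in Proposition~\ref{leafwise}.
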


\section{Construction of complexified metric $\gC$}\label{kcon}
As a key ingredient in constructing the Ricci curvature bound $\psi$, we prove that the metric $g$ on the base manifold $(M,g)$ admits a meromorphic nondegenerate $(2,0)$-extension to the Grauert tube $T^r M$.
\begin{dfnthm}\label{LS}
Let $(M,g)$ be a Riemannian manifold and $T^r M:=\{v \in TM~|~\|v\|_g<r\}$ be a Grauert tube. There exists the nondegenerate meromorphic $(2,0)$-tensor $\gC$ on $T^rM$ obtained as the extension of $g$ to the Grauert tube. We call $\gC$ the \emph{complexified metric}.
\end{dfnthm}
The construction of $\gC$ proceeds as follows. First, since any metric admitting an entire Grauert tube is real-analytic, the metric $g$ extends at least to a small neighborhood $T^\varepsilon M$ of $M$ (Proposition~\ref{epsilon}). Second, on each leaf, parallel vector fields form a holomorphic basis, which allows one to extend $g$ meromorphically along each leaf (Proposition~\ref{leafwise}). Third, as the leaves vary, the extension $\gC$ depends real-analytically on the leaf parameter, and hence is globally real-analytic away from the pole set; since it is holomorphic on $T^\varepsilon M$, it follows that $\gC$ is holomorphic (Proposition~\ref{global}).

Before giving the precise proof, we recall the following key facts.
\begin{rem}\label{continue}
Recall the following two basic facts.
\begin{enumerate}[label=(\arabic*), leftmargin=2.2em]
\item Let $D$ be a connected complex domain. If two real-analytic functions on $D$ agree on a nonempty open subset $U \subset D$,
then they agree on all of $D$.
\item Let $D$ be a connected complex domain and let $S \subset D$ be a totally real subset of maximal dimension.
If two holomorphic functions on $D$ agree on $S$, then they agree on all of $D$.
\end{enumerate}
\end{rem}
\begin{prop}\label{epsilon}
Let $g$ be the metric tensor on $M$. Then there exists $\varepsilon>0$ such that on the Grauert tube $T^\varepsilon M$ there exists a holomorphic $(2,0)$-tensor $\gC$ extending $g$.
\end{prop}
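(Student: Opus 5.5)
The plan is to complexify $g$ in real-analytic local coordinates and then identify the resulting local complex manifold with a small Grauert tube via the Bruhat--Whitney embedding. Since $(M,g)$ admits an entire Grauert tube, $g$ is real-analytic: the adapted complex structure is real-analytic and restricts along $M$ to $g$. Cover $M$ by finitely many real-analytic charts $x=(x^1,\dots,x^n)$ on $U_\alpha$. In each chart write $g=\sum g_{kl}(x)\,dx^k\,dx^l$ with $g_{kl}$ real-analytic.

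First, by Cauchy--Kovalevskaya type power series extension, each $g_{kl}$ extends to a holomorphic function $g_{kl}(z)$ on a neighborhood $\widetilde U_\alpha\subset\bC^n$ of $U_\alpha\subset\bR^n$, with coordinates $z=x+iy$. Set
\[
\gC^{(\alpha)}:=\sum_{k,l} g_{kl}(z)\,dz^k\otimes dz^l .
\]
This is a holomorphic symmetric $(2,0)$-tensor. Next, I need a holomorphic identification of $\widetilde U_\alpha$ with a neighborhood of $U_\alpha$ in the Grauert tube. Since $M$ is a maximal-dimensional totally real real-analytic submanifold of the complex manifold $T^rM$, each real-analytic chart $x$ extends uniquely to a holomorphic chart $z$ on a neighborhood of $U_\alpha$ in $T^rM$. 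Uniqueness follows from Remark~\ref{continue}(2). So $\gC^{(\alpha)}$ is defined as a holomorphic tensor on an open neighborhood $W_\alpha\subset T^rM$ of $U_\alpha$.

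Second, I glue the local tensors. On $W_\alpha\cap W_\beta$, the tensors $\gC^{(\alpha)}$ and $\gC^{(\beta)}$ are holomorphic, and both restrict on $M\cap W_\alpha\cap W_\beta$ to $g$. More precisely, the coefficient functions in a common holomorphic chart agree on the totally real set $M$, since the holomorphic transition maps restrict to the real ones, and $dz^k$ restricted to $TM$ is $dx^k$. By Remark~\ref{continue}(2), applied on each connected component of $W_\alpha\cap W_\beta$ meeting $M$, they coincide there. To avoid components not meeting $M$, I shrink: by compactness of $M$ there is $\varepsilon>0$ such that every point of $T^\varepsilon M$ lies in some $W_\alpha$. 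Moreover, the fibre segments $\{tv: 0\le t\le 1\}$ allow each point of $T^\varepsilon M\cap W_\alpha\cap W_\beta$ to be joined to $M$ inside the intersection. This needs a Lebesgue-number argument: choose $W_\alpha$ to contain $T^{\varepsilon}(U'_\alpha)$ for a shrunken cover $U'_\alpha$. The glued tensor $\gC$ is then a well-defined holomorphic $(2,0)$-tensor on $T^\varepsilon M$ extending $g$.

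Finally, nondegeneracy near $M$ follows because $\det(g_{kl}(z))$ is holomorphic and nonzero on $M$; after possibly decreasing $\varepsilon$ it stays nonzero, though the statement only requires the extension. The main obstacle is the gluing step, in particular ensuring that chart overlaps in the tube are connected to the real locus so that the identity principle applies. I would handle this with the uniform-$\varepsilon$ compactness argument sketched above, using fibrewise star-shapedness of $T^\varepsilon(U'_\alpha)$.
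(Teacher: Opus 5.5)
Your proposal is correct and follows essentially the same route as the paper. Both arguments complexify the coefficients $g_{kl}$ in holomorphic charts adapted to the totally real submanifold $M$, set $\gC=\sum g_{kl}(z)\,dz^k\otimes dz^l$, and glue via the identity principle of Remark~\ref{continue}(2) on a uniformly thin tube $T^\varepsilon M$. Your fibrewise star-shapedness step works on $T^\varepsilon(U'_\alpha\cap U'_\beta)$, every component of which contains points of the zero section. This fills in a detail the paper leaves implicit when it applies the identity principle ``componentwise'' on chart overlaps, whose components need not meet $M$ a priori.
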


\begin{proof}
Using that $M$ is a real-analytic totally real submanifold of $TM$, for each point of $M$ choose a local holomorphic coordinate system $\{z_i\}_{i=1}^n$ on $TM$ such that $x_i:=\Re(z_i)$ restricts to a real-analytic coordinate system on $M$.
In these coordinates we may write
\[
g=\sum_{i,j=1}^n g_{ij}(x)\,dx_i\otimes dx_j.
\]
Since $M$ is real-analytic, after shrinking the chart if necessary each coefficient $g_{ij}$ admits a holomorphic extension to a neighborhood of $M$ in $TM$. Denote these extensions by $g_{\bC ij}(z)$ and define
\[
\gC:=\sum_{i,j=1}^n g_{\bC ij}(z)\,dz_i\otimes dz_j.
\]
Then $\gC|_M=g$ because $dz_i|_M=dx_i$.

Next choose $\varepsilon>0$ sufficiently small so that $T^\varepsilon M$ is covered by such neighborhoods. It remains to check that $\gC$ is globally well-defined on $T^\varepsilon M$. On the overlap of two charts $C_1$ and $C_2$, the locally defined tensors restrict to the same $g$ on $M$. Hence the difference $\gC^{(1)}-\gC^{(2)}$ vanishes on $M\cap(C_1\cap C_2)$. By Remark \ref{continue}(2) applied componentwise, it vanishes on all of $C_1\cap C_2$. Therefore the transition is consistent and $\gC$ is globally well-defined on $T^\varepsilon M$.
\end{proof}

\begin{prop}\label{leafwise}
Let $\gamma$ be a geodesic in $M$ and let $\phi_\gamma$ denote its complexified geodesic in $T^rM$. Then $\gC$ extends to all of $\phi_\gamma$ as a meromorphic nondegenerate $(2,0)$-tensor.
\end{prop}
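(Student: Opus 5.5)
The idea is to express $\gC$ in the holomorphic frame $\Xi^{1,0}$ along the leaf and to read off its coefficients from data on the real geodesic, where $\gC=g$. Those coefficients are real-analytic functions of $\sigma$ whose analytic continuation is governed by the complex transition matrix $\Phi$.

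Work in the setting of Construction~\ref{setting}. On the non-real part of $\phi_\gamma$, the entries of $\Xi^{1,0}$ form a holomorphic frame of $T^{1,0}TM|_{\phi_\gamma}$, so any holomorphic $(2,0)$-tensor restricted to the leaf is determined by the matrix $G(\zeta):=\bigl(\gC(\xi_i^{1,0},\xi_j^{1,0})\bigr)_{ij}$ as a function of $\zeta=\sigma+i\tau$.

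Near the real axis, inside $T^\varepsilon M$, the tensor $\gC$ already exists by Proposition~\ref{epsilon}. There $G$ is holomorphic in $\zeta$, because $\xi_i^{1,0}$ is a holomorphic vector field along the holomorphic curve $\phi_\gamma$. On $\tau=0$ we have $\xi_i^{1,0}=\tfrac12(\xi_i-iJ\xi_i)$. We also use that the restriction of a $(2,0)$-tensor to the real directions along $M$ equals $g$, and that it is complex bilinear. Together these give $G(\sigma)=\tfrac14\,g(\Xi(\sigma),\Xi(\sigma))$ up to the standard normalisation, i.e. the Gram matrix of the Jacobi fields $\xi_i(\sigma)$. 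Here some care is needed: the vector $\xi_i(\sigma)\in T_{\gamma'(\sigma)}TM$ has to be split into its horizontal part (the Jacobi field value) and its vertical part, and only the pairing of the complexified real directions enters. I would first check this identification carefully, using $\gC|_M=g$ and $dz_i|_M=dx_i$.

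The Gram matrix $\mathcal{G}(\sigma):=\bigl(g(\xi_i(\sigma),\xi_j(\sigma))\bigr)$ is real-analytic in $\sigma$. To continue it, I would write the Jacobi fields in a parallel orthonormal frame $E$ along $\gamma$, so that $\Xi=E\,Y(\sigma)$ and $H=E\,Z(\sigma)$, where $Y,Z$ solve the Jacobi matrix ODE $Y''+R(\sigma)Y=0$. Then $\mathcal{G}=Y^{T}Y$ and $A=Y^{-1}Z$. The key claim is that $Y$, $Z$ and $R$ continue holomorphically to all of $\bC$ along the entire leaf. The entire-tube hypothesis gives that $\Phi$ (the continuation of $A$) is defined on the whole leaf with $\Im\Phi$ invertible off the real axis. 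Using Wronskian-type identities from Theorem~\ref{egtprop}(3),(4), namely $Y^TZ'-Y'^TZ=I$ and the symmetry relations, I would try to express $Y^TY$ rationally in terms of $\Phi$ and its derivatives. One candidate is $(Y^TY)^{-1}=A'=\Phi'|_{\bR}$, coming from $(Y^{-1}Z)'=Y^{-1}(Y^T)^{-1}$, which is a standard Jacobi identity. So $G(\zeta)$ is proportional to $\Phi'(\zeta)^{-1}$, which is meromorphic on the leaf as the inverse of a holomorphic matrix, and $G$ is nondegenerate wherever it is finite since $\det\Phi'^{-1}\neq0$.

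Define $\gC$ on $\phi_\gamma$ by $\gC(\xi_i^{1,0},\xi_j^{1,0}):=c\,(\Phi'^{-1})_{ij}$. By Remark~\ref{continue}(2) this agrees with the tensor of Proposition~\ref{epsilon} near $\bR$, since the two agree on the totally real axis. That gives the extension.

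The main obstacle is justifying that $\Phi'$ (equivalently the relevant combination of $\Phi$) is holomorphic on the whole leaf, including at real points. Points where $Y$ degenerates (conjugate points) make $A$ singular there. So I would treat $A=Y^{-1}Z$ as meromorphic on $\bR$ with poles at conjugate points, and use that $\Phi$ is meromorphic along the leaf by the entire-tube assumption. The conclusion "meromorphic, nondegenerate" then follows, with the pole set contained in the zeros of $\det\Phi'$, or of $\det Y$, after continuation.
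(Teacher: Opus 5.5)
Your proposal is correct and follows essentially the paper's proof. The Wronskian identity gives $g(\Xi,\Xi)=(A')^{-1}$ on the real geodesic; there the leaf lies in the zero section, so $\xi_i$ is simply the Jacobi field, with no vertical part, and $c=1$ under the convention $dz_i|_M=dx_i$. One then sets $\gC(\Xi^{1,0},\Xi^{1,0}):=(\Phi')^{-1}$ off the real locus, and the identity theorem matches this with the tensor of Proposition~\ref{epsilon} near $\bR$. Your ``key claim'' that $Y$, $Z$, $R$ continue holomorphically to all of $\bC$ is not needed and in fact fails in general: on Sz\H{o}ke's surfaces $M_d$ the Jacobi fields carry the factor $(d\sin^2 s+1)^{-1/2}$, which branches exactly where $\Phi'_\perp$ vanishes. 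Nor do you need holomorphy of $\Phi'$ at real points, since near $\bR$ the tensor is already supplied by Proposition~\ref{epsilon}.
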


\begin{proof}
We first consider the geometry on the real locus $(M,g)$. For $n$-tuples of vector fields $V=(V_1,\dots,V_n)$ and $U=(U_1,\dots,U_n)$, define the matrix $g(V,U)$ as
\[
g(V,U)_{ij} := g(V_i, U_j).
\]
If $V$ and $U$ consist of Jacobi fields, then their Wronskian
\[
W(V,U) := g(V',U) - g(V,U')
\]
is constant. Indeed, writing $R$ for the Jacobi operator, we compute
\[
\begin{aligned}
W'(V,U) &= \bigl(g(V',U) - g(V,U')\bigr)' = g(V'',U) + g(V',U') - g(V',U') - g(V,U'') \\
&= g(RV,U) - g(V,RU) = 0.
\end{aligned}
\]

Now consider Construction~\ref{setting}. On the real locus at $\sigma=0$, we note that $W(\Xi,\Xi)=0$ and $W(H,\Xi)=I$. Since $H = \Xi A$, we have
\[
\begin{aligned}
W(H,\Xi)
&= g(H',\Xi) - g(H,\Xi') = g(\Xi' A + \Xi A',\Xi) - g( \Xi A,\Xi') \\
&= g(\Xi,\Xi) A' - W(\Xi,\Xi)A = g(\Xi,\Xi) A' = I.
\end{aligned}
\]
Thus, on the real locus (where defined), we obtain
\[
g(\Xi,\Xi) = (A')^{-1}.
\]

Recall that the entries of $\Xi^{1,0}$ form a holomorphic frame along each leaf $\phi_\gamma$ away from the real locus. For $\Phi$, the holomorphic extension of $A$, define
\[
\gC(\Xi^{1,0}, \Xi^{1,0}) := (\Phi')^{-1},
\]
on the non-real locus of $\phi_\gamma$, and set $\gC|_{\mathbb{R}} = g$. Since $\Phi'$ is well-defined on the non-real locus, this defines $\gC$ as a meromorphic nondegenerate $(2,0)$-tensor.

Finally, this extension does not depend on the choice of Construction \ref{setting}. For any such choice, on $T^\varepsilon M\setminus M$, the resulting extension of $g$ agrees with the tensor $\gC$ already defined on $T^\varepsilon M$ from Proposition \ref{epsilon}.
By Remark \ref{continue} (1), any two extensions obtained from different choices coincide on the whole leaf.
\end{proof}

\begin{rem}\label{n-1}
When $\gC$ is expressed as a matrix with respect to the parallel vector fields $\Xi$, the essential information is contained in the $(n-1)\times(n-1)$ block. On the real locus, note that $g(\xi_n,\xi_n)=1$ and $g(\xi_i,\xi_n)=0$ for $1 \le i \le n-1$, since $\xi_i$ is a normal Jacobi field. Hence, for its extension $\gC$, one has $\gC(\xi_n^{1,0}, \xi_n^{1,0})=1$ and $\gC(\xi_i^{1,0}, \xi_n^{1,0})=0$ for $1 \le i \le n-1$. 

Accordingly, we will denote the $(n-1)\times(n-1)$ block of $\Phi'$ corresponding to the normal direction by $\Phi'_\perp$.
\end{rem}

\begin{prop}\label{global}
The tensor $g$ extends to all of $T^r M$ as a meromorphic nondegenerate $(2,0)$-tensor $\gC$.
\end{prop}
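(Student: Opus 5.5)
We glue the leafwise extensions of Proposition~\ref{leafwise} into a global object and then check that it is holomorphic. First, define $\gC$ on $T^rM\setminus M$ leaf by leaf, using Proposition~\ref{leafwise}. Every point of $T^rM\setminus M$ lies on exactly one leaf $\phi_\gamma(\mathbb{C}\setminus\mathbb{R})$. Proposition~\ref{leafwise} shows that the leafwise tensor does not depend on the choice of frame in Construction~\ref{setting}. Hence this gives a well-defined $(2,0)$-tensor away from a pole set $S$. Here $S$ is the union over all leaves of the points where $\Phi'$ (equivalently $\Phi'_\perp$, by Remark~\ref{n-1}) is not invertible or not defined.

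Second, we show that this tensor is real-analytic on $T^rM\setminus(M\cup S)$. Locally the leaves are parametrized real-analytically by unit tangent vectors $v\in SM$, with a real-analytic choice of orthonormal frame $e_1,\dots,e_n=v$. For this choice:
\begin{itemize}
\item the Jacobi fields $\xi_i,\eta_j$, and hence the matrix $A$, depend real-analytically on $(v,\sigma)$, since $g$ is real-analytic;
\item $\Phi$ is the fiberwise holomorphic continuation of $A$ in $\sigma+i\tau$, so it depends real-analytically on $(v,\sigma,\tau)$, since holomorphic extension in one variable preserves joint real-analyticity in the parameters;
\item the frames $\Xi^{1,0}$ are real-analytic in all variables, because $J$ is real-analytic.
\end{itemize}
It follows that the coefficients of $\gC$ in a fixed real-analytic frame of $T^{1,0}(T^rM)$ are real-analytic functions, or quotients of such, with denominator $\det\Phi'$. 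So $S$ is contained in the zero set of a real-analytic function that is not identically zero, and $\gC$ is real-analytic off $S$.

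Third, we prove that $\gC$ is holomorphic off $S$ and meromorphic across $S$. Take local holomorphic coordinates $z$ on $T^rM$. The components $\gC(\partial_{z_i},\partial_{z_j})$ are real-analytic on the connected set $T^rM\setminus(M\cup S)$. On $T^\varepsilon M\setminus M$ they coincide with the holomorphic extension of Proposition~\ref{epsilon}, again by the uniqueness argument used in Proposition~\ref{leafwise}. There, therefore, $\bar\partial$ of each component vanishes. Since $\bar\partial$ of a component is itself real-analytic, Remark~\ref{continue}(1) shows that it vanishes everywhere off $S$. Next multiply by $\det\Phi'$, after expressing it via $\det\gC(\partial_{z_i},\partial_{z_j})^{-1}$. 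This shows the components are locally quotients of holomorphic functions. In particular $S$ is an analytic set and $\gC$ is meromorphic. Nondegeneracy holds off $S$, since $\gC(\Xi^{1,0},\Xi^{1,0})=(\Phi')^{-1}$ is invertible there.

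The main obstacle will be the connectedness and analyticity needed in the third step. We must know that $T^rM\setminus(M\cup S)$ is connected and that $S$ is a proper analytic (or at least thin) subset, so that the identity principle propagates holomorphicity from $T^\varepsilon M$. For $n\ge2$ the set $M$ has real codimension $n\ge 2$, so removing it does not disconnect. The first thing I would try for $S$ is to show it is the zero locus of the single real-analytic function $\det\Phi'$ (suitably normalized), which is nonvanishing near $M$ because $A'$ is invertible at $\sigma=0$ by $g(\Xi,\Xi)=(A')^{-1}$. A nontrivial real-analytic zero set has empty interior and does not disconnect the domain in the relevant sense, after passing to a local argument near $S$ by Hartogs/Riemann extension applied to $\det(\Phi')\,\gC$.
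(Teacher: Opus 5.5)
Your strategy is the paper's: define $\gC$ leafwise by Proposition~\ref{leafwise}, show it depends real-analytically on the leaf parameters, and propagate $\bar\partial\gC=0$ from $T^\varepsilon M$ using Remark~\ref{continue}(1). The paper applies the identity principle ``where $\gC$ is finite'' without comment, so you are right that connectedness of $T^rM\setminus(M\cup S)$ is silently being used. However, the justification you sketch does not establish it. A nontrivial real-analytic zero set can disconnect a domain (a real-analytic hypersurface does), so ``empty interior'' is not enough. A Riemann/Hartogs extension across $S$ presupposes holomorphy on \emph{every} component of the complement, which is exactly what is in doubt. Moreover, $\det\Phi'$ is holomorphic only along leaves, so $\det(\Phi')\,\gC$ is not a holomorphic object to which such an extension theorem applies.

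The clean repair is to apply the identity principle to objects without poles, so that $S$ never has to be removed. Put $h:=(\det\gC)^{-1}$, a section of $K_{TM}^{-2}$. In the frame $\Xi^{1,0}$, $h$ and $h\,\gC$ are represented by $\det\Phi'$ and $\mathrm{adj}(\Phi')$, so they are real-analytic on all of $T^rM\setminus M$. Local expressions from different frames agree off $\{\det\Phi'=0\}$. That set is nowhere dense: it meets each leaf in a discrete set, because $\Phi'$ is invertible near the real axis. Hence the expressions agree everywhere. Every component of $T^rM\setminus M$ meets $T^\varepsilon M$, where $h$ and $h\gC$ are holomorphic (shrink $\varepsilon$ so that $\det\gC\neq 0$ there). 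Therefore $\bar\partial h=0$ and $\bar\partial(h\gC)=0$ on all of $T^rM\setminus M$. It follows that $\gC=(h\gC)/h$ is meromorphic, its pole set $S=\{h=0\}$ is a complex hypersurface, and nondegeneracy off $S$ follows as you say. Alternatively, $S$ is closed, real-analytic and meets each $2$-dimensional leaf of this real-analytic foliation in a discrete set, so $\dim_{\mathbb{R}}S\le 2n-2$; this also yields the connectedness your third step needs.
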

\begin{proof}
Since $T^r M\setminus M$ is uniquely covered by the leaf foliation given by complexified geodesics $\phi_\gamma$, we define $\gC$ on each leaf by the extension constructed in Proposition \ref{leafwise} and verify that the resulting tensor is meromorphic on $T^r M$.
Fix a unit-speed geodesic $\gamma$ with $\gamma(0)=p$, $\gamma'(0)=v$, and an orthonormal frame $v_1,\dots,v_{n-1}$ perpendicular to $v$.
Geodesics near $\gamma$ can be described by the following $(2n-2)$-parameter family.
Let $N_vM$ be the normal space to $v$ in $T_pM$ and take $u,w\in N_vM$.
Let $P_u$ denote parallel transport along the curve $t\mapsto \exp_p(tu)$ from $p$ to $\exp_p u$.
Define
\[
\gamma(u,w)(s):=\exp_{\exp_pu}\Bigl(s\,P_u\!\left(\frac{v+w}{\sqrt{1+\|w\|^2}}\right)\Bigr).
\]
Using this parametrization we can choose the nearby geodesic $\gamma(u,w)$ and its initial point $\gamma(u,w)(0)$ real-analytically in $(u,w)$.
Next select an orthonormal frame $v_i(u,w)$ perpendicular to $\gamma(u,w)'(0)=v(u,w)$ as follows.
Apply the Gram--Schmidt process to $(v+w,v_1,\dots,v_{n-1})$ to obtain
\[
\frac{v+w}{\|v+w\|},\;v_1',\dots,v_{n-1}'.
\]
Define
\[
v_i(u,w):=P_u(v_i').
\]
This choice depends real-analytically on $(u,w)$, hence the corresponding parallel vector fields along $\gamma(u,w)$ vary analytically.
Therefore $\gC$ varies analytically and thus is real-analytic on $T^r M$.
On the tube $T^\varepsilon M$, Proposition \ref{epsilon} gives that $\gC$ is holomorphic, hence $\bar\partial \gC=0$ on $T^\varepsilon M$.
Since $\bar\partial \gC$ is real-analytic on $T^r M$, by Remark \ref{continue} (1) we conclude $\bar\partial \gC=0$ where $\gC$ is finite.
Thus $\gC$ is meromorphic on $T^r M$, and by Proposition \ref{leafwise}, it is nondegenerate away from its pole set.
\end{proof}






\section{Algebraization of entire Grauert tube}
We now construct a strictly plurisubharmonic exhaustion $\phi$ and a Ricci curvature bound $\psi$ for Theorem \ref{main}.
\subsection{Strictly plurisubharmonic exhaustion $\phi$}
\begin{defn}
Let $(M,g)$ be a compact Riemannian manifold admitting an entire Grauert tube $TM$. 
For $E(v):=\tfrac{1}{2}\|v\|_g^2$ and $u(v):=\sqrt{2E(v)}$ on $TM$, define
\[
\phi := \log\bigl(1+\cosh(u)\bigr) = \log\bigl(1+\cosh(\sqrt{2E})\bigr).
\]
\end{defn}

\begin{prop}
Let $\mathrm{Vol}(B_1^n)$ denote the Euclidean volume of the unit ball in $\mathbb{R}^n$. Then
\[
\int_{TM} (dd^c\phi)^n = n!\,\mathrm{Vol}(M,g)\,\mathrm{Vol}(B_1^n).
\]
\end{prop}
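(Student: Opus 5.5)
The plan is to reduce everything to the homogeneous Monge--Amp\`ere equation for $u=\sqrt{2E}$ and a one-variable computation. Write $\phi=f(u)$ with $f(t):=\log(1+\cosh t)$. Note that $\phi$ is smooth on all of $TM$, since $\cosh\sqrt{2E}$ is a power series in $E$. Also $M=\{u=0\}$ has measure zero for the smooth form $(dd^c\phi)^n$. It therefore suffices to integrate over $\{\varepsilon<u<R\}$ and let $\varepsilon\to0$, $R\to\infty$.

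On $TM\setminus M$ the function $u$ is smooth, and the chain rule gives
\[
dd^c f(u)=f''(u)\,du\wedge d^cu+f'(u)\,dd^cu .
\]
Since $du\wedge d^cu$ has square zero, expanding the $n$-th power leaves only two terms:
\[
(dd^cf(u))^n=f'(u)^n(dd^cu)^n+n\,f'(u)^{n-1}f''(u)\,du\wedge d^cu\wedge(dd^cu)^{n-1}.
\]
The first term vanishes by HCMA. By the coarea formula the integral over $\{\varepsilon<u<R\}$ becomes
\[
\int_\varepsilon^R n f'(r)^{n-1}f''(r)\,c(r)\,dr,\qquad c(r):=\int_{\{u=r\}}d^cu\wedge(dd^cu)^{n-1}.
\]
Stokes' theorem applied on $\{r_1<u<r_2\}$, together with $d\bigl(d^cu\wedge(dd^cu)^{n-1}\bigr)=(dd^cu)^n=0$, shows that $c(r)\equiv c$ is constant. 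The integral then telescopes to $c\bigl(f'(R)^n-f'(\varepsilon)^n\bigr)$. Here $f'(t)=\sinh t/(1+\cosh t)=\tanh(t/2)$, which tends to $0$ as $t\to0$ and to $1$ as $t\to\infty$. Hence $\int_{TM}(dd^c\phi)^n=c$.

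To evaluate $c$, I run the same computation with $f(t)=t^2$ on the Grauert tube $T^RM=\{u<R\}$. The telescoping gives
\[
\int_{T^RM}(dd^cu^2)^n=c\,(2R)^n .
\]
On the other hand $u^2=2E$, and with the convention $dd^c=2i\partial\bar\partial$, Theorem~\ref{egtprop}(2) gives $dd^cE=\Omega$, so $dd^cu^2=2\Omega$. Thus the left side equals $2^n\int_{T^RM}\Omega^n$. Now $\Omega^n=n!$ times the Liouville volume form; fixing the sign by orientation, this is positive since $dd^cu^2$ is K\"ahler. In fiberwise-orthonormal coordinates the Liouville form is $dx\,dv$, so the Liouville volume of $\{\|v\|_g<R\}$ is $\mathrm{Vol}(M,g)\,\mathrm{Vol}(B_1^n)R^n$. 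Comparing the two expressions yields $c=n!\,\mathrm{Vol}(M,g)\,\mathrm{Vol}(B_1^n)$, which is the claim.

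The main obstacle is justifying the behaviour near $M$, where $u$ is not smooth. The telescoping needs the boundary term at $\{u=\varepsilon\}$ to vanish, and this holds because $f'(\varepsilon)^n\,c\to0$ with $c$ a fixed finite constant. The constancy of $c$ is needed only on $TM\setminus M$, where $u$ is smooth. The remaining care is in tracking normalizations: the constant $n!$ relating $\Omega^n$ to the Liouville measure, and the factor $2$ in $dd^c=2i\partial\bar\partial$. These must match the paper's conventions; I would double-check them against the flat torus case, where everything is explicit.
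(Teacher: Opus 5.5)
Your argument is correct, but it follows a different route from the paper. The paper works pointwise. It expands $dd^c\phi$ in terms of $2\idd E$ and $2i\,\partial E\wedge\bar\partial E$, and uses HCMA in the form $(\idd E)^n=\frac{n}{u^2}(\idd E)^{n-1}\wedge i\partial E\wedge\bar\partial E$ to get the explicit density $(dd^c\phi)^n=\frac{\sinh^{n-1}u}{u^{n-1}(1+\cosh u)^n}\,\Omega^n$. It then integrates in fiberwise polar coordinates against $\Omega^n/n!=d\mathrm{Vol}_g\,dV_x$, evaluating the radial integral via the primitive $\frac1n\tanh^n(u/2)$.

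You instead use a flux argument. HCMA makes $d^cu\wedge(dd^cu)^{n-1}$ closed off $M$, so its flux $c$ through the level sets $\{u=r\}$ is constant; this is the total mass of Demailly's measure $\mu_r$ for the exhaustion $u$. The coarea formula then makes the mass of $(dd^cf(u))^n$ telescope to $c\,(f'(\infty)^n-f'(0)^n)$. You calibrate $c$ with $f(t)=t^2$, which needs only the Liouville volume of a disk bundle.

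The two computations share the same core identity: your $nf'^{n-1}f''=(f'^n)'$ with $f'(u)=\tanh(u/2)$ is exactly the paper's primitive. Your version isolates a structural fact without any pointwise algebra. For $f'(0^+)=0$, the Monge--Amp\`ere mass of $f(u)$ depends only on the asymptotic slope:
\[
\int_{TM}(dd^cf(u))^n=n!\,\mathrm{Vol}(M,g)\,\mathrm{Vol}(B_1^n)\,\bigl(\lim_{t\to\infty}f'(t)\bigr)^n.
\]
The paper's route instead produces the explicit density against $\Omega^n$, and the identical computation is reused later for $\beta^n$ in the growth estimate for $\psi$. Your expansion also recovers this density if you compare with the case $f(t)=t^2/2$, where $dd^c(u^2/2)=\Omega$: you get $(dd^cf(u))^n=u^{1-n}f'(u)^{n-1}f''(u)\,\Omega^n$.

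One small point you should state explicitly. Exchanging the limit $\varepsilon\to0$, $R\to\infty$ with the integral over $TM$ uses that $(dd^c\phi)^n\ge0$ (since $\phi$ is plurisubharmonic), so monotone convergence applies.
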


\begin{proof}
A direct computation gives
\[
dd^c\phi=2\idd\phi=\frac{\sinh u}{u(1+\cosh u)}\,2\idd E+
\frac{u-\sinh u}{u^3(1+\cosh u)}
2i\,\partial E\wedge\bar\partial E .
\]
Using the Monge--Amp\`ere equation $(\idd u)^n=(\idd\sqrt{2E})^n=0$, we have
\begin{align}
(\idd u)^n 
&= (\idd \sqrt{2E})^n = \Bigl(\frac{1}{\sqrt{2E}}\idd E - \frac{1}{2E\sqrt{2E}}\, i\partial E \wedge \bar\partial E\Bigr)^n\notag \\
&= \frac{1}{u^n}(\idd E)^n - \frac{n}{u^{n+2}}(\idd E)^{n-1}\wedge (i\partial E \wedge \bar\partial E) = 0. \label{MA}
\end{align}
In addition, $2\idd E=\Omega$ (Theorem \ref{egtprop}, (2)), hence we have
\[
(dd^c\phi)^n=\frac{\sinh^{\,n-1}u}{u^{\,n-1}(1+\cosh u)^n}(2\idd E)^n=\frac{\sinh^{\,n-1}u}{u^{\,n-1}(1+\cosh u)^n} \Omega^n .
\]

The symplectic volume form $\frac{\Omega^n}{n!}$ decomposes at $(x,v)\in TM$ as
\[
\frac{\Omega^n}{n!}=d\mathrm{Vol}_g(x)\,dV_x(v),
\]
where $dV_x$ denotes the Euclidean measure on $T_xM$. Writing $dV_x$ in radial form and noting that $u$ corresponds to the fiberwise norm, we obtain
\[
dV_x=u^{n-1}\,du\,dS_x,
\]
where $dS_x$ is the standard area measure on the unit sphere $S_x^{n-1}$.

Combining these expressions yields
\begin{align*}
\int_{TM}(dd^c\phi)^n
&=\int \frac{\sinh^{\,n-1}u}{(1+\cosh u)^n}\,n!\,du\,dS_x\,d\mathrm{Vol}_g\\
&=n!\,\mathrm{Vol}(S^{n-1}_1)\,\mathrm{Vol}(M,g)\int_0^\infty \frac{\sinh^{\,n-1}u}{(1+\cosh u)^n}du\\
&=n!\,\mathrm{Vol}(S^{n-1}_1)\,\mathrm{Vol}(M,g)\left [\frac{1}{n}\tanh^n\frac{u}{2} \right ]_{u=0}^\infty=n!\,\mathrm{Vol}(B_1^n)\,\mathrm{Vol}(M,g).
\end{align*}
\end{proof}

\subsection{Ricci curvature bound $\psi$}

Finally, we complete the proof of Theorem~\ref{egt} by constructing the following function $\psi$ and applying Theorem~\ref{main}\,(i).

\begin{defn}
Let $\beta := dd^c e^\phi = dd^c(1+\cosh(u)) = dd^c(1+\cosh(\sqrt{2E}))$. Let $\gC$ be the complexified metric. For the meromorphic nonvanishing section $\det(\gC)$ of $K_{TM}^{2}$, we define the Ricci curvature bound function $\psi$ by
\[
\psi := -\log \| \det(\gC) \|_\beta.
\]
\end{defn}

Locally, $\psi$ can be written as $\psi = \log |\det(\beta)| - \log |\det(\gC)|$. Then $\idd \log |\det(\beta)| = -\Ric(\beta)$, while the second term is pluriharmonic away from the poles of $\det(\gC)$, i.e.\ the zero locus of $\det(\Phi')$, and defines a plurisubharmonic function in the sense of $(1,1)$-currents globally. Therefore, $\idd \psi + \Ric(\beta) \ge 0$. Hence, in order to apply Theorem~\ref{main}~(i), it remains to estimate the growth of $\psi$.

\begin{prop}
There exist constants $A,B>0$ such that $\psi \le A \phi + B$.
\end{prop}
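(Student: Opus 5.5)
The plan is to compute $\|\det(\gC)\|_\beta$ pointwise in the holomorphic frame $\Xi^{1,0}$ of Construction~\ref{setting}, and reduce the inequality $\psi\le A\phi+B$ to an exponential growth bound for two matrix-valued functions of the height $u$. On $M$ itself, $\psi$ is continuous near the compact set $M$, so it suffices to work on $TM\setminus T^{\varepsilon}M$ and to note that $\phi\ge u-\log 2$ there. Fix a point $\phi_\gamma(\sigma+i\tau)$ with $\tau=u>0$. In the frame $\Xi^{1,0}$ we have
\[
\|\det\gC\|_\beta=\frac{|\det \gC(\Xi^{1,0},\Xi^{1,0})|}{\det\beta(\Xi^{1,0},\overline{\Xi^{1,0}})}=\frac{1}{|\det\Phi'|\,\det\beta(\Xi^{1,0},\overline{\Xi^{1,0}})},
\]
using $\gC(\Xi^{1,0},\Xi^{1,0})=(\Phi')^{-1}$ from Proposition~\ref{leafwise}. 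Hence
\[
\psi=\log\det\beta(\Xi^{1,0},\overline{\Xi^{1,0}})+\log|\det\Phi'|,
\]
and it suffices to bound each term above by $Cu+C'$.

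For the $\beta$ term, I would expand $\beta=dd^c(\cosh u)$ as in the volume computation. This gives the $\Omega$-part with coefficient $\sinh u/u$, plus a radial part which, by Theorem~\ref{egtprop}(1), only affects the leaf direction $\xi_n$. By Remark~\ref{n-1} the determinant then splits into the leaf entry, of size $\lesssim\cosh u$, times an $(n-1)\times(n-1)$ block $(\sinh u/u)^{n-1}\det\Omega(\Xi_\perp,J\Xi_\perp)$. Using \eqref{J}, $\Omega(\Xi,J\Xi)=\Omega(\Xi,H)B^{-1}$, since $\Omega(\Xi,\Xi)=0$ by Theorem~\ref{egtprop}(3). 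The matrix $\Omega(\Xi,H)$ is computed from Theorem~\ref{egtprop}(4) together with invariance: the geodesic flow preserves $\Omega$, and $N_\tau^*\Omega=\tau\Omega$, so $\Omega(\Xi,H)$ at height $\tau$ is an explicit power of $\tau$ times $I$. Thus the first term is bounded by
\[
Cu+C'-\log\det B_\perp(\sigma+i\tau),
\]
where $B_\perp$ is positive definite because the Kähler form is positive.

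The problem is therefore reduced to showing
\[
\log|\det\Phi'_\perp|-\log\det B_\perp\le Cu+C'
\]
uniformly on $TM\setminus T^\varepsilon M$. Invariance under the geodesic flow lets us move $\sigma$ to $0$ at the cost of changing the geodesic, so the quantity only needs to be controlled on the imaginary axis $\{i\tau\}$ of leaves indexed by the compact unit sphere bundle $SM$. The estimate is then uniform in the leaf once it is proved with constants depending continuously on $v\in SM$, which follows from the real-analytic dependence established in Proposition~\ref{global}.

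The main obstacle is the growth in $\tau$. I would first try to exploit that $(TM,M,u)$ is entire, so $B=\Im\Phi$ is invertible on the whole upper half of each leaf. The function $\Phi\,\Omega(\Xi,H)^{-1}$-type Cayley transform then maps the upper half-plane into the Siegel upper half-space, since $B>0$. Applying a Schwarz--Pick/Nevanlinna (Herglotz) representation to this map should give that $\|\Phi'(i\tau)\|$ and $\|B(i\tau)^{-1}\|$ grow at most polynomially in $\tau$, with constants controlled by the values near $\tau=\varepsilon$ and hence uniform over $SM$. Even exponential growth would suffice. If the Herglotz argument fails for the full matrix, the fallback is to bound $\log|\det\Phi'_\perp|$ by subharmonicity on half-discs of radius comparable to $\tau$, combined with the a priori bound $|\Phi|\le e^{C\tau}$ coming from Jacobi field growth under the curvature bound on compact $M$.
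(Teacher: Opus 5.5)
Your proposal is essentially the paper's proof. You compute $\psi$ in the frame $\Xi^{1,0}$, reduce to bounding $|\det\Phi'|/\det B$ on each leaf, and use that $\Phi$ is a symmetric matrix Herglotz function: its symmetry comes from the vanishing Wronskian, and $B=\Im\Phi>0$ on the upper half-plane. The map to apply this to is $\Phi$ itself, not $\Phi\,\Omega(\Xi,H)^{-1}=\Phi/\tau$, which is not holomorphic.

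The paper uses the Herglotz representation more sharply. It obtains the pointwise inequality $|v^*\Phi'(z)v|\le\tau^{-1}v^*B(z)v$, hence $|\det\Phi'(z)/\det B(z)|\le\tau^{-n}$ with a universal constant. That makes your reduction to $\sigma=0$, the compactness argument over $SM$, and the separate treatment of $T^\varepsilon M$ unnecessary. Your separate polynomial bounds on $\|\Phi'(i\tau)\|$ and $\|B(i\tau)^{-1}\|$ in terms of $B(i\varepsilon)$ do follow from the same representation, so the fallback is not needed.
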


\begin{proof}
Assume Construction~\ref{setting}. It suffices to show that there exist constants $A,B>0$ independent of the choice of leaf $\phi_\gamma(\sigma+i\tau)$. By conjugation symmetry, it suffices to work on the upper half-plane $\tau>0$, where $u=\tau$.

Locally, $\psi = \log |\det(\beta)| - \log |\det(\gC)|$. On each leaf, away from the real locus (and additionally near $\sigma+i\tau=0$), the vector fields $\xi_1^{1,0},\dots,\xi_n^{1,0}$ form a holomorphic frame. We compute each term with respect to this basis.

\medskip
\noindent
(1) The term $-\log |\det(\gC)|$.
By Proposition \ref{leafwise}, on the basis $\xi_1^{1,0},\dots,\xi_n^{1,0}$, we have $\det(\gC)=\det((\Phi')^{-1})=1/\det(\Phi')$.

\medskip
\noindent
(2) The term $\log |\det(\beta)|$.
Writing $\beta$ in terms of $E$ and $u=\sqrt{2E}$, we have
\begin{align*}
\beta 
&= dd^c e^\phi = 2\idd(1+\cosh\sqrt{2E}) \\
&= \frac{1}{u}\sinh(u)\,2\idd E 
+ \Bigl(\frac{1}{u^2}\cosh(u) - \frac{1}{u^3}\sinh(u)\Bigr)\, 2i\partial E \wedge \bar\partial E.
\end{align*}

Combining equation \ref{MA} with $2\idd E = \Omega$ (Theorem~\ref{egtprop}~(2)), a direct computation yields
\[
\beta^n = \frac{\cosh u \, \sinh^{n-1} u}{u^{\,n-1}} \, \Omega^n.
\]

Next, we compute $\Omega(\xi_i^{1,0},\xi_j^{0,1})$. Since $\idd E = \tfrac{1}{2}\Omega$, the form $\Omega$ has the same symmetry as a Kähler form. Together with $\Omega(\xi_i,\xi_j)=0$ (Theorem~\ref{egtprop}~(3)), we obtain
\[
\Omega(\xi_i^{1,0},\xi_j^{0,1}) = \frac{i}{2}\Omega(\xi_i, J\xi_j).
\]

Recall that at $\phi_\gamma(i)$, one has $\Omega(\xi_i,\eta_i)=1$ and $\Omega(\xi_i,\eta_j)=0$ for $i\ne j$ (Theorem~\ref{egtprop}~(4)). Since the geodesic flow is Hamiltonian, $\Omega$ is invariant under the flow, and by definition, $\Omega$ scales by a factor $\tau$ under the dilation $N_\tau$. Parallel vector fields are also invariant under both operations. Hence, at $\phi_\gamma(\sigma+i\tau)$ we obtain $\Omega(\xi_i,\eta_i)=\tau$ and $\Omega(\xi_i,\eta_j)=0$ for $i\ne j$.

Note that on $\phi_\gamma(\sigma+i \tau)$, $u=\tau$. Using $J\Xi = (H - \Xi A)B^{-1}$ (Equation~\ref{J}) and $\Omega(\xi_i,\xi_j)=0$, we conclude that on the frame $\{\xi_i^{1,0}\}$,
\[
\det(\Omega) = 2^{-n} u^n \det(B^{-1}), \qquad
\det(\beta) = 2^{-n}u \cosh u \, \sinh^{n-1} u \, \det(B^{-1}).
\]

Therefore, on $\phi_\gamma(\sigma+i\tau)$, the function $\psi$ can be expressed as
\begin{equation}
    \psi = \log\Bigl(2^{-n} u \cosh u \, \sinh^{n-1} u \left | \frac{\det(\Phi')}{\det(B)}\right | \Bigr) \label{psiasymp}.
\end{equation}
Finally, the upper bound of 
$\bigl|\det(\Phi')/\det(B)\bigr|
= \bigl|\det(\Phi')/\det(\Im \Phi)\bigr|$
is obtained from the following matrix-valued Schwarz--Pick lemma.

\begin{lem}
Let $\mathbb{C}^+ := \{z \in \mathbb{C} \mid \Im z > 0\}$, and let 
$\Phi : \mathbb{C}^+ \to M_n(\mathbb{C})$
be a symmetric, matrix-valued Herglotz function, i.e., a matrix-valued holomorphic function such that $B := (\Phi - \Phi^*)/(2i)$ is positive definite. Then for $z = \sigma + i\tau$,
\[
\left|\frac{\det(\Phi'(z))}{\det(B(z))}\right| \le \frac{1}{\tau^n}.
\]
\end{lem}

\begin{proof}
By the matrix-valued Herglotz representation \cite{GT00},
\[\Phi(z) = C + Dz + \int_{\mathbb{R}} \bigl( \frac{1}{t - z} - \frac{t}{1+t^2} \bigr)\, d\mu(t),\]
where $C,D$ are Hermitian matrices, $D \ge 0$, and $\mu$ is a positive semidefinite matrix-valued Borel measure. Since $\Phi$ is symmetric, they are real symmetric. Hence, we have the following.
\[\Phi'(z) = D + \int_{\mathbb{R}} \frac{1}{(t - z)^2}\, d\mu(t) \qquad B(z) = \tau D + \int_{\mathbb{R}} \frac{\tau}{|t - z|^2}\, d\mu(t).\]

For any $v \in \mathbb{C}^n$,
\[
|v^* \Phi'(z) v|
\le |v^* D v| + \int_{\mathbb{R}} \frac{1}{|t - z|^2}\, d(v^* \mu(t) v)
= \frac{1}{\tau} \, v^* B(z) v.
\]

Setting $v = B^{-1/2} w$ gives
$|w^* B^{-1/2} \Phi'(z) B^{-1/2} w| \le \frac{1}{\tau} \|w\|^2$,
so the absolute values of all eigenvalues of $B^{-1/2} \Phi'(z) B^{-1/2}$ are bounded by $1/\tau$. 
Therefore,
\[
\left|\det (B^{-1/2} \Phi'(z) B^{-1/2})\right |=\left|\frac{\det(\Phi'(z))}{\det(B(z))}\right| \le \frac{1}{\tau^n}.
\]
\end{proof}
On the real locus of Construction~\ref{setting}, the Jacobi field systems $\Xi$ and $H$ have vanishing Wronskian. Hence the transition matrix $A$ relating them is symmetric. Consequently, its extension $\Phi$ is also symmetric, and by \cite[Lemma~6.7]{LS91}, $B=\Im \Phi$ is positive definite. Therefore the above lemma applies. 

Since both the bound $1/\tau^n = 1/u^n$ and Equation~\ref{psiasymp} are independent of the choice of leaf $\phi_\gamma$, it follows that $\psi$ is bounded above linearly in $\phi\sim u$.
\end{proof}

\section{Tube singularity $S$ and Theorem \ref{egt}}\label{egtdisc}

We conclude this paper with a discussion of Theorem \ref{egt} and possible directions for future improvement. We begin by introducing the following definition.

\begin{defn}\label{ts}
Let $(M,g)$ be a Riemannian manifold, and let $T^rM$ be its Grauert tube equipped with the complexified metric $\gC$. The pole set
\[
S \subset T^rM \setminus M
\]
of $\det(\gC)$ is called the \emph{tube singularity}.
\end{defn}

The tube singularity $S$ is a natural analytic subset of a Grauert tube: it depends only on the underlying metric $g$. In addition, for $\psi := -\log \|\det(\gC)\|_{\beta}$, note that the tube singularity $S$ coincides with the set
\[
S := \{x \mid e^{-2\psi} \notin L^1_\loc(x)\}
\]
appearing in Theorem \ref{main}. Indeed, since $e^{-2\psi} = \|\det(\gC)\|_{\beta}^2$ and $\det(\gC)$ is locally represented by a meromorphic function, its pole set is precisely the set where $\|\det(\gC)\|_{\beta}^2$ fails to be locally integrable.

Theorem \ref{egt} shows that an entire Grauert tube $TM$ is affine outside a codimension-one subset of the form $b_p^{-1}(0)$. Moreover, $TM\setminus S$ is covered by the affine open sets $TM\setminus b_p^{-1}(0)$, which are all birational to one another. Since the tube singularity $S$ is locally given as the zero locus of a holomorphic function at each point, namely by $(\det \gC)^{-1} = 0$, the complement $TM \setminus S$ is Stein \cite[Corollary 4]{Z21}. Therefore, Theorem \ref{egt} can be roughly rephrased as follows.

\medskip
\noindent\textbf{Theorem~\ref{egt} (rephrased)}
\textit{For an entire Grauert tube $TM$ with tube singularity $S$, the complement $TM\setminus S$ is a Stein manifold covered by birationally equivalent affine Zariski open subsets.}\medskip

However, each hypersurface $b_p^{-1}(0)$ necessarily contains the entire tube singularity $S$. In addition, Theorem \ref{egt} is obtained from the first step (i) of Theorem \ref{main}; hence, to apply the additional statements of that theorem, one needs sufficient control over $S$. Therefore, a complete resolution of the Burns conjecture depends on understanding the structure of the tube singularity $S$.

Indeed, if $S$ is empty, then by Theorem \ref{main} (iv), the entire Grauert tube is affine. However, there exist explicit examples where $S$ is nonempty, which constitutes the main obstruction to the Burns conjecture. Moreover, if the $(n,0)$-part $V^{n,0}$ of the analytic continuation of the volume form $V$ of the real locus $(M,g)$ exists, then one must have
\[
\|V^{n,0} \wedge \overline{V^{n,0}}\|_\beta = C_n\|\det(\gC)\|_\beta,
\]
for a positive dimension-dependent constant $C_n$. Therefore, the presence of $S$ also reflects the difficulty encountered in the approach of Aguilar and Burns \cite{AB00}. We first describe such an example and then suggest a known control on $S$.

Sz\H oke \cite[Section 4]{S91} constructed a family of surfaces of revolution $M_d$ that all admit entire Grauert tubes, and proved that, up to rescaling, these exhaust all such surfaces of revolution.

\begin{exmp}
For $d\geq 0$, let $C_d = (\chi_1,0,\chi_2) \subset \bR^3$ be a curve defined by
\[
\chi_1(s) := \frac{\sin s}{\sqrt{d \sin^2 s + 1}}, \qquad (\chi_1')^2 + (\chi_2')^2 = 1, \qquad -\chi_2(0) = \chi_2(\pi).
\]
Let $M_d$ be the surface of revolution obtained by rotating $C_d$ around the third coordinate axis.
\end{exmp}

As observed in Proposition \ref{leafwise}, on each complexified leaf $\phi_\gamma$, the tube singularity $S$ corresponds to the zero set of $\det(\Phi')$, where $\Phi$ is the transition matrix between parallel vector fields. The above example provides an explicit case where $\Phi'$ degenerates outside the real locus when $d \neq 0$.

Recall that on a surface of revolution, meridian curves are geodesics. Sz\H oke showed the following fact \cite[Proposition 4.2]{S91}.

\begin{prop}
Let $\xi$ be the unit normal vector field along the meridian curve $C_d$ in $M_d$. Then
\[
X_1(s) := \frac{\sin s}{\sqrt{d \sin^2 s + 1}} \, \xi(s), \qquad
X_2(s) := \frac{ds \sin s - \cos s}{\sqrt{d \sin^2 s + 1}} \, \xi(s)
\]
are Jacobi fields.
\end{prop}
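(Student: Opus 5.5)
Since the claim concerns only the normal part of the Jacobi equation along the meridian, I would reduce it to a scalar ODE and verify both candidates directly. Along a unit-speed geodesic on a surface, a normal field $f(s)\xi(s)$, with $\xi$ the unit normal (parallel along the geodesic, since $\dim M_d=2$), is a Jacobi field if and only if
\[
f''(s)+K(s)f(s)=0,
\]
where $K$ is the Gaussian curvature at $C_d(s)$. So the plan is: (1) compute $K$ along the meridian; (2) check that $f_1=\chi_1$ and $f_2=(ds\sin s-\cos s)/\sqrt{d\sin^2 s+1}$ solve this scalar equation.

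For step (1), the surface is obtained by rotating a unit-speed profile curve $(\chi_1(s),\chi_2(s))$, with $\chi_1$ the distance to the axis. The standard formula for a surface of revolution then gives $K=-\chi_1''/\chi_1$. This immediately yields that $X_1=\chi_1\xi$ is a Jacobi field: $\chi_1''+K\chi_1=0$ holds tautologically. One should still check smoothness at the poles $s=0,\pi$, where $\chi_1=0$; there $K$ extends analytically, since $\chi_1$ is odd about these points with $\chi_1'(0)=1$.

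For step (2), instead of substituting $f_2$ directly, I would use reduction of order via the constancy of the Wronskian (as in the proof of Proposition \ref{leafwise}). A second solution is $f_2=f_1\int ds/f_1^2$, up to adding a multiple of $f_1$. With $f_1=\sin s/\sqrt{d\sin^2s+1}$ we have
\[
\frac{1}{f_1^2}=d+\frac{1}{\sin^2 s},
\]
so the integral is $ds-\cot s$. Hence
\[
f_2=\frac{\sin s}{\sqrt{d\sin^2 s+1}}\,(ds-\cot s)=\frac{ds\sin s-\cos s}{\sqrt{d\sin^2s+1}},
\]
which is exactly $X_2$. As a check, the Wronskian is $f_1f_2'-f_1'f_2=1$. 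The formula obtained on $(0,\pi)$ extends analytically across $s=0,\pi$, and solutions of a linear ODE with analytic coefficients continue uniquely, so $f_2$ is a global Jacobi coefficient.

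The main obstacle is only bookkeeping. First, one must confirm that the parallel unit normal $\xi$ along the meridian is the rotational direction, so the Jacobi equation really is scalar with coefficient $K$. Second, one must handle the apparent singularities of $1/f_1^2$ at the poles. I would treat the second point by the analytic-continuation argument above rather than by direct computation.
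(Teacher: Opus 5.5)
Your proof is correct. The paper gives no argument of its own here: the statement is quoted from Sz\H{o}ke \cite[Proposition~4.2]{S91}, so your write-up is a self-contained verification in place of a citation. Both steps check out. With a unit-speed profile, $K=-\chi_1''/\chi_1$ makes $f_1=\chi_1$ a solution of $f''+Kf=0$. Reduction of order with $1/f_1^2=d+1/\sin^2 s$ then gives exactly $f_2=f_1\,(ds-\cot s)$, with $f_1f_2'-f_1'f_2=1$.

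The pole bookkeeping can be shortened in two ways. First, $X_1$ is just the rotational Killing field $-y\,\partial_x+x\,\partial_y$ restricted to the closed geodesic $M_d\cap\{y=0\}$. There it equals $x\,\partial_y=\chi_1(s)\,\xi(s)$, with $\chi_1$ read as the signed $x$-coordinate and $\xi=\partial_y$, so $X_1$ is Jacobi along the whole geodesic at once. Second, $X_2$ needs no analyticity: reduction of order applies on every interval $(k\pi,(k+1)\pi)$, and $f_2''+Kf_2$ is continuous on $\mathbb{R}$.

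What your route buys beyond the citation is the normalization. You have $X_1(0)=0$, $X_1'(0)=\xi(0)$, $X_2(0)=-\xi(0)$ and $X_2'(0)=0$. So with $e_1=\xi(0)$, one has $X_1=\eta_1$ and $X_2=-\xi_1$ in Construction~\ref{setting}. The constant Wronskian then gives immediately
\[
\Bigl(\frac{f_1}{f_2}\Bigr)'(s)=-\frac{1}{f_2(s)^{2}}=-\frac{1+d\sin^2 s}{(ds\sin s-\cos s)^2}.
\]
After analytic continuation, this explains the paper's later claim. The zeros of $\Phi'_\perp$, which equals this expression up to sign, are exactly the zeros of $1+d\sin^2 z$, namely $z=n\pi\pm i\,\mathrm{arcsinh}(1/\sqrt d)$.
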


Interpreting this in the framework of Construction \ref{setting} and Remark \ref{n-1}, note that since the base manifold is two-dimensional, the derivative of the transition matrix between two normal Jacobi field systems $\Phi'_\perp$ reduces to the ratio of two Jacobi fields. In this case, $X_1$ corresponds to $\eta_1$ and $X_2$ to $\xi_1$, so that the $1 \times 1$ matrix $\Phi'_\perp$ is given by
\[
\Phi'_\perp(z) = \left( \frac{\sin z}{dz \sin z - \cos z} \right)'.
\]
A direct computation shows that $\Phi'_\perp$ vanishes at
\[
z = n\pi \pm i \, \mathrm{arcsinh}\!\left(\frac{1}{\sqrt{d}}\right), \qquad n \in \bZ.
\]
Thus, except for the round sphere $M_0$, every surface in the family $M_d$ has a nonempty tube singularity $S$.

Lempert \cite[Proposition 3.1, Lemma 3.1, Lemma 4.1]{L93} showed that the absence of zeros of $\Phi'$ near $M$ can be partially controlled by the geometry of $(M,g)$, providing an estimate on the region where $S$ does not appear.

\begin{prop}
Let $\rho$ be the conjugate radius of $(M,g)$, i.e., the minimal distance between distinct conjugate points along any geodesic. Then
\[
T^{\rho/2}M \cap S = \emptyset.
\]
\end{prop}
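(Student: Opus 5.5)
The plan is to reduce the statement to a one-variable problem on each complexified leaf and then use the real geometry of Jacobi fields. By Proposition~\ref{leafwise} and Remark~\ref{n-1}, on a leaf $\phi_\gamma$ the tube singularity $S$ is the zero locus of $\det(\Phi')$. Here $\Phi$ is the holomorphic continuation of the transition matrix $A$ of Construction~\ref{setting}, and
\[
\gC(\Xi^{1,0},\Xi^{1,0})=(\Phi')^{-1}.
\]
On the leaf we have $u=|\tau|$. Since $\phi_\gamma$ is invariant under the geodesic flow and $S$ is intrinsic (Proposition~\ref{leafwise} shows independence of the choices), it therefore suffices to prove the following for every unit-speed geodesic $\gamma$ and every base point $\sigma_0$:
\[
\det\Phi'(\sigma_0+i\tau)\neq 0 \quad\text{and}\quad \det\Phi'(\sigma_0+i\tau)\neq\infty \qquad\text{for } |\tau|<\rho/2,
\]
where $\Phi$ is built from Jacobi fields normalized at $\sigma_0$. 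By shifting the parameter we may take $\sigma_0=0$. This is the point of the shift invariance: we only ever need to control $\Phi$ on the vertical segment over the normalization point.

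Step 1 is to rewrite everything in a parallel orthonormal frame along $\gamma$. There the Jacobi equation reads $Y''+R(\sigma)Y=0$ with $R$ real-analytic and symmetric, and
\[
\Xi(0)=I,\quad \Xi'(0)=0,\qquad H(0)=0,\quad H'(0)=I .
\]
The Wronskian computation in Proposition~\ref{leafwise} gives $A=\Xi^{-1}H$ and $A'=(\Xi^T\Xi)^{-1}$ on the real locus. Consequently, after continuation,
\[
\det\Phi'=1/\det\bigl(\Xi(z)^T\Xi(z)\bigr).
\]
So zeros of $\det\Phi'$ come exactly from singularities (poles) of the continued Gram matrix $\Xi^T\Xi$, and zeros of $\Xi^T\Xi$ would give poles of $\Phi'$. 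Step 2 is to locate these on the real line. There $A=\Xi^{-1}H$ is a real symmetric matrix function with $A'>0$. Its poles occur exactly where $\Xi$ degenerates, i.e.\ at focal points of the initial condition $\Xi(0)=I,\ \Xi'(0)=0$. Between consecutive poles, and in particular at $\sigma=0$ where $A(0)=0$, a Sturm-type comparison with the conjugate-point condition shows that the nearest pole of $A$ (equivalently, degeneracy of $\Xi$) is at distance $\ge\rho/2$ from $0$. Roughly, two focal points of a point-normalized family bracket a pair of conjugate points.

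Step 3, which is the main obstacle, is to pass from this real information to the complex disc $\{|z|<\rho/2\}$. Here I would follow Lempert \cite{L93}. Introduce the Riccati variable $U=\Xi'\Xi^{-1}$, which satisfies
\[
U'+U^2+R=0,\qquad U(0)=0 .
\]
Its poles are exactly the points where $\Xi$ degenerates. The approach is to use the fact that $A$, hence $\Phi$, is a symmetric Herglotz-type function on the strip: by \cite[Lemma~6.7]{LS91}, $\Im\Phi>0$ for $\tau>0$. Together with the real pole spacing from Step 2, this yields that $\Phi$, and hence $\Xi^{-1}H$, is holomorphic and $\Phi'$ is invertible on the disc of radius $\rho/2$ about $0$. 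Concretely, I would show that $\Phi$ restricted to this disc is a Möbius-type (matrix linear fractional) image of a Herglotz function whose derivative cannot degenerate, via the representation used in the Schwarz--Pick lemma above. If that route fails, the fallback is Lempert's direct estimate on the radius of convergence of the Riccati solution in terms of the conjugate radius. Once $\det\Phi'\neq 0,\infty$ on $\{|\tau|<\rho/2\}$ over every normalization point, the identity
\[
\det\gC=1/\det\Phi'
\]
shows that $\det\gC$ has no pole in $T^{\rho/2}M\setminus M$. Since $\gC=g$ is finite and nondegenerate on $M$, this gives $T^{\rho/2}M\cap S=\emptyset$.
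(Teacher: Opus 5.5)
The paper gives no argument of its own here; it quotes Lempert \cite{L93}. So your proposal has to stand on its own, and it breaks at Step~2. Your reduction to vertical segments over a normalization point is fine, because off $\bR$ the zero set of $\det\Phi'$ is the intrinsic pole set of $\det\gC$.

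The failure is the claim that the family normalized by $\Xi(0)=I$, $\Xi'(0)=0$ has no focal point in $(-\rho/2,\rho/2)$. This is false in general. Take the scalar model $y''+r(\sigma)y=0$ with $r=\mathbf{1}_{[0,\pi/2]}$, or a smoothing of it. The solution with $y(0)=1$, $y'(0)=0$ is $\cos\sigma$ on $[0,\pi/2]$, so it vanishes at $\pi/2$. Yet the only solutions with two zeros are, up to scale, $\sigma+d$ for $\sigma\le 0$, which equal $d\cos\sigma+\sin\sigma>0$ on $[0,\pi/2]$ and vanish again at $\pi/2+1/d$. Their zeros are $d+\pi/2+1/d\ge 2+\pi/2$ apart, so $\rho/2=1+\pi/4>\pi/2$.

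Your ``two focal points bracket a conjugate pair'' heuristic is really the even-reflection argument. That needs $R(-\sigma)=R(\sigma)$, which holds at the poles of $M_d$ but not in general. So your $\Phi$ can have a real pole inside the disc of radius $\rho/2$. The Riccati fallback fails for the same reason, since $U=\Xi'\Xi^{-1}$ has a pole at that focal point. That bookkeeping also tracks degeneracy of $\Xi$, which gives poles of $\Phi'$. Zeros of $\det\Phi'=1/\det\Xi^2$ are instead where the continued $\Xi$ blows up; in the $M_d$ example this is where $\sqrt{1+d\sin^2 z}$ vanishes.

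Step~3 is only asserted. The Herglotz property alone does not exclude critical points: $2z/(1-z^2)=\frac{1}{1-z}+\frac{1}{-1-z}$ is Herglotz and its derivative vanishes at $z=i$. What excludes them is a quantitative use of the Herglotz representation, which you do not carry out.

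The repair is to normalize by a Lagrangian family adapted to $\rho$. Fix $\sigma_0$, set $a:=\sigma_0-\rho/2$, and take $\tilde\Xi:=H_a$, $\tilde H:=-\Xi_a$, the standard pair at $a$ swapped.
\begin{itemize}
\item This pair is still Lagrangian with Wronskian $I$. Its transition function is $\tilde\Phi=-\Phi_a^{-1}$, again symmetric Herglotz.
\item Its real poles are the focal points of $\{Y: Y(a)=0\}$. By the definition of $\rho$ these lie in $\{a\}\cup\{t:|t-a|\ge\rho\}$, so $|t_k-\sigma_0|\ge\rho/2$ for every pole $t_k$.
\item Since $\tilde\Phi$ is meromorphic across $\bR$ and real at regular points, its Herglotz measure is atomic at the $t_k$. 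Hence $\tilde\Phi'(z)=D+\sum_k M_k(t_k-z)^{-2}$ with $D$ and each $M_k$ real positive semidefinite.
\item At $z=\sigma_0+i\tau$ with $|\tau|<\rho/2$, one has $\Re\,(t_k-z)^{-2}=\bigl((t_k-\sigma_0)^2-\tau^2\bigr)/|t_k-z|^4>0$.
\item Also $\tilde\Phi'(\sigma_0)\succ0$ forces $\ker D\cap\bigcap_k\ker M_k=0$. Therefore $\Re\tilde\Phi'(z)\succ0$, so $\tilde\Phi'(z)$ is invertible.
\item Writing $\tilde\Xi=\Xi P+HQ$, one gets $\det\tilde\Phi'=\det\Phi'/\det(P+\Phi Q)^2$. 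The denominator does not vanish off $\bR$, because $\tilde\Xi$ spans a Lagrangian space and $\Im\Phi\succ0$.
\end{itemize}
Thus $\det\Phi'(\sigma_0+i\tau)\neq0$ for $|\tau|<\rho/2$, and your reduction then finishes the proof.
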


However, to the best of the author's knowledge, there is currently no known method to control the topology of $S$ sufficiently to apply the global argument in Theorem \ref{main}. Therefore, in view of the Burns conjecture and its applications to the good complexification conjecture, we propose the following questions.

\begin{ques}
Let $TM$ be an entire Grauert tube with tube singularity $S$. Can one guarantee that $\dim H^{2k}(TM \setminus S; \bR) < \infty$ for all $k \in \bZ$?
\end{ques}
As mentioned above, $TM\setminus S$ is a Stein manifold. Therefore, if the above question admits an affirmative answer, then Theorem~\ref{main}(iii) would imply the following global affineness statement for entire Grauert tubes: there exists an irreducible normal affine variety $X$ and an algebraic hypersurface $H \subset X$ such that $TM \setminus S$ is biholomorphic to $X \setminus H$.
\begin{ques}
Let $(M,g)$ be a compact Riemannian manifold admitting an entire Grauert tube. Does there exist a deformation of the metric, preserving the existence of the entire Grauert tube, such that the resulting tube singularity $S$ vanishes?
\end{ques}
Note that the entire Grauert tube $TM$ has the same cohomology as the compact Riemannian manifold $(M,g)$. Therefore, if the tube singularity set $S$ vanishes, then $TM$ is affine by Theorem~\ref{main}(iv). Consequently, if the above question admits an affirmative answer, we obtain that any compact manifold $(M,g)$ admitting an entire Grauert tube has a good complexification.
\bibliography{KSong}
\bibliographystyle{plain}

\vskip 3mm
\noindent
Department of Mathematics, Rutgers University, Piscataway, NJ 08854-8019.

\noindent
{\it Email:} ks1951@rutgers.edu

\end{document}